\documentclass[ejs]{imsart}

\RequirePackage[OT1]{fontenc}
\RequirePackage{amsthm,amsmath}
\RequirePackage[numbers]{natbib}
\RequirePackage[colorlinks,citecolor=blue,urlcolor=blue]{hyperref}

\usepackage{amssymb}
\usepackage[T1]{fontenc}
\usepackage[latin1]{inputenc}
\usepackage[english]{babel}
\usepackage{amsfonts}
\usepackage{enumitem}
\usepackage{amsmath}
\usepackage{amsthm}
\usepackage{booktabs}
\usepackage[normalem]{ulem}
\usepackage{rotating}
\usepackage{amssymb}
\usepackage{tabularx,ragged2e,booktabs,caption}
\usepackage{geometry}
\usepackage{multirow}
\bibliographystyle{acm}
\usepackage{subcaption}
\usepackage{stmaryrd}


\startlocaldefs
\numberwithin{equation}{section}
\theoremstyle{plain}

\def\R{\mathbb{R}}

\def\N{\mathbb{N}}
\def\P{\mathbb{P}}
\def\E{\mathbb{E}}
\def\L{\mathbb{L}}

\def\R{\mathbb{R}}

\def\Z{\mathbb{Z}}

\def\1{\mbox{I\hspace{-.6em}1}} 

\def\cov{\mbox{Cov}\,}

\def\1{\mbox{\hspace{.2em}I\hspace{-.6em}1}} 

\def\limiteasn{\renewcommand{\arraystretch}{0.5}
    \begin{array}[t]{c}\stackrel{a.s.}{\longrightarrow} \\
        {\scriptstyle
            n\rightarrow+\infty}\end{array}\renewcommand{\arraystretch}{1}}

\def\limiteloin{\renewcommand{\arraystretch}{0.5}
    \begin{array}[t]{c}\stackrel{{\cal L}}{\longrightarrow} \\
        {\scriptstyle
            n\rightarrow+\infty}\end{array}\renewcommand{\arraystretch}{1}}

\DeclareMathOperator{\argmin}{argmin}
\DeclareMathOperator{\argmax}{argmax}
\theoremstyle{plain}
\newtheorem{theo}{Theorem}[section]
\newtheorem{lemma}{Lemma}
\newtheorem{proposition}{Proposition}
\numberwithin{equation}{section}

\theoremstyle{definition}
\newtheorem{remark}{Remark}

\geometry{hmargin=3cm,vmargin=2cm}

\newcommand*\interior[1]{\overset{\mathsf{o}}{#1}}

\def\limiten{\renewcommand{\arraystretch}{0.5}
    \begin{array}[t]{c}
        \stackrel{}{\longrightarrow} \\
        {\scriptstyle n \rightarrow\infty}
    \end{array}\renewcommand{\arraystretch}{1}}

\def\limiteproban{\renewcommand{\arraystretch}{0.5}
    \begin{array}[t]{c}
        \stackrel{a.s.}{\longrightarrow} \\
        {\scriptstyle n \rightarrow\infty}
    \end{array}\renewcommand{\arraystretch}{1}}

\def\limiteproban{\renewcommand{\arraystretch}{0.5}
    \begin{array}[t]{c}
        \stackrel{{\mathcal P}}{\longrightarrow} \\
        {\scriptstyle n \rightarrow\infty}
    \end{array}\renewcommand{\arraystretch}{1}}

\def\limitesur{\renewcommand{\arraystretch}{0.5}
    \begin{array}[t]{c}
        \stackrel{{a.s.}}{\longrightarrow} \\
        {\scriptstyle n \rightarrow\infty}
    \end{array}\renewcommand{\arraystretch}{1}}

\endlocaldefs

\begin{document}

    \begin{frontmatter}
        \title{Consistent model selection criteria and goodness-of-fit test  for affine causal processes}
        \runtitle{Consistent model selection criteria and goodness-of-fit test  for affine causal processes}

\begin{aug}
\author{\fnms{Jean-Marc} \snm{Bardet}{}}
\and
\author{\fnms{Kare} \snm{Kamila}\thanksref{t1}\ead[label=e1]{Jean-Marc.Bardet@univ-paris1.fr and kamilakare@gmail.com}}

\address{S.A.M.M., Université Paris 1, Panthéon-Sorbonne,\\
    90, rue de Tolbiac, 75634, Paris, France\\
\printead{e1}}

\author{\fnms{William} \snm{Kengne}
\ead[label=e2]{william.kengne@gmail.com}}

\address{THEMA, Université de Cergy-Pontoise, FRANCE.\\
\printead{e2}}

\thankstext{t1}{Developed within the European Union's Horizon 2020 research
and innovation programme under the Marie Sklodowska-Curie grant agreement No 754362.}
\thankstext{t2}{Developed within the ANR BREAKRISK : ANR-17-CE26-0001-01.}
\runauthor{Bardet et al.}
\end{aug}
\begin{abstract}
%
 This paper studies the model selection problem in a large class of causal time series models, which includes both the ARMA or AR($\infty$) processes, as well as the
 GARCH or ARCH($\infty$), APARCH, ARMA-GARCH and many others processes.
 To tackle this issue, we consider a  penalized contrast based on the quasi-likelihood of the model.
 We provide sufficient conditions for the penalty term to ensure the consistency of the proposed procedure as well as the consistency and the asymptotic normality of the
 quasi-maximum likelihood estimator of the chosen model.
 It appears from these conditions that the Bayesian Information Criterion (BIC) does not always guarantee the consistency.
  We also propose a  tool for diagnosing the goodness-of-fit of the chosen model based on the portmanteau Test.
  Numerical simulations and an illustrative example on the FTSE index are performed to highlight the obtained asymptotic results, including a numerical evidence of the non consistency of the usual BIC penalty for order selection of an  AR$(p)$ models with ARCH($\infty$) errors.
\end{abstract}


\begin{keyword}[class=MSC]
    \kwd[Primary ]{60K35}
    \kwd{60K35}
    \kwd[; secondary ]{60K35}
\end{keyword}

\begin{keyword}
    \kwd{model selection}
    \kwd{affine causal processes}
    \kwd{consistency}
    \kwd{BIC}
    \kwd{Portmanteau Test}
\end{keyword}

\end{frontmatter}

\section{Introduction}

Model selection is an important tool for statisticians and all those who process data. This issue has received considerable attention in the recent literature.
There are several model selection procedures, the main ones are : cross validation and penalized contrast based.

 \medskip

\noindent The cross validation (\cite{stone}, \cite{allen}) consists in splitting the data into learning sample, which will be used for computing estimators of the parameters and the test sample which allows to assess these estimators by evaluate their risks.

\medskip

\noindent The procedures using penalized objective function search for a model, minimizing a trade-off between a  sum of an empirical risk (for instance least squares, $-2\times$log-likelihood), which indicates how well the model fits the data, and a measure of model's complexity so-called a penalty.
\\
The idea of penalizing  dates back to the 1970s with the works of \cite{mallows} and \cite{akaike}. By using the ordinary least squares in regression framework, Mallows obtained the $C_p$ criterion. Meanwhile, Akaike derived AIC for density estimation using log-likelihood contrast. A few years later, following Akaike,  \cite{schwarz} proposed an alternative approach to density estimation  and derived the Bayesian Information Criteria (BIC).
The penalty term of these criteria is proportional to the dimension of the model. In the recent decades, different approaches of penalization have emerged such
as the $\L^2$ norm for the Ridge penalisation \cite{hoer}, the $\L^1$ norm  used by \cite{tibshirani} that provides the LASSO procedure and the elastic-net that
mixes the $\L^1$ and $\L^2$ norms \cite{zou}.

Model selection procedures can have two different objectives: \textit{consistency} and \textit{efficiency}. A procedure is said to be consistent if given a family of models, including the "true model", the probability of choosing the correct model approaches one as the sample size tends to infinity. On the other hand, a procedure is efficient when its risk is asymptotically equivalent to the risk of the oracle. In this work, we are interested to construct a consistent procedure for the general class of times series known as \textit{affine causal processes}.


This class of affine causal time series can be defined as follows. Let $\R^\infty$ be the space of sequences of real numbers with a finite number of non zero,
if $M$, $f$ : $\R^\infty \to \R$ are two measurable functions, then an affine causal class is\\
~\\
\textbf{Class} $\mathcal{AC}(M,f):$ A process $X=(X_t)_{t\in \Z}$ belongs to $\mathcal{AC}(M,f)$ if it satisfies:
\begin{equation}
X_t=M\big((X_{t-i})_{i\in \mathbb{N}^*}\big)\, \xi_t+f\big((X_{t-i})_{i\in \mathbb{N}^*}\big) \;\; \mbox{for any}~ t\in \Z;
\label{eq:serie2}
\end{equation}
where $(\xi)_{t\in \Z}$ is a sequence  of zero-mean independent identically distributed random vectors (i.i.d.r.v) satisfying $\mathbb{E}(|\xi_0|^r)<\infty $ for some
 $r\geq 2$ and $\E[\xi_0^2]=1$.

 \medskip

 \noindent For instance,
\begin{itemize}
\item if $M\big((X_{t-i})_{i\in \mathbb{N}^*}\big)=\sigma$ and $f\big((X_{t-i})_{i\in \mathbb{N}^*}\big)=\phi_1X_{t-1}+\cdots+\phi_pX_{t-p}$,
  then $(X_t)_{t \in \Z}$ is an AR$(p)$ process;
\item if $M\big((X_{t-i})_{i\in \mathbb{N}^*}\big)=\sqrt{a_0+a_1X^2_{t-1}+\cdots+a_pX^2_{t-p}}$ and $f\big((X_{t-i})_{i\in \mathbb{N}^*}\big)=0$,
then $(X_t)_{t \in \Z}$ is an ARCH$(p)$ process.
\end{itemize}
Note that, numerous classical  time series models such as  ARMA($p,q$), GARCH($p,q$), ARMA($p,q$)-GARCH($p,q$) (see \cite{ding} and \cite{ling})
or APARCH$(\delta,p, q)$ processes (see \cite{ding}) belongs to $\mathcal{AC}(M,f)$.
 The existence of stationary and ergodic solutions of this class has been studied in \cite{dou} and \cite{barW}.

  \medskip

 We consider a trajectory $(X_1,\ldots,X_n)$ of a stationary affine causal process $\mathcal{AC}(M^*,f^*)$, where $M^*$ and $f^*$ are unknown.
We also consider a finite set ${\cal M}$ of parametric models $m$, which are affine causal time series.
We assume that the "true" model $m^*$ corresponds to $M^*$ and $f^*$.
 The aim is to obtain an estimator $\widehat m$ of $m^*$ and testing the goodness-of-fit of the chosen model.

 \medskip

There already exist several important contributions devoted to the model selection for time series ;
  we refer to the book of \cite{mcQ} and the references therein for an overview on this topic. \\
  As we have pointed above, two properties are often used to evaluate a quality of a model selection procedure : consistency and efficiency.
  The first measure is often used when the true model is included in the collection of  model's candidate ; otherwise, efficiency is the well-defined property.
  In many research in this framework, the main goal is to develop a procedure that fulfills one of these properties.
  So, in some classical linear time series models, the consistency of the BIC procedure has been established, see for instance \cite{Hannan1980} or
  \cite{Tsay1984} ; and the asymptotic efficiency of the AIC has been proved, see, among others, \cite{Shibata1980}, \cite{Hurvich1989} for a corrected version
  of AIC for small samples, \cite{Ing2005}, \cite{Ing2007}, \cite{Ing2012} for the case of infinite order autoregressive model.
  \cite{Shi2002} propose the (consistent) residual information criteria (RIC) for regression model (including regression models with ARMA errors) selection.
  In the framework of nonlinear threshold models, \cite{Kapetanios2001} proved consistency results of a large class of information criteria, whereas \cite{Gao2004} focussed on
  cross-validation type procedure for model selection in a class of semiparametric time series regression model.
  Let us recall that, the time series model selection literature is very extensive and still growing ; we refer to the monograph of \cite{Rao2001}, which provided an excellent
  summary of existing model selection procedure, including the case of time series models as well as the recent review paper of \cite{Ding2018}.

 \medskip

  The adaptive lasso, introduced by \cite{Zou2006} for variable selection in linear  regression models  has been extended by \cite{Ren2010} to vector autoregressive
  models, \cite{Kock2016} carried out this procedure in stationary and nonstationary autoregressive models ; the oracle efficient is established.
  \cite{Lerasle2011} considers model selection for density estimation under mixing conditions and derived oracle inequalities of the slope heuristic procedure (\cite{Birge2007} or \cite{arlot}) ;
  whereas \cite{Alquier2012} develop oracle inequalities for model selection for weakly dependent time series forecasting.
  Recently, \cite{Shao2017} have considered the model selection for ARMA time series with trend, and proved the consistency of BIC for the detrended residual sequence, while
  \cite{Arkoun2018} developed oracle inequalities of sequential model selection method for nonparametric autoregression.
  \cite{Hsu2019} pointed out that most existing model selection procedure cannot simultaneously enjoy consistency and (asymptotic) efficiency. They propose a
  misspecification-resistant information criterion that can achieve consistency and asymptotic efficiency for prediction using model selection. 
~\\

 In this paper, we focus on the class of models (\ref{eq:serie2}), and addressed the following questions :
  \begin{enumerate}
    \item What regularity conditions are sufficient to build a consistent model selection procedure? Does the classic criterion such as BIC, still have consistent property
    for choosing a model among the collections $\mathcal{M}$?
    \item How can we test the goodness-of-fit of the chosen model?
  \end{enumerate}

 These questions have not yet been answered for the class of models and the framework considered here, in particular in case of infinite memory processes.
  This new contribution provides theoretical and numerical response of these issues.

 \medskip
 (i) The estimator $\widehat m$ of $m^*$
 is chosen  by minimizing a penalized criterion $\widehat{C}(m)=-2\widehat{L}_n(m)+ |m|\,\kappa_n$, where $\widehat{L}_n(m)$ is a Gaussian quasi-log-likelihood of the model $m$,
 $|m|$ is the number of estimated parameters of the model $m$ and $\kappa_n$ is a non-decreasing sequence of real numbers (see more details in Section \ref{Def}). Note that, in the cases $\kappa_n=2$ or $\kappa_n=\log n$ we respectively consider the usual AIC and BIC criteria.
 We provide sufficient conditions (essentially depending on the decreasing of the Lipschitz coefficients of the functions $f$ and $M$) for obtaining consistency of the model selection
 procedure.
 We also theoretically and numerically exhibit an example of order selection  (weak AR$(p)$ processes with ARCH($\infty$) errors) such that the consistency of the
 classical BIC penalty is not ensured.

 \medskip
 (ii) We provide an asymptotic goodness-of-fit test for the selected model that is very simple to be used (with the usual Chi-square distribution limit), which successively
completes the model selection procedure. Numerical applications show the accuracy of this test under the null hypothesis as well as an efficient test power under an
alternative hypothesis.
Note that, similar test has been proposed by \cite{li2} under the Gaussian assumption on the observations,  whereas \cite{Ling1997} focused for multivariate time series
with multivariate ARCH-type errors.
Also, \cite{Francq2008} proposed a portmanteau test statistic based on generalized inverses and $\{2 \}$-inverses for diagnostic checking in the class of model (\ref{eq:serie2}).
Unlike these authors, we apply the test to a model obtained from a model selection procedure. 

 \medskip



~\\
The paper is organized as follows. Some definitions, notations and assumptions are described in Section \ref{Def}. The consistency of the criteria and the asymptotic normality of the post-model-selection estimator are studied in Section \ref{Asympto}. In Section \ref{Examples}, the examples of $AR(\infty)$, $ARCH(\infty)$, $APARCH(\delta,p,q)$ and ARMA$(p,q)$-GARCH$(p',q')$ processes are detailed. The goodness-of-fit test is presented in Section \ref{Test}. Finally, numerical results are presented in Section \ref{Numeric} and Section \ref{Proofs} contains the proofs.

\section{Definitions and Assumptions}\label{Def}
%
Let us introduce some definitions and assumptions in order to facilitate the presentation.
\subsection{Notation and assumptions}
In the sequel, we will consider a subset $\Theta$  of $ \mathbb{R}^{d}$ ($d \in \N$). We will use the following norms:
\begin{itemize}
\item $\|.\|$  denotes the usual Euclidean norm on $\R^\nu$, with $\nu\geq 1$;
\item if $X$ is $\mathbb{R}^\nu$-random variable with $r\ge 1$ order moment, we set $\|X\|_r=\big (\E(\|X\|^r\big)^{1/r}$;
\item for any set $\Theta \subseteq\mathbb{R}^d$ and for any $g: \Theta \to \R^{d'}$, $d'\geq 1$, denote $\|g\|_{\Theta}=\underset {\theta \in \Theta}{\sup} \big \{ \|g(\theta)\|\big \}$.
\end{itemize}
In the introduction, to be more concise, we have presented the problem of time series model selection in a very general form. In reality, we will limit our field of study a little bit by considering a semi-parametric framework.
Hence, let $(f_{\theta})_{\theta \in \Theta}$ and $(M_{\theta})_{\theta \in \Theta}$ be two families of known functions such as for any $\theta \in \Theta$, both $f_{\theta}, M_{\theta}$  with real values defined on $\R^\infty$.

 \medskip

\noindent We begin by giving a condition on $f_{\theta}$ and  $M_{\theta}$ which ensure the existence of a $r$-order moment, stationary and ergodic time series belonging
to $\mathcal{AC}(M_\theta,f_\theta)$. This condition, initially  obtained in \cite{dou}, is written in terms of Lipschitz coefficients of both these functions. Hence, for $\Psi_{\theta}=f_{\theta}$ or  $M_{\theta}$, define:

\medskip

\noindent \textbf{Assumption A}$(\Psi_{\theta},\Theta)$: {\it Assume that $\|\Psi_{\theta}(0)\|_\Theta < \infty$ and there exists a sequence of non-negative real numbers $\big(\alpha_k(\Psi_{\theta},\Theta)\big)_{k\ge 1}$ such that $\sum_{k=1}^{\infty}\alpha_k(\Psi_{\theta},\Theta)< \infty$ satisfying:
\[
\|\Psi_{\theta}(x)-\Psi_{\theta}(y)\|_\Theta \le \sum_{k=1}^{\infty}\alpha_k(\Psi_{\theta},\Theta)|x_k-y_k| \; for \; all\; x,y \in \mathbb{R}^\mathbb{\infty}.
\]}

\medskip

\noindent Now for $r\geq 1$, where $\|\xi_0\|_r<\infty$, define:
\begin{multline}
\Theta(r)=\Big \{\theta  \in \R^d,~ A(f_{\theta},\{\theta\})\; \textnormal{and}\; A(M_{\theta},\{\theta\})\; \textnormal{hold with} \\
 \sum_{k=1}^{\infty} \alpha_k(f_{\theta},\{\theta\}) +\|\xi_0\|_r \, \sum_{k=1}^{\infty} \alpha_k(M_{\theta},\{\theta\}) < 1 \Big \}.
\end{multline}
Then, for any $\theta \in \Theta(r)$, there exists a stationary and ergodic solution with $r$-order moment belonging to $\mathcal{AC}(M_\theta,f_\theta)$.
 (see \cite{dou} and \cite{barW}).

\medskip
\subsection{The framework}
Let us start with an example to better understand the framework and the approach of model selection we will follow.\\
~\\
{\bf Example:} Assume that the observed trajectory $(X_1,\ldots,X_n)$ is generated from an AR$(2)$ process and we would like to identify this family of process and its order. Then, we consider the collection $\cal{M}$ of ARMA$(p,q)$ and GARCH$(p',q')$ processes for $0\leq p,q,p',q'\leq 9$ and we would like to chose in this family a "best" model for fitting $(X_1,\ldots,X_n)$. Note that there is $200$ possible models and we expect to recognize the AR$(2)$ as the selected model, at least when $n$ is large enough.

\medskip

We begin with the following property that allow to enlarge the family of models by extending the dimension $d$ of the parameter $\theta$:
\begin{proposition}\label{prop0}
Let $d_1,d_2 \in \N$, $\Theta_1 \subset \mathbb{R}^{d_1}$ and $\Theta_2 \subset \mathbb{R}^{d_2}$, and for $i=1,2$, define $f_{\theta_i}^{(i)}, M_{\theta_i}^{(i)} :\R^\infty \to \R$ and  for $\theta_i\in \Theta_i$. Then there exist $\max(d_1,d_2)\leq d\leq d_1+d_2$, $\Theta \subset \R^d$, and a family of functions $f_{\theta}:\R^\infty \to \R$ and $M_{\theta}:\R^\infty \to [0,\infty)$ with $\theta\in \Theta$,
such that for any $\theta_1\in \Theta_1$ and $\theta_2\in \Theta_2$, there exists $\theta\in \Theta$ satisfying
\[
\mathcal{AC}\big (M_{\theta_1}^{(1)},f_{\theta_1}^{(1)}\big)\bigcup \mathcal{AC}\big(M_{\theta_2}^{(2)},f_{\theta_2}^{(2)}\big) \subset \mathcal{AC}\big(M_{\theta},f_{\theta}\big).
\]
\end{proposition}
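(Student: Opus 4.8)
The plan is to realise the enlarged family by concatenating the two parameters, $\theta=(\theta_1,\theta_2)$, so that $\Theta=\Theta_1\times\Theta_2\subset\R^{d_1+d_2}$; identifying any coordinates that the two parametrisations happen to share brings the dimension down and accounts for the stated range $\max(d_1,d_2)\le d\le d_1+d_2$. The real issue is to build a \emph{single} pair $(M_\theta,f_\theta)$ whose class contains, at once, the processes produced by both building blocks. I would first record the obstruction that rules out a naive combination: if $X\in\mathcal{AC}(M,f)$, then writing $\xi_t=\big(X_t-f(\text{past})\big)/M(\text{past})$ and using that $\xi_t$ is independent of the past with unit variance gives $f(\text{past})=\E[X_t\mid\text{past}]$ and $M(\text{past})^2=\operatorname{Var}(X_t\mid\text{past})$. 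Hence $(M_\theta,f_\theta)$ cannot be a genuine third function: it is forced to coincide with $(M^{(1)}_{\theta_1},f^{(1)}_{\theta_1})$ wherever a class-$1$ trajectory lives and with $(M^{(2)}_{\theta_2},f^{(2)}_{\theta_2})$ wherever a class-$2$ trajectory lives. The construction must therefore separate these two regions of $\R^\infty$.

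To do this I would pass to the stationary ergodic solutions, which are the processes the selection procedure actually uses and whose existence is provided by the result quoted just after the definition of $\Theta(r)$. Let $X^{(i)}$ be the stationary ergodic solution of $\mathcal{AC}(M^{(i)}_{\theta_i},f^{(i)}_{\theta_i})$ and let $\nu_i$ be the law on $\R^\infty$ of its past $(X^{(i)}_{-k})_{k\ge1}$. If $\nu_1=\nu_2$ the two building blocks already agree $\nu_1$-almost everywhere on the common support, and taking $(M_\theta,f_\theta)=(M^{(1)}_{\theta_1},f^{(1)}_{\theta_1})$ settles the claim. The substantial case is $\nu_1\neq\nu_2$. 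Here the key point is that the past process $\big((X^{(i)}_{-k})_{k\ge1},S\big)$, with $S$ the coordinate shift, is itself stationary and ergodic, and two distinct stationary ergodic laws are mutually singular: picking a bounded measurable $g$ with $\E_{\nu_1}[g]\neq\E_{\nu_2}[g]$, Birkhoff's theorem shows that the set
\[
A=\Big\{x\in\R^\infty:\ \tfrac1n\textstyle\sum_{j=0}^{n-1}g(S^jx)\longrightarrow \E_{\nu_1}[g]\Big\}
\]
satisfies $\nu_1(A)=1$ and $\nu_2(A)=0$.

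With such a separating set I would define, for $\theta=(\theta_1,\theta_2)$,
\[
M_\theta=M^{(1)}_{\theta_1}\mathbf{1}_A+M^{(2)}_{\theta_2}\mathbf{1}_{A^c},\qquad
f_\theta=f^{(1)}_{\theta_1}\mathbf{1}_A+f^{(2)}_{\theta_2}\mathbf{1}_{A^c},
\]
which are measurable, with $M_\theta\ge0$ since each block is a non-negative scale function (one may replace a signed $M^{(i)}$ by $|M^{(i)}|$, which is harmless when $\xi_0$ is symmetric). The verification is then immediate: for every $t$ the past of $X^{(1)}$ lies in $A$ with probability one, so on that event the defining recursion of $\mathcal{AC}(M_\theta,f_\theta)$ collapses to the class-$1$ recursion, which holds by hypothesis; hence $X^{(1)}\in\mathcal{AC}(M_\theta,f_\theta)$, and symmetrically $X^{(2)}\in\mathcal{AC}(M_\theta,f_\theta)$ through $A^c$. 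This gives the required inclusion of the union in one class.

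The main obstacle is exactly the mutual-singularity step: everything else is bookkeeping, but that step is what makes the single-$\theta$ statement true rather than merely the disjoint-union one, and it rests on the ergodicity supplied by the quoted existence theorem together with the distinctness of the two stationary laws. Two loose ends I would tie off afterwards. First, to reach every element of the full classes, and not only the distinguished stationary ergodic solutions, one takes $A$ to be a shift-invariant set defined through the same trajectory statistic, so that any ergodic solution of law $\nu_1$ falls in $A$ and any of law $\nu_2$ in $A^c$; restricting attention to the stationary ergodic solutions, which is precisely the scope of the model-selection set-up, sidesteps this point entirely. Second, the dimension count: concatenating $\theta_1,\theta_2$ and identifying shared coordinates places $d$ in the asserted interval $[\max(d_1,d_2),\,d_1+d_2]$.
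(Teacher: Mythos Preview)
Your approach is genuinely different from the paper's, and the difference is instructive. The paper builds the enlarged family \emph{additively}: writing $f^{(i)}_{\theta_i}=h_\alpha+\ell^{(i)}_{\alpha'_i}$ and $M^{(i)}_{\theta_i}=R_\beta+m^{(i)}_{\beta'_i}$ with $\ell^{(i)}_0=m^{(i)}_0=0$, it sets $\theta=(\alpha,\alpha'_1,\alpha'_2,\beta,\beta'_1,\beta'_2)$, $f_\theta=h_\alpha+\ell^{(1)}_{\alpha'_1}+\ell^{(2)}_{\alpha'_2}$ and $M_\theta=R_\beta+m^{(1)}_{\beta'_1}+m^{(2)}_{\beta'_2}$. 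Then $\alpha'_2=\beta'_2=0$ recovers class~1 and $\alpha'_1=\beta'_1=0$ recovers class~2. This is exactly the AR/ARCH example written below the proposition, and it is what the subsequent model-selection framework actually needs: a common \emph{parametric family} in which every model under comparison sits as a coordinate subspace.

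You correctly observe that this does \emph{not} give a single $\theta$ for which $\mathcal{AC}(M^{(1)}_{\theta_1},f^{(1)}_{\theta_1})\cup\mathcal{AC}(M^{(2)}_{\theta_2},f^{(2)}_{\theta_2})\subset\mathcal{AC}(M_\theta,f_\theta)$: your conditional-mean/variance argument shows this is impossible with a ``smooth'' $(M_\theta,f_\theta)$ unless the two processes already coincide. The paper's proof in fact establishes the weaker (and operationally relevant) statement that each class embeds for \emph{some} $\theta$, and the displayed union should be read in that spirit. Your indicator construction via mutual singularity of ergodic measures does achieve the literal inclusion for stationary ergodic solutions, which is a nice piece of ergodic theory; the price is that the resulting $M_\theta,f_\theta$ are discontinuous and will not satisfy the Lipschitz Assumption~A that the rest of the paper relies on, and that the argument does not cover arbitrary (non-stationary) elements of the classes. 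So: your route is more faithful to the letter of the statement, the paper's is simpler, preserves the regularity needed downstream, and matches the intended use.
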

\noindent The proof of this proposition, as well as the other proofs, can be found in Section \ref{Proofs}.
This proposition says that it is always possible to embed two parametric causal affine models in a larger one. Hence, for instance, we can consider as well AR processes and ARCH processes in a unique representation, {\it i.e.}
\begin{multline*}
\left \{ \begin{array}{ll}
AR&\left\{\begin{array}{l}
M_{\theta_1}^{(1)}\big((X_{t-i})_{i\in \mathbb{N}^*}\big)=\sigma \\
f_{\theta_1}^{(1)}\big((X_{t-i})_{i\in \mathbb{N}^*}\big)=\phi_1X_{t-1}+\cdots+\phi_pX_{t-p}
\end{array}\right .
\\
 \\
ARCH&\left\{\begin{array}{l}
M_{\theta_2}^{(2)}\big((X_{t-i})_{i\in \mathbb{N}^*}\big)=\sqrt{a_0+a_1X^2_{t-1}+\cdots+a_qX^2_{t-q}} \\
f_{\theta_2}^{(2)}\big((X_{t-i})_{i\in \mathbb{N}^*}\big)=0
\end{array}\right .
\end{array}  \right .\\
\Longrightarrow \left\{
\begin{array}{l}
M_{\theta}\big((X_{t-i})_{i\in \mathbb{N}^*}\big)=\sqrt{\theta_0+\theta_1X^2_{t-1}+\cdots+\theta_qX^2_{t-q}} \\
f_{\theta}\big((X_{t-i})_{i\in \mathbb{N}^*}\big)=\theta_{q+1}X_{t-1}+\cdots+\theta_{q+p}X_{t-p}
\end{array} \right . .
\end{multline*}
From now and in all the sequel, we fix $d \in \N^*$, and the family of functions $f_{\theta},M_{\theta}:\R^\infty \to \R$
for $\theta\in \Theta \subset \Theta(r) \subset \R^d$.

\medskip

Let $(X_1,\ldots, X_n)$ be an observed trajectory of an affine causal process $X$ belonging to $\mathcal{AC}(M_{\theta^*},f_{\theta^*})$, where $\theta^*$ is an unknown vector of $\Theta$, and therefore:
\begin{equation}
X_t=M_{\theta^*}\big((X_{t-i})_{i\in \mathbb{N}^*}\big)\, \xi_t+f_{\theta^*}\big((X_{t-i})_{i\in \mathbb{N}^*}\big) \;\; \mbox{for any}~ t\in \Z.
\label{eq:serie}
\end{equation}
In the sequel, we will consider  several models, which all are particular cases of $\mathcal{AC}(M_{\theta},f_{\theta})$ with $\theta \in \Theta \subset \R^d$. More precisely define:
\begin{itemize}
\item a model $m$ as a subset of $\{1,\ldots,d\}$ and denote $|m|=\#(m)$;
\item $\Theta(m)=\big \{ (\theta_i)_{1\leq i \leq d}\in \R^d,~\theta_i=0~\mbox{if $i\notin m$} \big \}\cap \Theta$;
\item ${\cal M}$ as a family of models, {\it i.e.} ${\cal M}\subset {\cal P} \big ( \{1,\ldots,d\}\big )$.
\end{itemize}
Finally, for all $m\in {\cal M}$, $m \in \mathcal{AC}(M_{\theta},f_{\theta})$ when $\theta \in \Theta(m)$ and denote $m^*$ the "true" model.
We could as well consider hierarchical or exhaustive families of models. \\
~\\
{\bf Example:} From the previous example, we can consider:\\
$\bullet$ a family ${\cal M}_1$ of models $m_1$ such as ${\cal M}_1=\big \{\{1\},\{1,2\},\ldots,\{1,\ldots,q+1\} \big \}$: this family is the hierarchical one of ARCH processes with orders varying from $0$ to $q$. \\
$\bullet$ a family ${\cal M}_2$ of models $m_2$ such as ${\cal M}_2= {\cal P} \big ( \{1,\ldots,p+q+1\}\big )$: this family is the exhaustive one and contains as well the AR$(2)$ process $X_t=\phi_2 X_{t-2}+ \theta_0 \, \xi_t$ as the process  $X_t=\phi_1X_{t-1}+\phi_3 X_{t-3}+  \xi_t \,\sqrt{ \theta_0 +a_2 X_{t-2}^2 }$.\\
~\\
To establish the consistency of the selected model, we will need to assume that the "true" model $m^*$ with the parameter $\theta^*$, is included in the model family ${\cal M}$.
\subsection{The special case of NLARCH$(\infty)$ processes}
As in \cite{barW}, in the special case of NLARCH$(\infty)$ processes, including for instance GARCH$(p,q)$ or ARCH$(\infty)$ processes, a particular treatment can be realized for obtaining sharper results than using the previous framework. In such case, define the class:\\
~\\
\textbf{Class} $\widetilde{\mathcal{AC}}(\widetilde H_\theta)$: A process $X=(X_t)_{t\in \Z}$ belongs to $\widetilde{\mathcal{AC}}(\widetilde H_\theta)$ if it satisfies:
\begin{equation}
X_t=\xi_t  \, \sqrt {\widetilde H_\theta \big((X^2_{t-i})_{i\in \mathbb{N}^*}\big)}\;\; \mbox{for any}~ t\in \Z.
\label{eq:serie3}
\end{equation}
Therefore, if $ M^2_\theta \big((X_{t-i})_{i\in \mathbb{N}^*}\big)=H_\theta \big((X_{t-i})_{i\in \mathbb{N}^*}\big)=\widetilde H_\theta \big((X^2_{t-i})_{i\in \mathbb{N}^*}\big)$
then, $\widetilde{\mathcal{AC}}(\widetilde H_\theta)={\mathcal{AC}}(M_\theta,0)$.
In case of the class $\widetilde{\mathcal{AC}}(\widetilde H_\theta)$, we will use the  assumption $A(\widetilde H_{\theta},\Theta)$.
By this way, we will obtain a new set of stationary solutions. For $r\geq 2$ define:
\begin{equation}
\widetilde {\Theta}(r)=\Big \{\theta  \in \R^d,~ {A}(\widetilde H_{\theta},\{\theta\})~ \mbox{holds with}~
 \big (\|\xi_0\|_r \big )^2 \, \sum_{k=1}^{\infty} \alpha_k(\widetilde H_{\theta},\{\theta\}) < 1 \Big \}.
\end{equation}
Then, for $\theta \in {\Theta}(r)$, a process $(X_t)_{t\in \Z}$ belonging to the class $\widetilde{\mathcal{AC}}(\widetilde H_\theta)$ is  stationary ergodic and satisfies $\|X_0\|_r<\infty$.
\subsection{The Gaussian quasi-maximum likelihood estimation and the model selection criterion} \label{qmle_msc}
In the sequel, for a model $m \in {\cal M}$, a family of models of $\mathcal{AC}(M_{\theta},f_{\theta})$ with $\theta \in \Theta \subset \R^d$, where $\theta \to M_{\theta}$ and $\theta \to f_{\theta}$ are two fixed functions, we are going to consider Gaussian quasi-maximum likelihood estimators (QMLE) of $\theta$ for each specific model $m$.

\medskip

\noindent This approach as semi-parametric estimation has been successively introduced for GARCH$(p,q)$ processes in \cite{Jeantheau1998} where its consistency is also proved, and the asymptotic normality of this estimator has been established in \cite{Berkes2003} and \cite{Francq2004}. In \cite{barW}, those results have been extended to affine causal processes, and an extension to Laplacian QMLE has been also proposed in \cite{barY}. \\
\noindent The Gaussian QMLE is derived from the conditional (with respect to the filtration $\sigma \big \{(X_{t})_{t\leq 0} \big \}$) log-likelihood of $(X_1,\ldots,X_n)$ when $(\xi_t)$
is supposed to be a Gaussian standard white noise.
Due to the linearity of a causal affine process, we deduce that this conditional log-likelihood (up to an additional constant) $L_n$ is defined for all $\theta \in \Theta$ by:
\begin{equation}
L_n(\theta):=-\frac{1}{2}\sum_{t=1}^n q_t (\theta) ~ , ~ \textnormal{with} \; q_t (\theta):=\frac{(X_t - f_{\theta}^t)^2}{H_{\theta}^t} + \log(H_{\theta}^t)
\label{eq:eq1}
\end{equation}
where $f_{\theta}^t:=f_{\theta}(X_{t-1},X_{t-2},\cdots)$, $M_{\theta}^t:=M_{\theta}(X_{t-1},X_{t-2},\cdots)$ and $H_{\theta}^t=\big (M_{\theta}^t\big )^2$.
Since $L_n(\theta)$ depends on $(X_t)_{t\le 0}$ that are unknown, the idea of the  quasi log-likelihood is to replace $q_t (\theta)$ by an approximation $\widehat{q}_t(\theta)$
and to compute $\widehat{\theta}$ as in equation \eqref{eq:qmle} even if the white noise is not Gaussian.
Hence, the conditional quasi log-likelihood (up to an additional constant) is given for all $\theta \in \Theta$ by
\begin{multline}
\widehat{L}_n(\theta):=-\frac{1}{2}\, \sum_{t=1}^n \widehat{q}_t (\theta) ~ , ~ \textnormal{with} \; \widehat{q}_t (\theta):=\frac{(X_t - \widehat{f}_{\theta}^t)^2}{\widehat{H}_{\theta}^t} + \log(\widehat{H}_{\theta}^t) \\
\mbox{where}
\displaystyle \left \{
\begin{array}{lcl}
\widehat{f}_{\theta}^t&:=&f_{\theta}(X_{t-1},X_{t-2},\cdots,X_1,u)\\
\widehat{M}_{\theta}^t&:=&M_{\theta}(X_{t-1},X_{t-2},\cdots,X_1,u)  \\
\widehat{H}_{\theta}^t&:=&(\widehat{M}_{\theta}^t)^2
\end{array} \right .
\end{multline}
for any deterministic sequence $u=(u_n)$ with finitely many non-zero values ($u =0$ is very often chosen without loss of generality).

\medskip

However, the definitions of the conditional log-likelihood and quasi log-likelihood require that their denominators do not vanish.
Hence, we will suppose in the sequel that the lower bound of $H_{\theta}(\cdot) = \big(M_{\theta}(\cdot) \big)^2$ (which is reached since $\Theta$ is compact) is strictly positive:

\medskip

\noindent \textbf{Assumption D}$(\Theta)$: {\it $\exists \underline{h}>0$ such that $\underset{\theta \in \Theta}{\inf}(H_{\theta}(x)) \ge \underline{h}$ for all $x\in \mathbb{R}^{\infty}$.}

\medskip

\noindent Finally, under this assumption, for each specific model $m\in {\cal M}$, we define the Gaussian QMLE   $\widehat{\theta}(m)$ as
\begin{equation}
\widehat{\theta}(m)= \underset{\theta \in \Theta(m)}{\argmax} \;\widehat{L}_n(\theta).
\label{eq:qmle}
\end{equation}
To select the "best" model $m\in {\cal M}$, we chose a penalized contrast $\widehat{C}(m)$ ensuring a trade-off between $-2$ times the maximized quasi log-likelihood, which decreases with the size of the model, and a penalty increasing with the size of the model. Therefore, the choice of the "best" model $\widehat{m}$ among the estimated can be performed by minimizing the following criteria
\begin{equation}
\widehat{m}=  \underset{m \in \mathcal{M}}{\argmin} \;\widehat{C}(m)\quad\textnormal{with}\quad
\widehat{C}(m)=-2\widehat{L}_n\big(\widehat{\theta}(m)\big)+ |m| \,\kappa_n,
\label{eq:cri}
\end{equation}
where
\begin{itemize}
\item $(\kappa_n)_n$ an increasing sequence depending on the number of observations $n$.
\item $|m|$ denotes the dimension of the model $m$, {\it i.e.} the cardinal of $m$, subset of $\{1,\ldots,d\}$, which is also the number of estimated components of $\theta$ (the others are fixed to zero).
\end{itemize}
The consistency of the criterion  $\widehat{C}$, {\it i.e.}
\begin{equation}
\mathbb{P}(\widehat{m}=m^*)\limiten 1 ;
\label{eq:cons}
\end{equation}
will be established after showing that both of following probabilities are zero:
\begin{itemize}
\item the asymptotic probability of selecting a larger model containing the true model (overfitting case);
\item the asymptotic probability of selecting a false model that is a model not containing $m^*$.
\end{itemize}

\section{Asymptotic results}\label{Asympto}
\label{sec:3}
\subsection{Assumptions required for the asymptotic study}
%
%
The following classical assumption ensures the identifiability of the model considered.

\medskip

\noindent \textbf{Assumption Id}$(\Theta)$: {\it  For all $\theta, \, \theta' \in \Theta$,
 $(f_{\theta}^0=f_{\theta'}^0 \; and\;  M_{\theta}^0=M_{\theta'}^0 ) ~ a.s. \implies \theta=\theta'. $ }

 \medskip

 \noindent Another required assumption concerns the differentiability of $\Psi_{\theta}=f_{\theta}$ or  $M_{\theta}$ on $\Theta$. This type of assumption has already been considered in order to apply the QMLE procedure (see \cite{barW}, \cite{strau}, \cite{white}). First, the following Assumption Var$(\Theta)$ provides the invertibility of the "Fisher's information matrix" of $X$ and is important to prove the asymptotic normality of the QMLE.

 \medskip

\noindent \textbf{Assumption Var}: {\it $\big (\sum_{i=1}^d \alpha_i \, \frac {\partial f_{\theta}^0} {\partial \theta^{(i)}} =0~\implies ~\forall i=1,\ldots,d, ~\alpha_i=0~a.s\big )$ or $\big (\sum_{i=1}^d \alpha_i \, \frac {\partial H_{\theta}^0} {\partial \theta^{(i)}} =0~\implies ~\forall i=1,\ldots,d, ~\alpha_i=0~a.s\big )$.}

\medskip

 \noindent Moreover,  one of the following technical assumption is required to establish the consistency of the model selection procedure.

\medskip

\noindent \textbf{Assumption $\boldsymbol{K}(\Theta)$}: {\it Assumptions $A(f_{\theta},\Theta), A(M_{\theta},\Theta)$, $A(\partial _\theta f_{\theta},\Theta)$,
$A(\partial _\theta M_{\theta},\Theta)$ and $B(\Theta)$ hold and there exists $r\ge 2$ such that $\theta^* \in \Theta(r)$. Moreover, with $s=\min(1,r/3)$, assume that the sequence $(\kappa_n)_{n\in \mathbb{N}}$ satisfies
\[
\sum_{k\ge 1}(\frac{1}{\kappa_k})^s\Big(\sum_{j \ge k} \alpha_j (f_\theta,\Theta)+ \alpha_j (M_\theta,\Theta)+\alpha_j (\partial _\theta f_\theta,\Theta)+\alpha_j (\partial _\theta M_\theta,\Theta)\Big)^s < \infty.
\] }

\medskip
\noindent \textbf{Assumption $\widetilde{\boldsymbol{K}}(\Theta)$}: {\it Assumptions $A(\widetilde H_{\theta},\Theta)$, $  A(\partial _\theta \widetilde  H_{\theta},\Theta)$ and $B(\Theta)$ hold and there exists $r\ge 2$ such that $\theta^* \in \Theta(r)$. Moreover, with $s=\min(1,r/4)$, assume that the sequence $(\kappa_n)_{n\in \mathbb{N}}$ satisfies
\[
\sum_{k\ge 1}(\frac{1}{\kappa_k})^s\Big(\sum_{j \ge k} \alpha_j (\widetilde H_\theta,\Theta)+ \alpha_j (\partial _\theta \widetilde H_\theta,\Theta)\Big)^s < \infty.
\] }
%
%
\begin{remark} \label{rem:1}
These conditions on $(\kappa_n)_{n\in \mathbb{N}}$ have been deduced from conditions for strong law of large numbers obtained in \cite{kounias} and are not too restrictive:
for instance, if the Lipschitzian coefficients of $f_{\theta}$, $M_{\theta}$ (the case using $\widetilde H_{\theta}$ can be treated similarly) and their derivatives
are bounded by a geometric or Riemanian decrease:
\begin{enumerate}
\item the geometric case: $\alpha_j (f_\theta,\Theta)+ \alpha_j (M_\theta,\Theta)+\alpha_j (\partial _\theta f_\theta,\Theta)+\alpha_j (\partial _\theta M_\theta,\Theta)=O(a^j)$ with $0\le  a < 1$, then any $(\kappa_n)$ such as $1/\kappa_n=o(1)$ can be chosen; for instance  $\kappa_n=\log {n}$ or $\log(\log{n})$;
this is the case for instance of ARMA, GARCH, APARCH or ARMA-GARCH processes.
\item the Riemanian case: $\alpha_j (f_\theta,\Theta)+ \alpha_j (M_\theta,\Theta)+\alpha_j (\partial _\theta f_\theta,\Theta)+\alpha_j (\partial _\theta M_\theta,\Theta)=O(j^{-\gamma})$ with $\gamma> 1$:
\begin{itemize}
\item if $r \ge 3$ then
\begin{itemize}
\item if  $\gamma >2$ then any sequence such as  $1/\kappa_n=o(1)$ can be chosen;
\item if $1 <\gamma <2$, any $(\kappa_n)$ such as $\kappa_n= O(n^{\delta})$ with $\delta>2-\gamma$ can be chosen.
\end{itemize}
\item if $1 \leq r< 3$
\begin{itemize}
\item if  $\gamma >(r+3)/r$ then any sequence such as  $1/\kappa_n=o(1)$ can be chosen;
\item if $1 <\gamma <(r+3)/r$  then any $(\kappa_n)$ such as $\kappa_n= n^{\delta}$ with $\delta>(r+3)/r-\gamma$ can be chosen.
\end{itemize}
In the last case of these two conditions on $r$, we can see the usual BIC choice, $\kappa_n=\log n$ does not fulfill the assumption in general.
\end{itemize}
\end{enumerate}
\end{remark}
\subsection{New versions of limit theorems in \cite{barW}}
\noindent These assumptions K$(\Theta)$  and $\widetilde K(\Theta)$ used in Lemmas \ref{lem1} and \ref{lem2} (see Section \ref{Proofs}) and the detailed Riemanian convergence rates of the previous remark, provide an improvement of the two main limit theorems established in \cite{barW}. More precisely, we obtain: \\
~\\
{\bf New version of Theorem 1 in \cite{barW}} \\
{\it Let $(X_1,\ldots, X_n)$ be an observed trajectory of an affine causal process $X$ belonging to $\mathcal{AC}(M_{\theta^*},f_{\theta^*})$ (or $\widetilde {\mathcal{AC}}(\widetilde H_\theta)$) where $\theta^*$ is an unknown vector of $\Theta$, a compact set included in $\Theta(r) \subset \R^d$ (or $\widetilde  \Theta(r) \subset \R^d$) with \textcolor{black}{$r\geq 2$}. Then, if   assumptions A($f_\theta,\Theta$), A($M_\theta,\Theta$) (or A($\widetilde H_\theta,\Theta$)), $D(\Theta)$, $Id(\Theta)$  hold with }
\begin{equation}\label{condP}
\left \{ \begin{array}{ll} \alpha_j (f_\theta,\Theta)+ \alpha_j (M_\theta,\Theta)=O\big (j^{-\ell}\big ) & \mbox{for some}~~\ell
>\max (1 \, , \, 3/r) \\
\mbox{or}\quad\alpha_j (\widetilde H_\theta,\Theta)=O\big (j^{-\widetilde \ell}\big ) & \mbox{for some}~~\widetilde \ell
>\max (1 \, , \, 4/r)
\end{array} \right .,
\end{equation}
{\it then the QMLE $\widehat \theta(m^*)$ satisfies
$
\widehat \theta(m^*) \limiteasn \theta^*.
$ }
~\\
\begin{proof}
We use the same proof as in \cite{barW} except for establishing $\frac{1}{n} \,\big \|\widehat{L}_n(\theta)- L_n(\theta)\big \|_{\Theta}\limiteasn 0$.
Indeed, we can apply Lemma \ref{lem1} with $\kappa_n=n$. Hence, this is checked under assumption $\boldsymbol{K}(\Theta)$ under Riemanian condition of Remark \ref{rem:1} if $r\geq 3$ when $\gamma=\ell>1$ and \textcolor{black}{if $2 \leq r \leq 3$, when $\gamma=\ell>3/r$}, implying the first new conditions of the Theorem. \\
Under assumption $\widetilde{\boldsymbol{K}}(\Theta)$, an adaptation of Remark \ref{rem:1} implies that for $r\geq 4$ we should have $\gamma=\widetilde \ell>1$ and if \textcolor{black}{$2 \leq r \leq 4$}, when $\gamma=\widetilde \ell>4/r$.
\end{proof}
\noindent Therefore, in all the previous cases and when $r=4$, we obtain a limiting decrease rate $O\big (j^{-\gamma}\big )$ with $\gamma>1$ instead of  $\gamma>3/2$ in \cite{barW}. This can also be used to improve Theorem 2 in \cite{barW}:\\
~\\
{\bf New version of Theorem 2 in \cite{barW}} \\
{\it If $r\geq 4$ and under the assumptions of the previous new version of Theorem 1 in \cite{barW}, and {\bf Var($\Theta$)}, and if   assumptions A($\partial _\theta f_\theta,\Theta$), A($\partial _\theta M_\theta,\Theta$), A($\partial^2 _{\theta^2} f_\theta,\Theta$) and A($\partial^2 _{\theta^2} M_\theta,\Theta$) (or A($\partial _\theta\widetilde H_\theta,\Theta$) and A($\partial^2 _{\theta^2} \widetilde H_\theta,\Theta$) )
hold with }
\begin{equation}\label{condth2}
\left \{ \begin{array}{c}  \alpha_j(\partial _\theta f_\theta,\Theta)+\alpha_j(\partial _\theta  M_\theta,\Theta)=O\big (j^{-\ell'}\big
) \\ \quad\mbox{or}\quad \alpha_j(\partial _\theta \widetilde H_\theta,\Theta)=O\big (j^{-\ell'}\big
)
\end{array}\right . ~~\mbox{for some}~~\ell' >1,
\end{equation}
{\it  then the QMLE $\widehat \theta_n(m^*)$ satisfies}
\begin{eqnarray}\label{tlcqmle}
\sqrt{n} \, \Big (\big (\widehat\theta(m^*)\big )_i-(\theta^*)_i\Big )_{i \in m^*}\limiteloin {\cal
N}_{|m^*|}\big (0 \ , \ F(\theta^*,m^*)^{-1} G(\theta^*,m^*)
F(\theta^*,m^*)^{-1}\big ),
\end{eqnarray}
{\it with $\displaystyle \big (F(\theta^*,m^*)\big )_{i,j}=\mathbb{E}\Big[\frac{\partial^2 q_0(\theta^*)}{\partial \theta_i \partial \theta_j}\Big]$   and $\displaystyle (G(\theta^*,m^*))_{i,j}=\mathbb{E}\Big[\frac{\partial q_0(\theta^*)}{\partial \theta_i} \frac{\partial q_0(\theta^*)}{\partial \theta_j} \Big]$ for $i,\, j \in m^*$.}\
\subsection{Asymptotic model selection}
\noindent Using the above assumptions, we can establish the limit theorem below, which provides sufficient conditions for the consistency of the model selection procedure.
\begin{theo}\label{theo:1}
Let $(X_1,\ldots, X_n)$ be an observed trajectory of an affine causal process $X$ belonging to $\mathcal{AC}(M_{\theta^*},f_{\theta^*})$
(or $\widetilde {\mathcal{AC}}(\widetilde H_\theta)$) where $\theta^*$ is an unknown vector of $\Theta$ a compact set included in
$\Theta(r) \subset \R^d$ (or $\widetilde  \Theta(r) \subset \R^d$) with $r\geq 4$. If assumptions $D(\Theta)$, $Id(\Theta)$, $K(\Theta)$ (or $\widetilde K(\Theta)$),
A($\partial^2 _{\theta^2} f_\theta,\Theta$) and  A($\partial^2 _{\theta^2} M_\theta,\Theta$) (or  A($\partial^2 _{\theta^2} \widetilde H_\theta,\Theta$)) also hold, then
\begin{equation}
\P(\widehat{m}=m^*)\limiten 1 \quad \mbox{and}\quad
\widehat\theta(\widehat{m}) \limiteproban \theta^*.
\label{eq:F}
\end{equation}
\end{theo}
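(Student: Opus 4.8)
The plan is to exploit the finiteness of $\mathcal{M}$ and reduce the claim to a pairwise comparison: $\P(\widehat m=m^*)\to 1$ will follow once I show that for every $m\in\mathcal{M}$ with $m\neq m^*$ one has $\P\big(\widehat{C}(m)>\widehat{C}(m^*)\big)\to 1$, and then apply the union bound over the finitely many competitors. I would split these competitors into two families treated by genuinely different arguments: the \emph{underfitting} models, for which $m^*\not\subset m$, and the \emph{overfitting} models, for which $m\supsetneq m^*$. Writing $\widehat{C}(m)-\widehat{C}(m^*)=-2\big[\widehat{L}_n(\widehat\theta(m))-\widehat{L}_n(\widehat\theta(m^*))\big]+(|m|-|m^*|)\kappa_n$, the two families differ in which of the two terms provides the dominating, sign-correct contribution, recalling that the admissible $(\kappa_n)$ satisfy both $\kappa_n\to\infty$ and $\kappa_n=o(n)$.

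For the underfitting family I would work at the scale $1/n$. Introducing the limit contrast $L(\theta):=-\tfrac12\,\E[q_0(\theta)]$, the uniform strong law of large numbers together with Lemma~\ref{lem1} applied with $\kappa_n=n$ (exactly as in the new version of Theorem~1) gives $\tfrac1n\widehat{L}_n(\theta)\to L(\theta)$ uniformly on $\Theta$ almost surely, whence $\tfrac1n\widehat{L}_n(\widehat\theta(m))=\sup_{\theta\in\Theta(m)}\tfrac1n\widehat{L}_n(\theta)\to\sup_{\theta\in\Theta(m)}L(\theta)=:L(\theta^*_m)$ and $\tfrac1n\widehat{L}_n(\widehat\theta(m^*))\to L(\theta^*)$. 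Since $m^*\not\subset m$ forces $\theta^*\notin\Theta(m)$, Assumption $Id(\Theta)$ (uniqueness of the maximizer of $L$) together with the compactness of $\Theta(m)$ yields the strict gap $L(\theta^*)-L(\theta^*_m)>0$. Hence $\tfrac1n\big(\widehat{C}(m)-\widehat{C}(m^*)\big)\to 2\big(L(\theta^*)-L(\theta^*_m)\big)>0$ because the penalty contribution $(|m|-|m^*|)\tfrac{\kappa_n}{n}$ is negligible when $\kappa_n=o(n)$, so $\widehat{C}(m)-\widehat{C}(m^*)\to+\infty$ almost surely.

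For the overfitting family the likelihood difference no longer grows linearly, so the penalty must carry the argument, and this is where I expect the main obstacle. Here $\theta^*\in\Theta(m^*)\subset\Theta(m)$ and both $\widehat\theta(m)$ and $\widehat\theta(m^*)$ are consistent for $\theta^*$, and I would show that $2\big[\widehat{L}_n(\widehat\theta(m))-\widehat{L}_n(\widehat\theta(m^*))\big]=O_P(1)$. This is precisely where $r\ge 4$ and Assumptions A($\partial^2_{\theta^2}f_\theta,\Theta$), A($\partial^2_{\theta^2}M_\theta,\Theta$) are needed: a second-order Taylor expansion of $\widehat{L}_n$ about $\theta^*$, with the remainder controlled uniformly through Lemma~\ref{lem1}, reduces each of $\widehat{L}_n(\widehat\theta(m))-\widehat{L}_n(\theta^*)$ and $\widehat{L}_n(\widehat\theta(m^*))-\widehat{L}_n(\theta^*)$ to a quadratic form in $\sqrt n(\widehat\theta(\cdot)-\theta^*)$ against $\tfrac1n\partial^2_{\theta^2}\widehat{L}_n(\theta^*)$. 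By the new version of Theorem~2 these rescaled estimators are $O_P(1)$ and the Hessian converges to $F(\theta^*,\cdot)$, so the whole difference is $O_P(1)$, being asymptotically a quadratic form in an asymptotically Gaussian score (of chi-square type). Consequently $\widehat{C}(m)-\widehat{C}(m^*)=-O_P(1)+(|m|-|m^*|)\kappa_n$ with $|m|-|m^*|\ge 1$ and $\kappa_n\to\infty$, which tends to $+\infty$ in probability.

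Combining the two families with the union bound over the finite set $\mathcal{M}$ establishes $\P(\widehat m=m^*)\limiten 1$. The convergence $\widehat\theta(\widehat m)\limiteproban\theta^*$ then follows at once: on the event $\{\widehat m=m^*\}$, whose probability tends to $1$, one has $\widehat\theta(\widehat m)=\widehat\theta(m^*)$, while $\widehat\theta(m^*)\limiteasn\theta^*$ by the new version of Theorem~1, and convergence in probability is obtained by restricting to this high-probability event. The delicate point remains the uniform $O_P(1)$ control of the overfitting likelihood ratio, since it is there that the moment condition $r\ge 4$, the second-derivative Lipschitz assumptions, and the $\kappa_n$-summability of Assumption $K(\Theta)$ (entering through Lemma~\ref{lem1}) are used simultaneously; the analogous statements with $\widetilde K(\Theta)$ and A($\partial^2_{\theta^2}\widetilde H_\theta,\Theta$) handle the NLARCH$(\infty)$ case verbatim.
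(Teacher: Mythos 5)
Your architecture is the same as the paper's: a union bound over the finite family $\mathcal{M}$, a split into misspecified models ($m^*\not\subset m$) treated at scale $1/n$ via the limit contrast $L$ and an identifiability gap, overfitted models ($m\supsetneq m^*$) treated via the penalty, and the final conditioning on $\{\widehat m=m^*\}$ for the consistency of $\widehat\theta(\widehat m)$. Two remarks on the misspecified case: it matches the paper's proof (Proposition \ref{prop:1} plus Lemma \ref{lem1} with $\kappa_n=n$), except that you derive the strict gap $L(\theta^*)-\sup_{\theta\in\Theta(m)}L(\theta)>0$ directly from $Id(\Theta)$; $Id(\Theta)$ is only a statement about parameters, and turning it into "$\theta^*$ is the unique maximizer of $L$" requires the conditional-expectation/Jensen computation that the paper carries out explicitly (the quantity $A(m)$), or a citation of the consistency proof in \cite{barW}. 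This is a gap in self-containedness rather than a wrong step.

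The genuine flaw is in the overfitting case: the claim $\widehat{L}_n(\widehat\theta(m))-\widehat{L}_n(\widehat\theta(m^*))=O_P(1)$ is not justified by the available tools and, under the theorem's hypotheses, is false in general. What is $O_P(1)$ is the difference of the \emph{unobservable} likelihoods, $L_n(\widehat\theta(m))-L_n(\theta^*)$ (the paper's Lemma \ref{lem:5}, i.e.\ the chi-square-type quadratic form you describe). Passing to the computable $\widehat{L}_n$ costs an additional $2\,\|\widehat{L}_n-L_n\|_{\Theta}$, and Lemma \ref{lem1} controls this term only as $o(\kappa_n)$ almost surely, not as $O_P(1)$: with $r\geq 4$ one has $\E\|\widehat q_t-q_t\|_{\Theta}\le C\sum_{j\ge t}\alpha_j$, so the bound on $\E\|\widehat{L}_n-L_n\|_{\Theta}$ behaves like $\sum_{t\le n}\sum_{j\ge t}\alpha_j$, which diverges (at rate $n^{2-\gamma}$) in the Riemannian regime $\alpha_j\asymp j^{-\gamma}$, $1<\gamma<2$. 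This regime is exactly the one the theorem is designed to cover: if your $O_P(1)$ claim held, then \emph{any} $\kappa_n\to\infty$, in particular the BIC penalty $\log n$, would prevent overfitting, contradicting the central message of the paper and its Section 6 experiment (Model 5), where BIC keeps overfitting. The correct intermediate statement is the paper's Lemma \ref{lem3}: $\widehat{L}_n(\widehat\theta(m))-\widehat{L}_n(\theta^*)=o_P(\kappa_n)$, obtained by combining Lemma \ref{lem1} (this is where Assumption $K(\Theta)$ ties $\kappa_n$ to the decay of the Lipschitz coefficients) with the $O_P(1)$ Taylor bound on the $L_n$-difference. With this correction your conclusion survives unchanged, since $\widehat{C}(m)-\widehat{C}(m^*)=(|m|-|m^*|)\,\kappa_n-o_P(\kappa_n)-O_P(1)\to+\infty$ in probability when $|m|-|m^*|\ge 1$; but as written, the step conflates the two likelihoods at precisely the point where the paper's whole analysis lives.
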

\noindent The following theorem shows the  asymptotic normality of the QMLE of the chosen model.
\begin{theo}\label{theo:2}
Under the assumptions of Theorem \ref{theo:1} and if $\theta^* \in \interior{\Theta}$ and {\bf Var($\Theta$)} hold, then
\begin{eqnarray}
\sqrt{n} \, \Big (\big (\widehat\theta(\widehat{m})\big )_i-(\theta^*)_i\Big )_{i \in m^*}\limiteloin {\cal
N}_{|m^*|}\big (0 \ , \ F(\theta^*,m^*)^{-1} G(\theta^*,m^*)
F(\theta^*,m^*)^{-1}\big ),
\label{eq:con2}
\end{eqnarray}
where $F$  and $G$ are defined in \eqref{tlcqmle}.
\end{theo}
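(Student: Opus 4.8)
The plan is to reduce the asymptotic behaviour of the post-model-selection estimator $\widehat\theta(\widehat m)$ to that of the estimator $\widehat\theta(m^*)$ computed on the true model, and then to invoke the central limit theorem already available for the latter. The decisive ingredient is the consistency of the selection procedure established in Theorem \ref{theo:1}, namely $\P(\widehat m=m^*)\limiten 1$. The key elementary observation I would record first is that, by construction, on the event $\{\widehat m=m^*\}$ the two estimators maximize the same quasi-log-likelihood $\widehat L_n$ over the same parameter subset $\Theta(m^*)$, so that the exact identity $\widehat\theta(\widehat m)=\widehat\theta(m^*)$ holds there. Consequently the two standardized vectors
\[
V_n=\sqrt{n}\,\Big((\widehat\theta(\widehat m))_i-(\theta^*)_i\Big)_{i\in m^*}
\quad\text{and}\quad
W_n=\sqrt{n}\,\Big((\widehat\theta(m^*))_i-(\theta^*)_i\Big)_{i\in m^*}
\]
coincide on an event whose probability tends to $1$.

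Next I would apply the new version of Theorem 2 in \cite{barW} stated above to $\widehat\theta(m^*)$. Under $r\ge 4$, $\theta^*\in\interior{\Theta}$, {\bf Var}($\Theta$), and the assumptions $A(\partial_\theta f_\theta,\Theta)$, $A(\partial_\theta M_\theta,\Theta)$, $A(\partial^2_{\theta^2} f_\theta,\Theta)$, $A(\partial^2_{\theta^2} M_\theta,\Theta)$ (or their $\widetilde H_\theta$ counterparts), all of which are part of the hypotheses of Theorem \ref{theo:1} together with the two extra assumptions of Theorem \ref{theo:2}, one obtains
\[
W_n\limiteloin {\cal N}_{|m^*|}\big(0,\,F(\theta^*,m^*)^{-1}G(\theta^*,m^*)F(\theta^*,m^*)^{-1}\big).
\]
The only point deserving care here is to verify that the Riemanian decay rates \eqref{condth2} with $\ell'>1$ required by that theorem are indeed implied, in the present setting, by assumption $K(\Theta)$ (respectively $\widetilde K(\Theta)$) together with $A(\partial^2_{\theta^2} f_\theta,\Theta)$ and $A(\partial^2_{\theta^2} M_\theta,\Theta)$; this follows from the discussion of the geometric and Riemanian cases in Remark \ref{rem:1}.

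Finally I would transfer the limit law from $W_n$ to $V_n$ by a standard merging argument. For any continuity set $A$ of the limiting Gaussian law, decomposing the probabilities on $\{\widehat m=m^*\}$ and its complement and using $V_n=W_n$ on the former gives
\[
\big|\,\P(V_n\in A)-\P(W_n\in A)\,\big|
=\big|\,\P(V_n\in A,\widehat m\neq m^*)-\P(W_n\in A,\widehat m\neq m^*)\,\big|
\le \P(\widehat m\neq m^*),
\]
and the right-hand side vanishes as $n\to\infty$ by Theorem \ref{theo:1}. Since $W_n$ converges in distribution to the stated Gaussian limit, it follows that $\P(V_n\in A)$ converges to the same limit for every continuity set $A$, which is exactly the portmanteau characterization of \eqref{eq:con2}. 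The genuine difficulty of the whole statement lies upstream, in the consistency of Theorem \ref{theo:1} and in the CLT for the true-model estimator; once $\P(\widehat m=m^*)\to 1$ is granted, the probabilistic reduction carried out here is immediate, and the residual effort is confined to the bookkeeping check that every regularity and decay hypothesis needed for the CLT on $\widehat\theta(m^*)$ is subsumed by the assumptions of Theorem \ref{theo:2}.
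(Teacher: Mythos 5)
Your proof is correct and follows essentially the same route as the paper: both reduce $\widehat\theta(\widehat m)$ to $\widehat\theta(m^*)$ via the exact coincidence of the two estimators on the event $\{\widehat m = m^*\}$, whose probability tends to one by Theorem \ref{theo:1}, and then invoke the CLT \eqref{tlcqmle} for the true-model QMLE. The paper formalizes the transfer through a total-probability decomposition of the joint distribution function, while you use the portmanteau characterization on continuity sets; this is only a cosmetic difference in bookkeeping, not a different argument.
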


\begin{remark}\label{rem:2}
 In Remark \ref{rem:1}, we detailed some situations where the assumption $K(\Theta)$ (or $\widetilde K(\Theta)$) holds, which leads to the results
 of Theorem \ref{theo:1} and \ref{theo:2}.
 In particular, the $\log n$ penalty usually linked to BIC is consistent in the case of a geometric decrease of the Lipschitz coefficients of the functions $f_{\theta}$ and $M_{\theta}$ (and their first order
 derivative).
 In the case of a Riemanian rate, the consistency of BIC is not ensured; see also the next section.
\end{remark}

\section{Examples}\label{Examples}
In this section, some examples of time series satisfying the conditions of previous results are considered. These examples include $AR(\infty)$, $ARCH(\infty)$, $APARCH(\delta,p,q)$ and ARMA$(p,q)$-GARCH$(p',q')$.

\subsection{AR$(\infty)$ models}
For $(\psi_k(\theta))_{k\in \N}$ a sequence of real numbers depending on $\theta\in \R^d$, let us consider an $AR(\infty)$ process defined by:
\begin{equation}
X_t=\sum_{k\ge 1}\psi_k(\theta^*)X_{t-k}+ \sigma \, \xi_t\quad\mbox{for any $t \in \Z$},
\label{eq:ex1}
\end{equation}
where $(\xi_t)_t$ admits $4$-order moments, and $ \theta^* \in \Theta \subset \Theta(4)$, the set of $\theta \in \R^d$ such that $\sum_{k\ge 1}\|\psi_k(\theta)\|_{\Theta}<1$ and $\sigma>0$.
This process corresponds to \eqref{eq:serie} with $f_{\theta}\big ((x_i)_{i\geq 1}\big )=\sum_{k\ge 1}\psi_k(\theta)x_{k}$ and $M_{\theta} \equiv \sigma$ for any $\theta
 \in \Theta$. The Lipschitz coefficients of $f_{\theta}$ are $\alpha_k(f_{\theta})=\|\psi_k(\theta)\|_{\Theta}$. Moreover, Assumption $D(\Theta)$  holds with $\underline{h}=\sigma^2>0$.

 \medskip

 \noindent Let us consider $\mathcal{M}$ a finite family of models. Of course, the main example of such family of models is given by the one of ARMA$(p,q)$ processes with $0\leq p \leq p_{\max}$ and $0\leq q \leq q_{\max}$, providing $(p_{\max}+1)(q_{\max}+1)$ models and $\theta \in \R^{p_{\max}+q_{\max}+1}$.

 \medskip

 \noindent Besides, assume that $Id(\Theta)$, 
  Var($\Theta$) hold and that the sequence $(\psi_k)$ is twice differentiable (with respect to $\theta$) on $\Theta$, with $\sum_k\|\partial^2_{\theta} \psi_k(\theta)\|_{\Theta}<\infty$ and $\|\psi_k(\theta)\|_{\Theta}+\|\partial_\theta \psi_k(\theta)\|_{\Theta}=O(k^{-\gamma})$ with $\gamma>1$. From Remark \ref{rem:1},
\begin{itemize}
\item if $\gamma>2$, the condition $\kappa_n\limiten \infty$ (for instance, the BIC penalization with $\kappa_n=\log(n)$, or $\kappa_n=\sqrt{n}$) ensures the consistency of $\widehat{m}$ and the Theorem \eqref{theo:2} holds if in addition $\theta^* \in \interior{\Theta}$;
\item if $1< \gamma <2$, $\kappa_n= O(n^{\delta})$ with $\delta>2-\gamma$ has to be chosen (and we cannot insure the consistency of $\widehat{m}$ in case of classical BIC penalization).
\end{itemize}
Finally, in the particular case of the family of ARMA processes, the stationarity condition implies that any $\kappa_n\limiten \infty$ can be chosen (BIC penalization with $\kappa_n=\log(n)$, or $\kappa_n=\sqrt{n}$), since the decreases of $\psi_k$ and its derivative are exponential.

\subsection{ARCH$(\infty)$ models}
For $(\psi_k(\theta))_{k\in \N}$ a sequence of nonnegative real numbers depending on $\theta\in \R^d$, with $\psi_0>0$, let us consider an ARCH$(\infty)$ process
defined by :
\begin{equation}
X_t=\Big(\psi_0(\theta^*)+  \sum_{k=1}^{\infty}\psi_k(\theta^*)X_{t-k}^2\Big)^{1/2}\xi_t \quad\mbox{for any $t \in \Z$},
\label{eq:ex2}
\end{equation}
where $\mathbb{E} \big [\xi^4_0\big ]<\infty$,  and $ \theta^* \in  \Theta \subset \widetilde \Theta(4)$,
the set of $\theta \in \R^d$ such that $\sum_{k\ge 1}\|\psi_k(\theta)\|_{\Theta}<1$. 
This process corresponds to \eqref{eq:serie} with $f_{\theta}\big ((x_i)_{i\geq 1}\big )\equiv 0$ and $H_{\theta} \big ((x_i)_{i\geq 1}\big )= \psi_0(\theta)+  \sum_{k=1}^{\infty}\psi_k(\theta)x_k^2$, {\it i.e.} $\widetilde H_{\theta} \big ((y_i)_{i\geq 1}\big )= \psi_0(\theta)+  \sum_{k=1}^{\infty}\psi_k(\theta)y_k$, for any $\theta\in \Theta$.
The Lipschitz coefficients of $\widetilde H_{\theta} $ are $\alpha_k(\widetilde H_{\theta} )=\|\psi_k(\theta)\|_{\Theta}$. Moreover, Assumption $D(\Theta)$  holds if $\underline{h}=\inf_{\theta\in \Theta}\psi_0(\theta)>0$.

 \medskip

\noindent Let us consider $\mathcal{M}$ a finite family of models. The main example of such family of models is given by the GARCH$(p,q)$ processes with $0\leq p \leq p_{\max}$ and $0\leq q \leq q_{\max}$, providing $(p_{\max}+1)(q_{\max}+1)$ models and $\theta \in \R^{p_{\max}+q_{\max}+1}$.

\medskip

\noindent Moreover, assume that $Id(\Theta)$, Var($\Theta$) hold and that the sequence $(\psi_k)$ is twice differentiable (with respect to $\theta$) on $\Theta$, with $\sum_k\|\partial^2_{\theta} \psi_k(\theta)\|_{\Theta}<\infty$ and $\|\psi_k(\theta)\|_{\Theta}+\|\partial_\theta \psi_k(\theta)\|_{\Theta}=O(k^{-\gamma})$ with $\gamma>1$. From Remark \ref{rem:1},
\begin{itemize}
\item if $\gamma>2$, the condition $\kappa_n\limiten \infty$ (for instance, the BIC penalization with $\kappa_n=\log(n)$, or $\kappa_n=\sqrt{n}$) ensures the consistency of $\widehat{m}$ and the Theorem \eqref{theo:2} holds if in addition, $\theta^* \in \interior{\Theta}$;
\item if $1< \gamma <2$, $\kappa_n= O(n^{\delta})$ with $\delta>2-\gamma$ has to be chosen (and we cannot insure the consistency of $\widehat{m}$ in the case of the classical BIC penalization).
\end{itemize}
Finally, in the particular case of the family of GARCH processes, the stationarity condition  implies that any $\kappa_n\limiten \infty$ can be chosen (BIC penalization with $\kappa_n=\log(n)$, or $\kappa_n=\sqrt{n}$), since the decreases of $\psi_k$ and its derivative are exponential.
\subsection{APARCH$(\delta, p, q)$ models}
For $\delta\geq 1$ and from \cite{ding}, $(X_t)_{t\in \Z}$ is an APARCH$(\delta, p, q)$ process with $ p , q \geq 0$ if:
\begin{equation}
\begin{cases}
           X_t=\sigma_t\, \xi_t \\
          (\sigma_t)^{\delta}=\omega+\sum_{i=1}^{p} \alpha_i(|X_{t-i}|-\gamma_i X_{t-i})^{\delta}+ \sum_{j=1}^{q} \beta_j (\sigma_{t-j})^{\delta}\quad\mbox{for any $t \in \Z$},
            \end{cases}
\label{eq:ex3}
\end{equation}
where $\omega>0$, $ -1<\gamma_i <1$, $\alpha_i\ge 0$, $\beta_j\ge 0$ for $1\leq i \leq p$ and $1\leq j \leq q$, $\alpha_{p}>0$, $\beta_{q}>0$ and $\sum_{j=1}^q \beta_j<1$.
From \cite{barY}, with $\theta=(\omega,\alpha_1,\ldots,\alpha_{p},\gamma_1,\ldots,\gamma_{p},\beta_1,\ldots,\beta_{p})'$, the conditional variance $\sigma_t$ can be rewritten
 as follows
\[
\sigma_t^{\delta}=b_0(\theta)+ \sum_{k\ge 1}\Big(b_k^+(\theta)(\max(X_{t-k},0))^{\delta}-b_k^-(\theta)(\min(X_{t-k},0))^{\delta} \Big);
\]
with $f_\theta \equiv 0$ and $M_\theta^t=\sigma_t$, we deduce that $\alpha_k(M_\theta,\Theta)=\max(\|b_k^+(\theta)\|^{1/\delta}_{\Theta},\|b_k^-(\theta)\|^{1/\delta}_{\Theta})$,
and from the assumption $\sum_{j=1}^q \beta_j<1$, the Lipschitz coefficients $\alpha_k(M_\theta,\Theta)$ decrease exponentially fast. Then, the stationarity set for $r\geq 1$ is
\[
\Theta(r)=\Big \{\theta\in\R^{2p+q+1}~\Big/~  \|\xi_0\|_r \, \sum_{j=1}^\infty \max \big (|b_j^+(\theta)|^{1/\delta},|b_j^-(\theta)|^{1/\delta}\big )< 1\Big \}.
\]
 Now, assume that $(X_t)_{t\in \Z}$ is an APARCH$(\delta, p^*, q^*)$ where $0\leq p^* \leq p_{\max}$ and $0\leq q^* \leq q_{\max}$ are unknown orders as well as the other parameters:  $\omega^*>0$, $ -1<\gamma^*_i <1$, $\alpha^*_i\ge 0$, $\beta^*_j\ge 0$ for $1\leq i \leq p_{\max}$ and $1\leq j \leq q_{\max}$, $\alpha_{p^*}>0$, $\beta_{q^*}>0$.

\medskip

\noindent Let ${\cal M}$ be the family of APARCH$(\delta, p, q)$ processes, with $0\leq p \leq p_{\max}$ and $0\leq q \leq q_{\max}$. As a consequence, we consider here $d=2p_{\max}+q_{\max}+1$, and
\[
\theta^*={}^t \big (\omega^*,\alpha^*_1,\ldots,\alpha^*_{p^*},0,\ldots,0,\gamma^*_1,\ldots,\gamma^*_{p^*},0,\ldots,0,\beta^*_1,\ldots,\beta^*_{q^*},0,\ldots,0\big ) \in \R^d.
\]
With all the previous conditions, assumptions D$(\Theta)$, Id$(\Theta)$, Var($\Theta$) are satisfied. Moreover, since the Lipschitz coefficients decrease exponentially fast,
K$(\Theta)$ is  satisfied when $\kappa_n \to \infty$. Therefore, the consistency Theorem \eqref{theo:1} and the Theorem \eqref{theo:2} of the estimator of the chosen model are satisfied when $r=4$ and $\kappa_n \to \infty$ (for instance with the typical BIC penalty $\kappa_n=\log n$).

\subsection{ARMA$(p,q)$-GARCH$(p',q')$ models}
From \cite{ding} and \cite{ling}, we define  $(X_t)_{t\in \Z}$ as an (invertible) ARMA$(p,q)$-GARCH$(p',q')$ process with $ p, q, p', q' \geq 0$ if:
\[
\begin{cases}
          X_t=\sum_{i=1}^p a_i \, X_{t-i} +\varepsilon_t-\sum_{i=1}^q b_i\, \varepsilon_{t-i}\\
          \varepsilon_t=\sigma_t \, \xi_t, \; \mbox{with}\; \sigma_t^2=c_0+ \sum_{i=1}^{p'}c_i \,\varepsilon_{t-i}^2+\sum_{i=1}^{q'}d_i \, \sigma_{t-i}^2
            \end{cases} \quad \mbox{for all $t\in \Z$},
\]
where
\begin{itemize}
\item $c_0> 0$, $c_{p'}>0$, $c_i \ge 0$ for $i=1,\cdots,p'-1$ and $d_{q'}>0$, $d_i \ge 0$ for $i=1,\cdots,q'-1$;
\item $ P(x)=1-\sum_{i=1}^p a_ix^{i}$ and $ Q(x)=1-\sum_{i=1}^q b_ix^{i}$ are coprime polynomials.
\end{itemize}
Here we will consider the case of a stationary invertible ARMA$(p,q)$-GARCH$(p',q')$ process such as $\|X_0\|_4<\infty$ and therefore we will consider:
\begin{multline*}
\Theta_{p,q,p',q'}=\Big \{ (a_1,\ldots,d_{q'})\in \R^{p+q+p'+1+q'},~\sum_{j=1}^{q'} d_j+\| \xi_0\|_4 \, \sum_{j=1}^{p'} c_j<1  \\
\mbox{and}~\big (1-\sum_{j=1}^{p} a_j z^j\big ) \, \big (1-\sum_{j=1}^{q} b_j z^j\big ) \neq 0~\mbox{for all $|z|\leq 1$} \Big \}.
\end{multline*}
Therefore, if $(a_1,\ldots,d_{q'}) \in \Theta_{p,q,p',q'}$,  $(\varepsilon_t)_t$ is a stationary GARCH$(p',q')$ process  and $(X_t)_t$ is a stationary weak invertible ARMA$(p,q)$ process. \\
Moreover, following Lemma 2.1. of \cite{barY}, we know that a stationary ARMA$(p,q)$-GARCH$(p',q')$ process is a stationary affine causal process with functions $f_{\theta}$ and $M_{\theta}$ satisfying the Assumption A$(f_{\theta},\Theta)$ and A$(M_{\theta},\Theta)$ with Lipschitzian coefficients decreasing exponentially fast, as well as their derivatives. Finally,  if $\Theta$ is a bounded subset of  $ \Theta_{p,q,p',q'}$, then assumptions D$(\Theta)$, Id$(\Theta)$ and Var($\Theta$) are automatically satisfied.

\medskip

\noindent Assume now that $(X_t)_{t\in \Z}$ is an ARMA$(p^*,q^*)$-GARCH$(p^{'*},q^{'*})$ process where $0\leq p^* \leq p_{\max}$, $0\leq q^* \leq q_{\max}$, $0\leq p^{'*} \leq p'_{\max}$ and $0\leq q^{'*} \leq q'_{\max}$ are unknown orders with also unknown parameters: $c^*_0, \ldots,c^*_{p^{'*}},d^*_1,\ldots,d^*_{q^{'*}},a^*_1,\ldots, a^*_{p^*}, b^*_1,\ldots,b_{q^*}$ .

\medskip

\noindent Let ${\cal M}$ be the family of ARMA$(p,q)$-GARCH$(p^{'},q^{'})$ processes, with $0\leq p \leq p_{\max}$, $0\leq q \leq q_{\max}$, $0\leq p' \leq p'_{\max}$ and $0\leq q' \leq q'_{\max}$. Hence, we consider here $d=p_{\max}+q_{\max}+p'_{\max}+q'_{\max}+1$, and
\[
\theta^*=\big (c^*_0, \ldots,c^*_{p^{'*}},0,\ldots,0,d^*_1,\ldots,d^*_{q^{'*}},0,\ldots,0,a^*_1,\ldots, a^*_{p^*},0,\ldots,0, b^*_1,\ldots,b_{q^*},0,\ldots,0\big ) \in \R^d.
\]
With $\Theta$ a bounded subset of  $ \Theta_{p_{\max},q_{\max},p'_{\max},q'_{\max}}$, all the previous assumptions D$(\Theta)$, Id$(\Theta)$, Var($\Theta$) are satisfied and K$(\Theta)$ is also satisfied as soon as $\kappa_n \to \infty$. As a consequence, in this framework the consistency Theorem \eqref{theo:1} and the Theorem \eqref{theo:2} of the estimator of the chosen model are satisfied when $r=4$ and $\kappa_n \to \infty$ (for instance with the typical BIC penalty $\kappa_n=\log n$).
\section{Portmanteau test}\label{Test}
From the above section, we are now able to asymptotically pick up a best model in a family of models. We can also obtain asymptotic confident regions of the estimated
parameter of the chosen model. However, it is also important to check whether the chosen model is appropriate.
 This section attempts to answer this question by constructing a portmanteau test as a diagnostic tool based on the squares of the residuals sequence of the chosen model.

\noindent This test has been widely considered in the time series literature, with procedures based on the squared residual correlogram (see for instance \cite{li2}, \cite{Ling1997} )
and the absolute  residual (or usual residuals) correlogram (see for instance \cite{li}, \cite{Francq2008}, \cite{li2008}), among others.

\noindent Since our goal is to provide an efficient test for the entire affine class that contains weak white noise processes, we consider in this setting the autocorrelation of the squared residuals and then we will follow the same scheme of procedure used in (\cite{li2}, \cite{Ling1997}) while relying on some of their results.

\medskip
\noindent For $m\in {\cal M}$, for $K$ a positive integer, denote the vector of adjusted correlogram of squares residuals by:
\begin{equation*}
\widehat{\rho}(m):=\big ( \widehat\rho_1(m), \ldots,\widehat\rho_{K}(m) \big )',
\end{equation*}
where for $k=1,\ldots,K$, $\displaystyle \widehat \rho_k(m):=\displaystyle \frac {\widehat \gamma_k(m)}{\widehat \gamma_0(m)}$ with
\begin{equation*}
 \widehat \gamma_k(m):=\displaystyle \frac 1 n \, \sum_{t=k+1}^{n} \big (\widehat e_t^2(m) -1 \big )\big (\widehat e_{t-k}^2(m) -1 \big )\quad\mbox{and}\quad\widehat e_t(m):=\displaystyle \big ( \widehat M_{\widehat \theta(m)}^{t}\big )^{-1} \big (X_t-\widehat f_{\widehat \theta(m)}^t\big).
\end{equation*}
Finally, the following theorem provides central limit theorems for $\widehat{\rho}(m^*)$ and $\widehat{\rho}(\widehat m)$ as well as for a portmanteau test statistic.

\begin{theo}
Under the assumptions of Theorem \ref{theo:2}, if
\begin{itemize}
\item $\E [\xi_0^3]=0$;
\item $\displaystyle
\sum_{t=1}^\infty t ^{-1/4} \Big (\sum_{j \ge t} \alpha_j (f_\theta,\Theta)+\alpha_j (M_\theta,\Theta) \Big )^{1/2}<\infty$ or $\displaystyle
\sum_{t=1}^\infty t ^{-1/4} \Big (\sum_{j \ge t} \alpha_j (\widetilde H_\theta,\Theta) \Big )^{1/2}<\infty$;
\end{itemize}
then,
\begin{enumerate}
\item With $V(\theta^*,m^*)$ defined in \eqref{VV}, it holds that
\begin{equation}
\sqrt n \, \widehat{\rho}(m^*) \limiteloin {\cal N}_{K} \big ( 0 \, , \, V(\theta^*,m^*) \big ).
\label{eq:por}
\end{equation}
\item With $\widehat Q_K(m^*):=n \, \widehat{\rho}(m^*)'\big ( V(\widehat \theta(m^*),m^*)\big )^{-1} \widehat{\rho}(m^*) $, we have
\begin{equation}
\widehat Q_K(m^*) \limiteloin \chi^2(K).
\label{eq:chi}
\end{equation}
\item The previous points 1. and 2. also hold when $m^*$ is replaced by $\widehat m$.
\end{enumerate}
\label{theo:3}
\end{theo}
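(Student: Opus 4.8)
The plan is to follow the classical two-stage strategy for portmanteau statistics built on estimated residuals (as in \cite{li2} and \cite{Ling1997}), decomposing $\sqrt n\,\widehat\rho(m^*)$ into an \emph{idealized} correlogram based on the true innovations plus a linear correction coming from the estimation of $\theta^*$. Writing $e_t(\theta)=(M_\theta^t)^{-1}(X_t-f_\theta^t)$ for the residual built on the whole (unobserved) past, one has the crucial identity $e_t(\theta^*)=\xi_t$. I would first show that replacing the feasible $\widehat e_t(m^*)$ (computed with the truncated/initialized past) by $e_t(\widehat\theta(m^*))$ produces an error that is $o_P(n^{-1/2})$ in each $\widehat\gamma_k$. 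This truncation control is precisely what the second displayed hypothesis $\sum_t t^{-1/4}\big(\sum_{j\ge t}\alpha_j\big)^{1/2}<\infty$ is designed for, combined with the $r\ge 4$ moment assumption, which is needed because squared residuals bring in fourth-order moments of $\xi_0$.

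With the idealized residuals in hand, set $\gamma_k(\theta)=\frac1n\sum_t\big(e_t(\theta)^2-1\big)\big(e_{t-k}(\theta)^2-1\big)$, so that the idealized autocovariance equals $\gamma_k(\widehat\theta(m^*))$. Using Assumptions A on the second derivatives of $f_\theta,M_\theta$ (or $\widetilde H_\theta$) together with the uniform ergodic bounds underlying the new versions of the limit theorems, I would Taylor-expand around $\theta^*$ to obtain $\sqrt n\,\widehat\gamma_k=\sqrt n\,\gamma_k(\theta^*)+C_k'\,\sqrt n\,(\widehat\theta(m^*)-\theta^*)+o_P(1)$, where $\gamma_k(\theta^*)=\frac1n\sum_t(\xi_t^2-1)(\xi_{t-k}^2-1)$ and $C_k$ is the deterministic limit of $\partial_\theta\gamma_k(\theta^*)$. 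Dividing by $\widehat\gamma_0$, which tends in probability to $\v(\xi_0^2)=\E[\xi_0^4]-1>0$, yields the analogous expansion for $\sqrt n\,\widehat\rho(m^*)$.

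The crux is a joint central limit theorem for the stacked vector $\big(\sqrt n\,\gamma_1(\theta^*),\ldots,\sqrt n\,\gamma_K(\theta^*),\ \sqrt n\,(\widehat\theta(m^*)-\theta^*)\big)$. The first block is a sum of the vector martingale differences $\big((\xi_t^2-1)(\xi_{t-k}^2-1)\big)_{1\le k\le K}$ (zero mean since $\E[\xi_t^2-1\mid\mathcal F_{t-1}]=0$, and uncorrelated across lags by independence of the innovations), so a martingale CLT applies and delivers the variance $(\v(\xi_0^2))^2 I_K$, i.e.\ $I_K$ after normalization by $\widehat\gamma_0$. The second block is asymptotically a linear functional of the score $\partial_\theta q_t(\theta^*)$ through $F(\theta^*,m^*)^{-1}$, by Theorem \ref{theo:2}. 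The only delicate point is the cross-covariance between the two blocks. Expanding $\partial_\theta q_t(\theta^*)=-2\,\xi_t\,(M_{\theta^*}^t)^{-1}\partial_\theta f_{\theta^*}^t-(\xi_t^2-1)\,(H_{\theta^*}^t)^{-1}\partial_\theta H_{\theta^*}^t$, one sees that the contribution of the mean part to that cross-covariance is governed by $\E[\xi_0^3]$ (through terms $\E[\xi_t(\xi_t^2-1)]$); the assumption $\E[\xi_0^3]=0$ annihilates it, leaving only the past-measurable variance part, which is exactly what produces the closed form $V(\theta^*,m^*)$ of \eqref{VV}. Assembling the three steps gives \eqref{eq:por}.

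For \eqref{eq:chi}, I would note that $V(\cdot,m^*)$ is continuous and $\widehat\theta(m^*)\to\theta^*$ in probability by Theorem \ref{theo:1}, so $V(\widehat\theta(m^*),m^*)\to V(\theta^*,m^*)$, which is invertible of rank $K$; Slutsky's lemma together with the standard fact that $Z'V^{-1}Z\sim\chi^2(K)$ for $Z\sim\mathcal N_K(0,V)$ then gives the result. Point 3 is immediate from the model-selection consistency $\P(\widehat m=m^*)\to 1$ of Theorem \ref{theo:1}: on the event $\{\widehat m=m^*\}$, whose probability tends to one, $\widehat\rho(\widehat m)$ and $\widehat Q_K(\widehat m)$ coincide pointwise with $\widehat\rho(m^*)$ and $\widehat Q_K(m^*)$, hence share the same weak limits. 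The main obstacle is the third step: the uniform truncation control is technical but routine given the stated summability hypothesis, whereas correctly identifying the cross-covariance structure---and in particular verifying that $\E[\xi_0^3]=0$ exactly cancels the mean-part contribution---is where the real work lies and where the explicit form of $V(\theta^*,m^*)$ is pinned down.
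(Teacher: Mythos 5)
Your proposal follows essentially the same route as the paper's proof: truncation control of $\widehat\gamma_k-\gamma_k$ at the $\sqrt n$ scale via the summability hypothesis, a Taylor expansion of $\gamma_k$ around $\theta^*$ combined with a joint CLT for $\big(\sqrt n\,\gamma(\theta^*),\sqrt n\,(\widehat\theta(m^*)-\theta^*)\big)$ in which the score decomposition and the hypothesis $\E[\xi_0^3]=0$ kill exactly the mean-part cross-covariance terms (the paper's computation of $\E[\partial_\theta L_n(\theta^*)\gamma_k(\theta^*)]$), then Slutsky with $\widehat\gamma_0\to\mu_4-1$ for point 1, continuity plus Slutsky for point 2, and the consistency $\P(\widehat m=m^*)\to 1$ for point 3. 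The argument is correct and matches the paper's structure, including the identification of where $V(\theta^*,m^*)$ comes from.
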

\noindent Using the Theorem \ref{theo:3}, we can asymptotically test:\\
~\\
\qquad\qquad $\displaystyle \left \{ \begin{array}{l} H_0:~\mbox{$\exists m^*\in {\cal M}$, such as $(X_1,\ldots,X_n)$ is a trajectory of $X \in {\cal AC}(M_{\theta},f_{\theta^*})$ with $\theta^*\in \Theta(m^*)$}\\
~\\
H_1:~\mbox{$\nexists m^*\in {\cal M}$, such as $(X_1,\ldots,X_n)$ is a trajectory of $X \in {\cal AC}(M_{\theta},f_{\theta^*})$ with $\theta^*\in \Theta(m^*)$}\end{array} . \right . $ \\
~~\\
~~\\
\noindent Therefore, $\widehat Q_K(\widehat m) $ can be used as a portmanteau test statistic to decide between $H_0$ and $H_1$ and diagnose the goodness-of-fit of the selected model.\\
\begin{remark}
\begin{enumerate}
\item Like in \cite{li2}, it is important to point out that for $ARCH(p)$ model, since
$f_{\theta}^t=0$, we have $\E\Big [ \big(\xi_{0}^2 -1\big)\, \partial_\theta \log \big (M_{\theta^*}^k\big )\Big ]=0 $ for all $k > p$.
Hence, for these models, the matrix $V(\theta^*,m^*)-I_K $ will have approximately zero entries from the $(p+1)^{th}$ row onwards and then the standard errors of
$\widehat{\rho}(m^*)_i $ are in this case equal to $ 1/\sqrt{n}$ for $ i= p+1, \ldots, K$.
The statistic $\widehat{Q}_K(m^*)$ yields to  $\widehat{Q}(p,K):=n\sum_{i=p+1}^K [\widehat{\rho}(m^*)_i ]^2 $ which will be asymptotically $\chi^2$ distributed with $K-p$ degrees of freedom.

\item In practice the constant $\mu_4$ and the rows of the matrix $V(\widehat \theta(m^*),m^*)$ involved in the previous theorem are estimated by the correspondent sample average; they are respectively  $\widehat{\mu}_4=\frac{1}{n} \sum_{t=1}^{n}(\widehat{e}_t(\widehat m))^4$ and $\big(\widehat V(\widehat \theta((\widehat m)),(\widehat m))\big)_{k,.}=\frac{1}{n} \sum_{t=k+1}^{n}[(\widehat{e}_t(\widehat m))^2-1][\partial_\theta \log \big (M_{\theta}^k)]_{(\theta=\widehat{\theta}(\widehat m))}.$
\end{enumerate}
\end{remark}

\section{Numerical Results}\label{Numeric}

This section features some simulation experiments that are performed to assess the usefulness of the asymptotic results obtained in Section \ref{sec:3}. The various configurations studied are presented below and we compare the performance of penalties $\log n$ and $\sqrt{n}$. The process used to generate the trajectory is indicated each time.

\medskip

\noindent Each model is generated independently  1000 times over a trajectory of length $n$. Different sample sizes are considered to identify possible discrepancies between asymptotically expected properties and those obtained at finite distance. We will consider $n$ belongs to $\{100,500,1000,2000\}$.
Throughout this section, $(\xi_t)$ represents a Gaussian white noise with variance unity.

\subsection{Classical configurations}
We first simulate some classical model illustrated as follows  and the results are displayed in the Table \ref{tab:1}.

\begin{enumerate}
\item Model 1, AR$(2)$ process: $X_t=0.4X_{t-1}+0.4X_{t-2}+\xi_t$.
\item Model 2, ARMA$(1,1)$ process: $X_t=0.3X_{t-1}+\xi_t+0.5\xi_{t-1}$.
\item Model 3, ARCH$(2)$ process: $X_t=\xi_t \sqrt{0.2+0.4X_{t-1}^2+0.2X_{t-2}^2}$.
\end{enumerate}
We considered as competitive models all the models in the family ${\cal M}$ defined by:
$$
{\cal M}=\big \{\mbox{ARMA$(p,q)$ or GARCH$(p',q')$ processes with $0\le p,q,p' \le 5$, $1\le q'\le 5$} \big \}.
$$
As a consequence, there are $66$ candidate models. \\
~\\
The Table \ref{tab:1} shows for each penalty ($\log n$ and $\sqrt{n}$) the percentage of times the associated criterion  selects respectively a wrong model, the true model and an overfitted model (here a model which contains the true model).

\begin{center}
\captionof{table}{Percentage of selected order  based on 1000 replications depending on sample's length for Model 1, 2 and 3 respectively.} \label{tab:1}
\begin{tabular}{|l|c|lr|lr|lr|lr|}
\toprule
\multicolumn{1}{|c|}{}& \multicolumn{1}{|c|}{Sample length $n$ }& \multicolumn{2}{c}{$100$ } & \multicolumn{2}{c}{$500$ }& \multicolumn{2}{c}{$1000$ } & \multicolumn{2}{c|}{$2000$ }\\
& Penalty &  $\log n$ & $\sqrt{n}$  &  $\log n$ & $\sqrt{n} $&  $\log n$ & $\sqrt{n} $&  $\log n$ & $\sqrt{n} $ \\
\midrule
&Wrong &21     &32.3   &3    &0.9   &0.9   &0   &0.2  &0     \\
Model 1&True &74.6    &67.5   &95.8 &99.1 &98.2    &100    &99  & 100\\
& Overfitted &4.4     &0.2    &1.2  &0   &0.9 &0  &0.8  &0 \\
\hline
\midrule
& Wrong &81.8 &97.5 &30.1 &67.4 &19.9 &33.2 &10.2  &10.5     \\
Model 2 & True &16.1 &2.5 &69.1 &32.6 &79.5 &66.8 &89.4  &89.5 \\
& Overfitted &2.1 &0 &0.8 &0 &0.6 &0  &0.4  &0 \\
\hline
\midrule
& Wrong &78.9 &92.9 &25.7 &70.5 &11.6. &39.2 &5.4 & 11.4    \\
Model 3 &True &20.4 &7.0 &73.2 &29.5 &88.1 &60.8 &94.3 &88.6\\
& Overfitted &0.1 &0.1 &1.1 &0 &0.3 &0  & 0.3 & 0\\
\bottomrule
\end{tabular}
\end{center}

\vspace{0.5cm}
From these results, it is clear that the consistency of our model selection procedure is numerically convincing, which is in accordance with Theorem \ref{theo:1}, where both the criteria are consistent for Model 1, 2 and 3. Note also that the typical BIC $\log n$ penalty is the most interesting for retrieving the true model than the  $\sqrt{n}$-penalized likelihood  for a small sample size.
But the larger the sample size, the more accurate the $\sqrt n$ penalty case. \\
~\\
For each of the three models, we also applied the portmanteau test statistic $\widehat Q_K(\widehat m) $, using the $\sqrt n$ penalty.
Table \ref{tab:2} shows the empirical size and empirical power of this test. We call by empirical size, the percentage of  falsely rejecting the null hypothesis $H_0$.
On the other hand, the empirical power represents the percentage of rejection of $H_0$ when we arbitrary chose a false model, which is a AR$(3)$ process
 $X_t=0.2X_{t-1}+0.2X_{t-2}+0.4X_{t-1}+\xi_t$ for Model 1 and 2, and a ARCH$(3)$ process $X_t=\xi_t \, \sqrt{0.4+0.2X_{t-1}^2+0.2X_{t-2}^2+0.2X_{t-3}^2}$ for Model 3. \\
~\\
It is important to note that choosing the maximum number of lags $K$ is sometimes tricky. To our knowledge, there is no real theoretical study to justify the choice of one
value or another. However, some Monte Carlo simulations have suggested some ways to make a good choice . For instance \cite{li2} suggested that the autocorrelations
 $\widehat{\rho}_k(\widehat m)$ with $1\le k\le K$ have a better asymptotic behaviour for small values of $k$. Therefore,  the finite sample performance of the
  size and power of the test may also vary with the choice of $K$ and could be better for small values of $K$. On the other hand, \cite{tse} suggested that $K=p+q+1$ may be
  an appropriate choice for the GARCH$(p,q)$ family.\\
Thus, in our tests, we consider $K=3 \; \mbox{and} \; K=6 $  so that the rejection is based on the upper 5th percentile of the $\chi^2 (3)$ distribution on the one hand and $\chi^2 (6)$ on the other hand.

\begin{center}
\captionof{table}{The empirical size and empirical power of the portmanteau test statistic $\widehat Q_K(\widehat m) $ based on 1000 independent replications (in $\% $) with $K=3$ and $K=6$.} \label{tab:2}
\begin{tabular}{|lr|lr|lr|lr|lr|}
\toprule
\multicolumn{2}{|c|}{Sample length}& \multicolumn{2}{c}{$100$ } & \multicolumn{2}{c}{$500$ }& \multicolumn{2}{c}{$1000$ } & \multicolumn{2}{c|}{$2000$ }\\
&  &  size & power&  size & power&  size & power&  size & power \\
\midrule
\multirow{3}{*}{$K=3$} & Model 1 &3.5 &13.6 &3.8 &48.1 &3.5 &82.7 &3.2  &97.7     \\
 & Model 2 &4.0 &6.7 &5 &21.7 &4.8 &38.6 &4.4  &64.2 \\
 & Model 3 &4.3 &52.7 &4.2 &98.6 &3.2 &99.6  &3.6 &99.9\\
 \hline
 \midrule
\multirow{3}{*}{$K=6$} & Model 1 &3.5 &9.4 &4.8 &43.1 &5.3 &74.6 &4.5  &97.6     \\
& Model 2 &2.1 &6.3 &4.9 &18 &4.5 &32.2 &6.4  &61.3 \\
& Model 3 &3 &18.3 &3.1 &91.5 &3.4 &99.6  &6.8 &99.7\\
\bottomrule
\end{tabular}
\end{center}
Once again, the results of Table \ref{tab:2} numerically confirms the asymptotic results of Theorem \ref{theo:3}. Remark that the test is more powerful by using values of $K$ not too large as mentioned above especially for small samples.

\subsection{Subset model selection}
Now, we exhibit the performance of the criteria on a particular case of dimension selection. The process generated data is considered as follows:

\[ \textnormal{Model 4}: X_t=0.4X_{t-3}+0.4X_{t-4}+\xi_t.\]
Here, we will consider the case of a nonhierarchical but exhaustive family ${\cal M}$ of AR$(4)$ models , {\it i.e.}
\begin{eqnarray*}
{\cal M}&=&{\cal P}(\{1,2,3,4\}) \\
&\Longrightarrow  & X_t=\theta_1 X_{t-1}+\theta_2 X_{t-2}+\theta_3 X_{t-3}+\theta_4 X_{t-4} +\xi_t~\mbox{and}~\theta=(\theta_1,\theta_2,\theta_3,\theta_4)'\in \Theta(m).
\end{eqnarray*}
As a consequence, $16=2^4$ candidate models are considered and Table \ref{tab:3} presents the results of the selection procedure.

\begin{center}
\captionof{table}{Percentage of selected model based on 1000 replications depending on sample's length for Model 4} \label{tab:3}
\begin{tabular}{|c|lr|lr|lr|lr|}
\toprule
\multicolumn{1}{|c|}{Sample length}& \multicolumn{2}{c}{$100$ } & \multicolumn{2}{c}{$500$ }& \multicolumn{2}{c}{$1000$ } & \multicolumn{2}{c|}{$2000$ }\\
 & $\log n$ & $\sqrt{n}$&  $\log n$ & $\sqrt{n}$&  $\log n$ & $\sqrt{n}$& $\log n$ & $\sqrt{n}$ \\
\midrule
true model &85.9 &68 &97.5 &100 &96.8 &100 & 98.9 & 100     \\
overfitted &7.9 &2 &2.5 &0 &3.2 &0 &1.1 & 0\\
false model &6.2 &30 &0 &0 &0 &0 & 0 & 0\\
\bottomrule
\end{tabular}
\end{center}
We deduce  that the consistency of our model selection procedure is also numerically convincing in this case of exhaustive model selection, which is in accordance with Theorem \ref{theo:1}

\subsection{Slow decrease of the Lipschitz coefficients}
In this subsection, we consider an $AR(2)-ARCH(\infty)$ with a slow decrease of its Lipschitz coefficients in order to numerically show that the penalty $\log{n}$ is not consistent in all cases. The considered data generating process is featured as follows:
\[ \textnormal{Model 5}: X_t=-0.45\,X_{t-1}+0.4\,X_{t-2}+\xi_t \;\textnormal{with}\; \xi_t=\varepsilon_t \, \sqrt{0.5+0.1 \, \sum_{i\ge 1} \xi_{t-i}^2}/ {i^3}, \]
 where $\varepsilon_t$ is an i.i.d random sequence with mean $0$ and variance $1$. The sequence $(\alpha_i)_{i\ge 1}$ verifies $\alpha_i= O(i^{-3})$ so that the sequence of
 Lipschitz coefficients of $M_{\theta}^{\xi}$ is given by  $\alpha_i (M_{\theta}^{\xi})=O(i^{-1.5})$ and then the decrease rate of the sequence
  $\big(\alpha_i (M_{\theta}^{X})\big)$ is equal to $O(i^{-1.5})$. From Remark \ref{rem:1}, all penalties such as $n^{\delta}$ with $\delta > 2-1.5=0.5$ will lead to a
  consistent model selection criterion and this is not the case for the typical BIC $\log n$ penalty. We have considered $\delta=2/3$ as in the Bridge Criteria (BC) recently proposed in \cite{ding2018bridging}. Here the family of model ${\cal M}$ is defined by
    \[ {\cal M}=\big \{\mbox{AR$(p)$-ARCH$(\infty)$ processes with $1\leq p \leq 8$, where the ARCH($\infty$) is defined as in Model 5} \big \}.\]
The results of simulations are featured in Table \ref{tab:4}.
\begin{center}
\captionof{table}{Percentage of selected order   based on 1000 replications depending on sample's length for model 5} \label{tab:4}
\begin{tabular}{|c|lr|lr|lr|lr|}
\toprule
\multicolumn{1}{|c|}{Sample length}& \multicolumn{2}{c}{$100$ } & \multicolumn{2}{c}{$500$ }& \multicolumn{2}{c}{$1000$ } & \multicolumn{2}{c|}{$2000$ }\\
 &  $\log n$ & $n^{2/3}$&  $\log n$ & $n^{2/3}$& $\log n$ & $n^{2/3}$& $\log n$ & $n^{2/3}$ \\
\midrule
$p<2$ &8.9 &69.1  &0.1  &17.6 &0   &6.1  &0   &0.9    \\
$p=2$ &88.9 &30.9  &75.3 &82.4 &71.4  &93.9 &72.5  &99.1  \\
$p>2$ &2.2  &0   &24.6 &0  &28.6  &0  &27.5  &0   \\

\bottomrule
\end{tabular}
\end{center}

\vspace{0.5cm}
\noindent
Note that we also computed the $\log n$ criterion for $n=5000$ and $n=10000$ in additional numerical experiments and its frequencies of choice of the true order $p=2$ were almost $72 \%$. As a consequence, for this model selection framework of the infinite memory process with a slow decrease of Lipschitz coefficients, the usual BIC penalty $\log{n}$ seems numerically not sufficient to avoid overfitting in contrast with a $n^{2/3}$ penalty that leads to a consistent criterion.
\subsection{Illustrative Example}

We consider the returns of the daily closing prices of the FTSE index of the London Stock Exchange 100. They are 2273 observations from January 4th, 2010 to December 31st, 2018. The mean and standard deviation of the  returns are -0.54 and 57.67, respectively. The Time plot and the correlograns for the log returns and squared log returns are plotted in Figure \ref{arma}.

The Figures (\ref{fig:pl1}) and (\ref{fig:pl2}) exhibit the conditional heteroskedasticity in the log return time series. Moreover, Figure (\ref{fig:acf1}) shows that more than 5 per cent of the autocorrelations are out of the confidence interval $\pm 1.96/\sqrt{2273}$ and specially the Figure (\ref{fig:acf2}) suggests that the  strong white noise assumption cannot be sustained for this log-returns sequence of FTSE index.

\begin{figure}
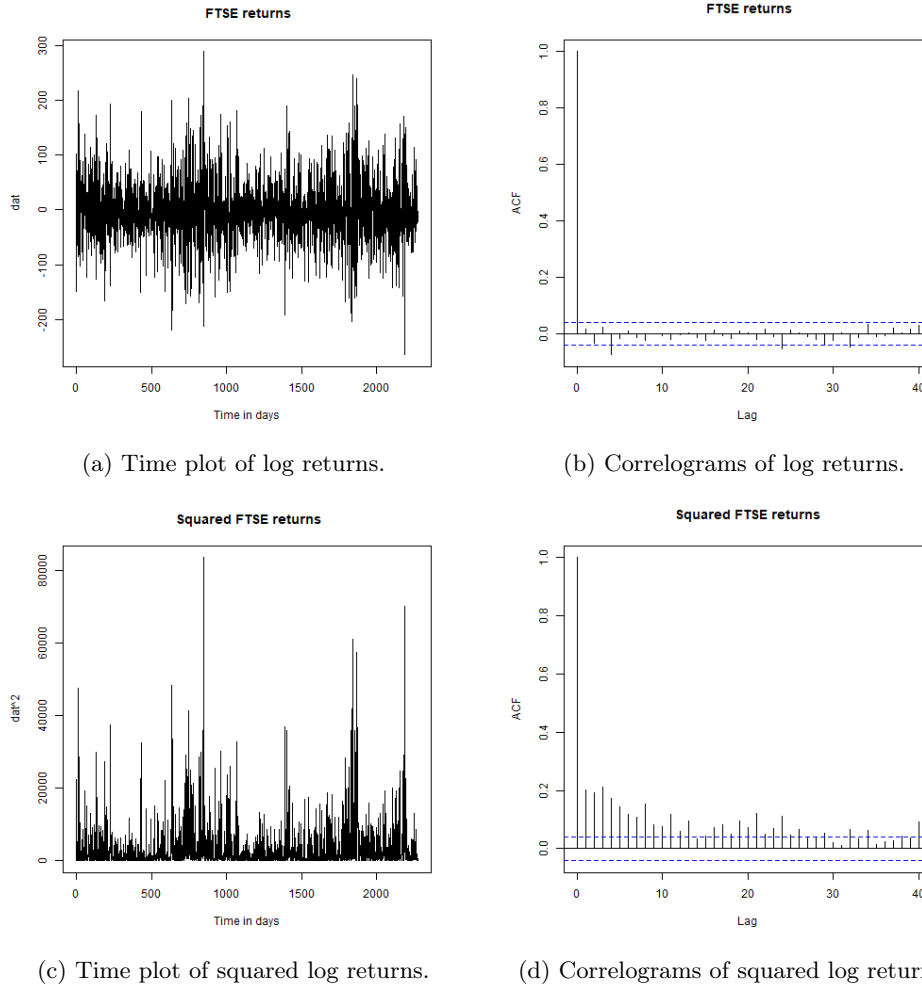

  \centering
  \begin{tabular}{cc}
     \subcaptionbox{Time plot of log returns.\label{fig:pl1}}[0.4\linewidth]{\includegraphics[width=6cm]{plot1.png}} &
    \subcaptionbox{Correlograms of log returns.\label{fig:acf1}}[0.4\linewidth]{\includegraphics[width=6cm]{acf1.png}} \\
  \subcaptionbox{Time plot of squared log returns.\label{fig:pl2}}[0.4\linewidth]{\includegraphics[width=6cm]{plot2.png}} &
    \subcaptionbox{Correlograms of squared log returns.\label{fig:acf2}}[0.4\linewidth]{\includegraphics[width=6cm]{acf2.png}}
  \end{tabular}
  \caption{Daily closing FTSE 100 index (January 4th, 2010 to December 31 st, 2018).\label{arma}}
\end{figure}

Therefore, the $GARCH(p,q)$ family was considered for the modelling of the FTSE index with $(p,q) \in \llbracket 1; 10\rrbracket \times \llbracket 0; 10\rrbracket $ which lead us to 110 candidate models. The penalization $\log n$ and $\sqrt{n} $  have been applied to identify the best order and the goodness-of-fit of the selected model has been tested by the portmanteau test. Based on the results of the simulations, we set $K=3$ for the portmanteau test statistic.

The GARCH$(1,1)$ is the "best" model according to both criteria (related to above penalizations) and the portmanteau statistic $\widehat{Q}_3(\widehat m) \simeq 2.13$ is associated with a p-value of $0.55$.
Hence,  the selected model GARCH$(1,1)$ is adequate to model the FTSE 100 index  using either criterion.

\section{Proofs}\label{Proofs}
We start with the proof of the Proposition \ref{prop0}.
\begin{proof}
For ease of writing, consider only the general case where $f^{(i)}_{\theta_i}=g^{(i)}_{\alpha_i}$ and $M^{(i)}_{\theta_i}=N^{(i)}_{\beta_i}$ where $\theta_i={}^t(\alpha_i,\beta_i)$ for $i=1,2$. Now, assume that there exist $\alpha \in \R^{\delta}$, where $0 \leq \delta \leq \min(d_1,d_2)$ and a function $h_\alpha$ such as $g^{(1)}_{\alpha_1}=h_{\alpha}+ \ell^{(1)}_{\alpha'_1}$, $f^{(2)}_{\alpha_2}=h_{\alpha}+ \ell^{(2)}_{\alpha'_2}$ with $\alpha_1={}^t (\alpha,\alpha'_1)$ and $\alpha_2={}^t (\alpha,\alpha'_2)$ and $\ell^{(i)}_{0}=0$. \\
Similarly, assume that there exist $\beta \in \R^{\delta'}$, where $0 \leq \delta' \leq \min(d_1,d_2)$ and a function $R_\beta$ such as $N^{(1)}_{\beta_1}=R_{\beta}+ m^{(1)}_{\beta'_1}$, $N^{(2)}_{\beta_2}=R_{\beta}+ m^{(2)}_{\beta_2'}$ with $\beta_1={}^t (\beta,\beta'_1)$ and $\beta_2={}^t (\beta,\beta'_2)$ and $m^{(i)}_{0}=0$. \\
Consider now $\theta={}^t (\alpha,\alpha'_1,\alpha'_2,\beta,\beta'_1,\beta'_2) \in \R^d$ (and therefore $\max(d_1,d_2)\leq d \leq d_1+d_2$), $f_\theta=h_{\alpha}+ \ell^{(1)}_{\alpha'_1}+ \ell^{(2)}_{\alpha'_2}$  and $M_\theta=R_{\beta}+ m^{(1)}_{\beta'_1}+ m^{(2)}_{\beta'_2}$. Then if $X \in \mathcal{AC}\big(M_{\theta},f_{\theta}\big)$, \mbox{for any $t\in \Z$},
\begin{multline*}
X_t=\big (R_{\beta}((X_{t-k})_{k \geq 1})+ m^{(1)}_{\beta'_1}((X_{t-k})_{k \geq 1})+ m^{(2)}_{\beta'_2}((X_{t-k})_{k \geq 1}) \big ) \, \xi_t \\
+ \big (h_{\alpha}((X_{t-k})_{k \geq 1})+ \ell^{(1)}_{\alpha'_1}((X_{t-k})_{k \geq 1})+ \ell^{(2)}_{\alpha'_2}((X_{t-k})_{k \geq 1}) \big ).
\end{multline*}
Then, for $\alpha_2'=\beta_2'=0$, $X \in  \mathcal{AC}\big(M^{(1)}_{\theta_1},f^{(1)}_{\theta_1}\big)$ and for $\alpha_1'=\beta_1'=0$, $X \in  \mathcal{AC}\big(M^{(2)}_{\theta_2},f^{(2)}_{\theta_2}\big)$.\\
\end{proof}
\noindent In the sequel, some lemmas are stated and theirs proofs are given.
\begin{lemma} \label{lem1}
Let $X \in \mathcal{AC}(M_{\theta},f_{\theta})$ (or $\widetilde{\mathcal{AC}}(\widetilde {H}_{\theta})$) and $\Theta \subseteq \Theta(r)$  (or $\Theta \subseteq \widetilde \Theta(r)$) with {$r\ge 2$}. Assume that the assumptions $D(\Theta)$ and $K(\Theta)$ (or $\widetilde K(\Theta)$) hold. Then:
\begin{equation}
\frac{1}{\kappa_n} \, \big \|\widehat{L}_n(\theta)-L_n(\theta) \big \|_{\Theta} \limiteasn 0.
\label{eq:kounias}
\end{equation}
\label{lem:1}
\end{lemma}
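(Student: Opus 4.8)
The plan is to reduce the claim to a strong law of large numbers for the termwise differences $\big\|\widehat q_t-q_t\big\|_\Theta$. Since $\widehat L_n(\theta)-L_n(\theta)=-\tfrac12\sum_{t=1}^n\big(\widehat q_t(\theta)-q_t(\theta)\big)$, the triangle inequality (taking the $\Theta$-supremum inside the sum) gives
\[
\big\|\widehat L_n-L_n\big\|_\Theta\le\frac12\sum_{t=1}^n U_t,\qquad U_t:=\big\|\widehat q_t-q_t\big\|_\Theta,
\]
so it suffices to prove $\kappa_n^{-1}\sum_{t=1}^n U_t\limiteasn 0$.

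First I would establish a sharp pointwise bound on $U_t$. Writing $a_t=X_t-f_\theta^t$ and $\widehat a_t=X_t-\widehat f_\theta^t$, one decomposes
\[
\widehat q_t-q_t=\frac{\widehat a_t^2-a_t^2}{\widehat H_\theta^t}+a_t^2\,\frac{H_\theta^t-\widehat H_\theta^t}{H_\theta^t\widehat H_\theta^t}+\big(\log\widehat H_\theta^t-\log H_\theta^t\big).
\]
The basic ingredients are Assumption $A$, which yields $\|f_\theta^t-\widehat f_\theta^t\|_\Theta\le\sum_{k\ge t}\alpha_k(f_\theta,\Theta)|X_{t-k}|$ and $\|M_\theta^t-\widehat M_\theta^t\|_\Theta\le\sum_{k\ge t}\alpha_k(M_\theta,\Theta)|X_{t-k}|$ (the deterministic initialisation $u$ contributing only finitely many extra terms), together with Assumption $D(\Theta)$, giving $H_\theta^t,\widehat H_\theta^t\ge\underline h$. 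The delicate point is the middle term: factoring $|H_\theta^t-\widehat H_\theta^t|=|M_\theta^t-\widehat M_\theta^t|\,\big(|M_\theta^t|+|\widehat M_\theta^t|\big)$ and keeping the factor $|M_\theta^t|+|\widehat M_\theta^t|$ together with the full denominator $H_\theta^t\widehat H_\theta^t=(M_\theta^t)^2(\widehat M_\theta^t)^2$, one uses that
\[
\frac{|M_\theta^t|+|\widehat M_\theta^t|}{(M_\theta^t)^2(\widehat M_\theta^t)^2}\le 2\,\underline h^{-3/2}
\]
is uniformly bounded (the denominator dominating the numerator, by $D(\Theta)$). Hence the heaviest contribution is $a_t^2\,|\widehat M_\theta^t-M_\theta^t|$, which is only \emph{cubic} in the observations, and one obtains
\[
U_t\le C\,\Big(\sum_{j\ge t}\big(\alpha_j(f_\theta,\Theta)+\alpha_j(M_\theta,\Theta)\big)\Big)\,\Xi_t,
\]
where $\Xi_t$ is a product of at most three factors of the form $|X_s|$.

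The conclusion then follows from a moment computation and a Kronecker-type argument. By stationarity and $\theta^*\in\Theta(r)$, the factors of $\Xi_t$ lie in $\L^r$, so with $s=\min(1,r/3)$ one has $3s\le r$ and therefore $\sup_t\E[\Xi_t^s]<\infty$. Consequently
\[
\sum_{t\ge1}\frac{\E[U_t^s]}{\kappa_t^s}\le C\sum_{t\ge1}\frac1{\kappa_t^s}\Big(\sum_{j\ge t}\big(\alpha_j(f_\theta,\Theta)+\alpha_j(M_\theta,\Theta)\big)\Big)^s<\infty,
\]
the finiteness being guaranteed by Assumption $K(\Theta)$. By monotone convergence $\sum_t U_t^s/\kappa_t^s<\infty$ almost surely; since $0<s\le1$ one has $U_t/\kappa_t\le U_t^s/\kappa_t^s$ for large $t$, so $\sum_t U_t/\kappa_t$ converges a.s., and Kronecker's lemma (the strong law of large numbers of \cite{kounias}) yields $\kappa_n^{-1}\sum_{t=1}^n U_t\to0$ a.s. The case $\widetilde{\mathcal{AC}}(\widetilde H_\theta)$ is treated identically, except that $\widetilde H_\theta$ is Lipschitz in the squared variables, so $\|\widetilde H_\theta^t-\widehat{\widetilde H}_\theta^t\|_\Theta\le\sum_{k\ge t}\alpha_k(\widetilde H_\theta,\Theta)X_{t-k}^2$ is quadratic and (with $f\equiv0$) the heaviest term $X_t^2\,|\widetilde H_\theta^t-\widehat{\widetilde H}_\theta^t|$ is quartic; this forces $s=\min(1,r/4)$ and invokes Assumption $\widetilde K(\Theta)$ instead.

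The main obstacle is the sharp pointwise estimate on $U_t$: one must simultaneously bring the supremum over $\theta$ inside (so that the $\Theta$-Lipschitz coefficients $\alpha_k(\cdot,\Theta)$ appear) and exploit the cancellation in the middle term that keeps it cubic rather than quartic. The boundedness of $\big(|M_\theta^t|+|\widehat M_\theta^t|\big)/\big((M_\theta^t)^2(\widehat M_\theta^t)^2\big)$ is precisely what distinguishes the exponent $s=\min(1,r/3)$ of Assumption $K(\Theta)$ from the exponent $s=\min(1,r/4)$ of Assumption $\widetilde K(\Theta)$.
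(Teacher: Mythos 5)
Your proposal is correct and follows essentially the same route as the paper's proof: reduce \eqref{eq:kounias} to a strong law for the termwise differences $\|\widehat q_t(\theta)-q_t(\theta)\|_\Theta$, bound these pointwise using $D(\Theta)$ and the Lipschitz coefficients of Assumption A (with exactly the cancellation $\big(|M_\theta^t|+|\widehat M_\theta^t|\big)/(H_\theta^t\widehat H_\theta^t)\le 2\underline h^{-3/2}$ that keeps the dominant term cubic, hence $s=\min(1,r/3)$, versus quartic and $s=\min(1,r/4)$ in the $\widetilde{\mathcal{AC}}$ case), then apply H\"older/Minkowski to get $\E\big[\|\widehat q_t-q_t\|_\Theta^s\big]\le C\big(\sum_{j\ge t}\alpha_j\big)^s$ and conclude under $K(\Theta)$ (or $\widetilde K(\Theta)$) via the SLLN of \cite{kounias}. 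Your Kronecker-lemma derivation is just a self-contained rendering of the Corollary 1 of \cite{kounias} that the paper cites, so the two arguments coincide in substance.
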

\begin{proof}
We have
$|\widehat{L}_n(\theta)-L_n(\theta)|\le \sum_{t=1}^n|\widehat{q}_t(\theta)-q_t(\theta)| $. Then,
\[
\frac{1}{\kappa_n} \, \big \|\widehat{L}_n(\theta)-L_n(\theta) \big \|_{\Theta}\le \frac{1}{\kappa_n} \, \sum_{t=1}^n\|\widehat{q}_t(\theta)-q_t(\theta)\|_{\Theta}.\]
By Corollary 1 of \cite{kounias}, with $r\le 3$, \eqref{eq:kounias} is established when:
\begin{equation}
\sum_{k\ge 1}(\frac{1}{\kappa_k})^{r/3} \mathbb{E}\big(\|\widehat{q}_k(\theta)-q_k(\theta)\|_{\Theta} ^{r/3}\big) < \infty.
\end{equation}
With $r\geq 3$, and under the assumptions, we first recall some results already obtained in \cite{barW}: for any $t \in \Z$,
\begin{eqnarray}\label{recall}
&& \bullet \quad \E \big [|X_t|^r +\| f_\theta^t \|^r _{\Theta} +\| \widehat{f}_\theta^t \|^r _{\Theta} +\| M_\theta^t \|^r _{\Theta} + \| \widehat{M}_\theta^t \|^r _{\Theta} +\| H_\theta^t \|^{r/2} _{\Theta} + \| \widehat{H}_\theta^t \|^{r/2} _{\Theta} \big ] <\infty \\
\label{recall2} &&\bullet \quad \left \{ \begin{array}{l} \E \big [\| f_\theta^t -\widehat{f}_\theta^t \|^r _{\Theta}\big ]\leq  C \,\Big ( \sum_{j \ge t} \alpha_j (f_\theta,\Theta) \Big )^r\\
\E \big [\| M_\theta^t -\widehat{M}_\theta^t \|^r _{\Theta}\big ]\leq  C \,\Big (  \sum_{j \ge t} \alpha_j (M_\theta,\Theta) \Big )^r\\
\E \big [\| H_\theta^t -\widehat{H}_\theta^t \|^{r/2} _{\Theta}\big ]\leq   C \, \Big ( \min \Big \{ \sum_{j \ge t} \alpha_j (M_\theta,\Theta)~, ~ \sum_{j \ge t} \alpha_j (H_\theta,\Theta) \Big \} \Big )^{r/2}.
\end{array} \right .
\end{eqnarray}
For any $\theta \in \Theta$, we have:
\begin{align*}
&|\widehat{q}_t(\theta)-q_t(\theta)|=\Big  |\frac{(X_t-\widehat{f}_{\theta}^t)^2}{\widehat{H}_{\theta}^t}+ \log(\widehat{H}_{\theta}^t)-\frac{(X_t-f_{\theta}^t)^2}{H_{\theta}^t}-\log(H_{\theta}^t) \Big|\\
& \le (H_{\theta}^t \widehat{H}_{\theta}^t )^{-1} \big |H_{\theta}^t(X_t-\widehat{f}_{\theta}^t)^2-\widehat{H}_{\theta}^t(X_t-f_{\theta}^t)^2\big |+ \big |\log(\widehat{H}_{\theta}^t)-\log(H_{\theta}^t)\big | \\
&\le (H_{\theta}^t \widehat{H}_{\theta}^t )^{-1} \big  |(H_{\theta}^t-\widehat{H}_{\theta}^t)(X_t-f_{\theta}^t)^2 - H_{\theta}^t(X_t-f_{\theta}^t)^2+H_{\theta}^t(X_t-\widehat{f}_{\theta}^t)^2 \big |+ \big |\log(\widehat{H}_{\theta}^t)-\log(H_{\theta}^t) \big|\\
& \le \underline{h}^{-3/2} \big ( |X_t|^2+ 2|X_t\|f_{\theta}^t|+|f_{\theta}^t|^2 \big )\, \big |M_{\theta}^t-\widehat{M}_{\theta}^t\big |+\underline{h}^{-1} \big (2|X_t|+|f_{\theta}^t|+|\widehat{f}_{\theta}^t| \big ) \, \big |f_{\theta}^t-\widehat{f}_{\theta}^t \big |+ 2 \, \big  |\log(\widehat{M}_{\theta}^t)-\log(M_{\theta}^t) \big|\\
& \le \underline{h}^{-3/2}\big ( |X_t|^2+ 2|X_t| \times \|f_{\theta}^t\|_{\Theta}+\|f_{\theta}^t\|_{\Theta}^2\big ) \, \|M_{\theta}^t-\widehat{M}_{\theta}^t\|_{\Theta} \\
& \hspace{5cm} +\underline{h}^{-1} \big ( 2|X_t|+\|f_{\theta}^t\|_{\Theta}+\|\widehat{f}_{\theta}^t\|_{\Theta}\big ) \, \|f_{\theta}^t-\widehat{f}_{\theta}^t\|_{\Theta}+2 \,\underline{h}^{-1/2}  \|\widehat{M}_{\theta}^t-M_{\theta}^t\|_{\Theta}.
\end{align*}
\noindent 1/ If $X \subset {\mathcal AC}(M_\theta,f_\theta)$, we deduce
\begin{multline}
\mathbb{E}\big [ \|\widehat{q}_{t}(\theta)-q_{t}(\theta)\|_{\Theta}^{r/3} \big ]  \le C \,\Big(\mathbb{E}\Big[\big (\|X_t+f_{\theta}^t\|_{\Theta}^2 +1\big ) ^{r/3}\, \|M_{\theta}^t-\widehat{M}_{\theta}^t\|^{r/3}_{\Theta}\Big] \\
+\mathbb{E}\Big[ \big ( 2|X_t|+\|f_{\theta}^t\|_{\Theta}+ \|\widehat{f}_{\theta}^t\|_{\Theta} \big )^{r/3}\, \|f_{\theta}^t-\widehat{f}_{\theta}^t\|^{r/3}_{\Theta}\Big] \Big ).
\label{eq:ala}
\end{multline}

Then, by Hölder's inequality and \eqref{recall} we have:
\begin{multline} \label{ineg1}
\mathbb{E}\Big[\big (\|X_t+f_{\theta}^t\|_{\Theta}^2 +1\big ) ^{r/3}\, \|M_{\theta}^t-\widehat{M}_{\theta}^t\|^{r/3}_{\Theta}\Big]    \\
 \le \Big(\mathbb{E} \big [\|X_t+ f_{\theta}^t+1\|_{\Theta}^r \big ]\Big)^{2/3} \, \Big(\mathbb{E} \big [\|M_{\theta}^t-\widehat{M}_{\theta}^t\|_{\Theta}^r\big ]\Big)^{1/3}
\le C \  \Big (\mathbb{E}\big [\|M_{\theta}^t-\widehat{M}_{\theta}^t\|_{\Theta}^r \Big ]\Big)^{1/3}.
\end{multline}
Again with Hölder's inequality and \eqref{recall} ,
\begin{equation} \label{ineg2}
\mathbb{E}\big[ \big ( (2|X_t|+\|f_{\theta}^t\|_{\Theta}+\|\widehat{f}_{\theta}^t\|_{\Theta})\|f_{\theta}^t-\widehat{f}_{\theta}^t\|_{\Theta} \big )^{r/3}\big]\le C \, \big (\mathbb{E} \big [\|f_{\theta}^t-\widehat{f}_{\theta}^t\|_{\Theta}^r]\big)^{1/3}.
\end{equation}
Therefore, from \eqref{ineg1}, \eqref{ineg2} and \eqref{recall2}, there exists a constant $C$ such that
\begin{equation}
\mathbb{E} \big [ \|(\widehat{q}_{t}(\theta)-q_{t}(\theta)\|_{\Theta}^{r/3} \big ] \le C \,\Big(\sum_{j \ge t} \alpha_j (f_\theta,\Theta)+\sum_{j \ge t} \alpha_j (M_\theta,\Theta)\Big)^{r/3}.
\label{e:fond}
\end{equation}
Hence,
\[
\sum_{k\ge 1}(\frac{1}{\kappa_k})^{r/3} \mathbb{E}\big[\|\widehat{q}_k(\theta)-q_k(\theta)\|_{\Theta} ^{r/3}\big] \le C \, \sum_{k\ge 1}(\frac{1}{\kappa_k})^{r/3}\Big(\sum_{j \ge k} \alpha_j (f_\theta,\Theta)+\alpha_j (M_\theta,\Theta)\Big)^{r/3},
\]
which is finite  by assumption $K(\Theta)$, and this achieves the proof. \\

~ \\

\noindent 2/ If $X \subset \widetilde {{\mathcal AC}}(\widetilde H_\theta)$ and using Corollary 1 of \cite{kounias}, with $r\le 4$, \eqref{eq:kounias} is established when:
\begin{equation}
\sum_{k\ge 1}(\frac{1}{\kappa_k})^{r/4} \mathbb{E}\big(\|\widehat{q}_k(\theta)-q_k(\theta)\|_{\Theta} ^{r/4}\big) < \infty.
\end{equation}
By proceeding as in the previous case, we deduce
\begin{equation*}
|\widehat{q}_t(\theta)-q_t(\theta)|
\le \underline{h}^{-2} |X_t|^2 \, \|H_{\theta}^t-\widehat{H}_{\theta}^t\|_{\Theta} +\underline{h}^{-1}  \|\widehat{H}_{\theta}^t-H_{\theta}^t\|_{\Theta}.
\end{equation*}
In addition, we deduce that  there exists a constant $C$ such that
\begin{equation}
\mathbb{E} \big [ \|(\widehat{q}_{t}(\theta)-q_{t}(\theta)\|_{\Theta}^{r/4} \big ] \le C \,\Big(\sum_{j \ge t} \alpha_j (H_\theta,\Theta)\Big)^{r/4}.
\label{e:fond2}
\end{equation}
\end{proof}

\begin{lemma} \label{lem2}
Let $X \in \mathcal{AC}(M_{\theta},f_{\theta})$ (or $\widetilde{\mathcal{AC}}(\widetilde {H}_{\theta})$) and $\Theta \subseteq \Theta(r)$  (or $\Theta \subseteq \widetilde \Theta(r)$) with {$r\ge 2$}. Assume that the assumptions $D(\Theta)$ and $K(\Theta)$ (or $\widetilde K(\Theta)$) hold. Then:
\begin{equation}
\frac{1}{\kappa_n} \,\Big \|\dfrac{\partial\widehat{L}_n(\theta)}{\partial \theta}-\frac{\partial L_n(\theta)}{\partial \theta}\Big \|_{\Theta}\limiteasn 0.
\label{eq:2}
\end{equation}
\label{lem:2}
\end{lemma}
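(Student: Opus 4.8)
The plan is to mirror exactly the proof of Lemma \ref{lem1}, replacing the contrast $q_t$ by its gradient $\partial_\theta q_t$ throughout. First I would use the elementary bound
\[
\frac{1}{\kappa_n}\Big\|\frac{\partial \widehat{L}_n(\theta)}{\partial\theta}-\frac{\partial L_n(\theta)}{\partial\theta}\Big\|_\Theta \le \frac{1}{\kappa_n}\sum_{t=1}^n \Big\|\frac{\partial \widehat{q}_t(\theta)}{\partial\theta}-\frac{\partial q_t(\theta)}{\partial\theta}\Big\|_\Theta,
\]
and then invoke Corollary 1 of \cite{kounias}: with the exponent $s=\min(1,r/3)$ of assumption $K(\Theta)$ in the $\mathcal{AC}(M_\theta,f_\theta)$ case (resp. $s=\min(1,r/4)$ of $\widetilde K(\Theta)$ in the $\widetilde{\mathcal{AC}}(\widetilde H_\theta)$ case), the desired a.s. convergence follows once I establish
\[
\sum_{k\ge 1}\Big(\frac{1}{\kappa_k}\Big)^{s}\,\E\Big[\Big\|\frac{\partial \widehat{q}_k(\theta)}{\partial\theta}-\frac{\partial q_k(\theta)}{\partial\theta}\Big\|_\Theta^{s}\Big]<\infty.
\]

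Next I would differentiate $q_t(\theta)=(X_t-f_\theta^t)^2/H_\theta^t+\log H_\theta^t$ and its hatted analogue, using $\partial_\theta H_\theta^t = 2 M_\theta^t\,\partial_\theta M_\theta^t$. Each partial derivative is a finite sum of terms of the schematic form (a polynomial in $X_t, f_\theta^t, M_\theta^t$ and their $\Theta$-sup norms) times either $\partial_\theta f_\theta^t$ or $\partial_\theta M_\theta^t$, all premultiplied by negative powers of $H_\theta^t$ that are bounded thanks to $D(\Theta)$ (via $\underline{h}$). Forming the difference $\partial_\theta\widehat{q}_t-\partial_\theta q_t$ and telescoping with suitable intermediate terms (exactly as in the chain of inequalities preceding \eqref{eq:ala}), I would bound it by a finite sum of products, where each product pairs a factor of finite $r$-th moment with one of the four approximation errors
\[
\|f_\theta^t-\widehat f_\theta^t\|_\Theta,\quad \|M_\theta^t-\widehat M_\theta^t\|_\Theta,\quad \|\partial_\theta f_\theta^t-\partial_\theta\widehat f_\theta^t\|_\Theta,\quad \|\partial_\theta M_\theta^t-\partial_\theta\widehat M_\theta^t\|_\Theta.
\]

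Then I would control the moments as in \eqref{recall}--\eqref{recall2}: the uniform moment bounds \eqref{recall} extend to $\partial_\theta f_\theta^t$, $\partial_\theta M_\theta^t$ and their hatted versions under $A(\partial_\theta f_\theta,\Theta)$ and $A(\partial_\theta M_\theta,\Theta)$, and the truncation estimates \eqref{recall2} admit the direct analogues $\E[\|\partial_\theta f_\theta^t-\partial_\theta\widehat f_\theta^t\|_\Theta^r]\le C(\sum_{j\ge t}\alpha_j(\partial_\theta f_\theta,\Theta))^r$ and likewise for $M$, established in \cite{barW}. Applying H\"older's inequality term by term as in \eqref{ineg1}--\eqref{ineg2} would then produce a constant $C$ with
\[
\E\Big[\Big\|\frac{\partial \widehat{q}_t(\theta)}{\partial\theta}-\frac{\partial q_t(\theta)}{\partial\theta}\Big\|_\Theta^{r/3}\Big]\le C\Big(\sum_{j\ge t}\alpha_j(f_\theta,\Theta)+\alpha_j(M_\theta,\Theta)+\alpha_j(\partial_\theta f_\theta,\Theta)+\alpha_j(\partial_\theta M_\theta,\Theta)\Big)^{r/3},
\]
and the $\widetilde{\mathcal{AC}}$ case similarly with $\alpha_j(\widetilde H_\theta,\Theta)$ and $\alpha_j(\partial_\theta\widetilde H_\theta,\Theta)$ at exponent $r/4$. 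Summing against $(1/\kappa_k)^s$ (after passing from the $r/3$-moment bound to the $s$-moment bound by Jensen when $s<r/3$), the series is finite by precisely the summability condition built into $K(\Theta)$ (resp. $\widetilde K(\Theta)$), which already contains the derivative coefficients. The step I expect to be the main obstacle is the bookkeeping in differentiating and decomposing $\partial_\theta\widehat q_t-\partial_\theta q_t$: unlike in Lemma \ref{lem1}, the gradient produces cross terms pairing a bounded-moment factor with a \emph{derivative} approximation error, and one must check that every such term separates under H\"older so that the $r$-th moments land on the bounded factors while the truncation errors are isolated with the correct power. The essential point making this work without any new hypothesis is that $K(\Theta)$ was designed to include $A(\partial_\theta f_\theta,\Theta)$, $A(\partial_\theta M_\theta,\Theta)$ together with the matching tail coefficients $\alpha_j(\partial_\theta f_\theta,\Theta)$ and $\alpha_j(\partial_\theta M_\theta,\Theta)$.
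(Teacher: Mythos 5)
Your proposal is correct and follows essentially the same route as the paper's proof: the same reduction via the triangle inequality and Corollary 1 of the Kounias--Weng reference, the same differentiation of $q_t$ and telescoping product decomposition, the same moment and truncation bounds \eqref{recall3}--\eqref{recall4} imported from \cite{barW}, the same H\"older separations, and the same conclusion from the summability built into $K(\Theta)$ (resp.\ $\widetilde K(\Theta)$). Your explicit Jensen step to pass from the $r/3$-moment bound to the $s$-moment bound when $r>3$ is a minor tidying of a point the paper glosses over, not a different argument.
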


\begin{proof}
We will go along similar lines as in the proof of Lemma \ref{lem:1}. We have:
\[
\frac{1}{\kappa_n} \, \Big \|\dfrac{\partial\widehat{L}_n(\theta)}{\partial \theta}-\frac{\partial L_n(\theta)}{\partial \theta}\Big  \|_{\Theta}\le \frac{1}{\kappa_n} \, \sum_{t=1}^n \Big \| \dfrac{\partial \widehat{q}_t(\theta)}{\partial \theta_i}-\frac{\partial q_t(\theta)}{\partial \theta_i}\Big \|_{\Theta}. \]
Using again Corollary 1 of \cite{kounias}, it is sufficient to prove for $r\le 3$ that
\begin{equation}
\sum_{k\ge 1}(\frac{1}{\kappa_k})^{r/3}  \,\mathbb{E} \Big [\Big \| \dfrac{\partial \widehat{q}_t(\theta)}{\partial \theta_i}-\frac{\partial q_t(\theta)}{\partial \theta_i}\Big \|_{\Theta}^{r/3} \Big ] < \infty.
\label{eq:kou}
\end{equation}
For any $\theta \in \Theta$, with $H_\theta=M_\theta^2$, the first partial derivatives of $q_t(\theta)$ are
\begin{multline*}
\frac{\partial q_t(\theta)}{\partial\theta_i}= \frac{-2(X_t-f_{\theta}^t)}{H_{\theta}^t}\frac{\partial f_{\theta}^t}{\partial \theta_i}-
\frac{(X_t-f_{\theta}^t)^2}{(H_{\theta}^t)^2}\frac{\partial H_{\theta}^t}{\partial \theta_i}+\frac{1}{H_{\theta}^t}\frac{\partial H_{\theta}^t}{\partial \theta_i}\\
=-2 (H_{\theta}^t)^{-1}(X_t-f_{\theta}^t)\frac{\partial f_{\theta}^t}{\partial \theta_i}+(X_t-f_{\theta}^t)^2\frac{\partial (H_{\theta}^t)^{-1}}{\partial \theta_i}+(H_{\theta}^t)^{-1}\frac{\partial H_{\theta}^t}{\partial \theta_i},
\end{multline*}
for $i=1,\cdots,d$.
Hence,
\begin{multline*}
\Big |\frac{\partial \widehat{q}_t(\theta)}{\partial \theta_i}-\frac{\partial q_t(\theta)}{\partial \theta_i} \Big |\le 2 \,  \Big |(h_{\theta}^t)^{-1}(X_t-f_{\theta}^t)\frac{\partial f_{\theta}^t}{\partial \theta_i}-(\widehat{h}_{\theta}^t)^{-1}(X_t-\widehat{f}_{\theta}^t)\frac{\partial \widehat{f}_{\theta}^t}{\partial \theta_i} \Big |\\
+ \Big |(X_t-\widehat{f}_{\theta}^t)^2\frac{\partial (\widehat{H}_{\theta}^t)^{-1}}{\partial \theta_i}-(X_t-f_{\theta}^t)^2\frac{\partial (H_{\theta}^t)^{-1}}{\partial \theta_i} \Big | + \Big |(\widehat{H}_{\theta}^t)^{-1}\frac{\partial \widehat{H}_{\theta}^t}{\partial \theta_i}-(H_{\theta}^t)^{-1}\frac{\partial H_{\theta}^t}{\partial \theta_i} \Big |.
\end{multline*}
Then, using
$|a_1b_1c_1-a_2b_2c_2|\le |a_1-a_2| \,|b_2| \,|c_2|+|a_1|\, |b_1-b_2|\,|c_2|+|a_1|\,|b_1|\,|c_1-c_2|$ for any $a_1,a_2,b_1,b_2,c_1,c_2$ in $\R$, we obtain
\begin{align*}
\Big |\frac{\partial \widehat{q}_t(\theta)}{\partial \theta_i}-\frac{\partial q_t(\theta)}{\partial \theta_i} \Big |& \le 2 \,\Big ( \big |(H_{\theta}^t)^{-1}-(\widehat{H}_{\theta}^t)^{-1} \big | \times \big |X_t-\widehat{f}_{\theta}^t\big | \, \Big |\frac{\partial \widehat{f}_{\theta}^t}{\partial \theta_i}\Big|+\big|(H_{\theta}^t)^{-1}\big | \times \big | \widehat{f}_{\theta}^t -f_{\theta}^t\big | \, \Big | \frac{\partial \widehat{f}_{\theta}^t}{\partial \theta_i}\Big|\\
&+\big |(H_{\theta}^t)^{-1} \big | \times \big |X_t-f_{\theta}^t\big | \, \Big | \frac{\partial f_{\theta}^t}{\partial \theta_i}-\frac{\partial \widehat{f}_{\theta}^t}{\partial \theta_i} \Big | \Big )+ \big |X_t-\widehat{f}_{\theta}^t \big |^2 \, \Big|\frac{\partial (\widehat{H}_{\theta}^t)^{-1}}{\partial \theta_i}-\frac{\partial (H_{\theta}^t)^{-1}}{\partial \theta_i}\Big|\\
&+ 2 \,\Big |\frac{\partial (H_{\theta}^t)^{-1}}{\partial \theta_i} \Big | \, \big | X_t \big | \, \big |f_{\theta}^t-\widehat{f}_{\theta}^t\big |+\big |(\widehat{H}_{\theta}^t)^{-1} \big | \, \Big | \frac{\partial \widehat{H}_{\theta}^t}{\partial \theta_i}-\frac{\partial H_{\theta}^t}{\partial \theta_i}\Big|+ \Big |\frac{\partial H_{\theta}^t}{\partial \theta_i} \Big | \, \big |(\widehat{H}_{\theta}^t)^{-1}-(H_{\theta}^t)^{-1} \big |.
\end{align*}
Thus,
\begin{align*}
&\Big \| \dfrac{\partial \widehat{q}_t(\theta)}{\partial \theta_i}-\frac{\partial q_t(\theta)}{\partial \theta_i}\Big \|_{\Theta} \le 2\, \underline{h}^{-1}\Big (\big \|\widehat{f}_{\theta}^t -f_{\theta}^t \big \|_{\Theta} \Big \|\dfrac{\partial \widehat{f}_{\theta}^t}{\partial \theta_i}\Big \|_{\Theta}+\big \|X_t-f_{\theta}^t\big  \|_{\Theta} \Big \|\dfrac{\partial f_{\theta}^t}{\partial \theta_i}-\frac{\partial \widehat{f}_{\theta}^t}{\partial \theta_i} \Big \|_{\Theta}  \Big )\\
&\hspace{0.5cm} + 2\, \big \|(H_{\theta}^t)^{-1}-(\widehat{H}_{\theta}^t)^{-1}\big  \|_{\Theta} \big  \|X_t-\widehat{f}_{\theta}^t \big \|_{\Theta}\Big \|\dfrac{\partial \widehat{f}_{\theta}^t}{\partial \theta_i}\Big \|_{\Theta}+
\big  \|X_t-\widehat{f}_{\theta}^t\big  \|^2 \Big \|\dfrac{\partial (\widehat{H}_{\theta}^t)^{-1}}{\partial \theta_i}-\frac{\partial (H_{\theta}^t)^{-1}}{\partial \theta_i}\Big \|\\
&\hspace{0.5cm} + 2 \,\big |X_t\big | \,\big \|f_{\theta}^t-\widehat{f}_{\theta}^t \big \|_{\Theta}  \Big \|\frac{\partial (H_{\theta}^t)^{-1}}{\partial \theta_i}\Big \|_{\Theta}+
\big  \|(\widehat{H}_{\theta}^t)^{-1}\big  \|_{\Theta} \Big \|\dfrac{\partial \widehat{H}_{\theta}^t}{\partial \theta_i}-\frac{\partial H_{\theta}^t}{\partial \theta_i}\Big \|_{\Theta}+\big \|(\widehat{H}_{\theta}^t)^{-1}-(H_{\theta}^t)^{-1}\|_{\Theta} \Big \|\frac{\partial H_{\theta}^t}{\partial \theta_i}\Big \|_{\Theta} .
\end{align*}
Using again the results of \cite{barW}, we know that:
\begin{eqnarray}\label{recall3}
&& \hspace{-1.5cm}\bullet \quad \E \Big [\Big \|\frac{\partial f_{\theta}^t}{\partial \theta_i}\Big \|_{\Theta}^r\hspace{-2mm} +\Big \|\frac{\partial \widehat{f}_{\theta}^t}{\partial \theta_i}\Big \|_{\Theta}^r\hspace{-2mm}+\Big \|\frac{\partial M_{\theta}^t}{\partial \theta_i}\Big \|_{\Theta}^r \hspace{-2mm}+\Big \|\frac{\partial \widehat{M}_{\theta}^t}{\partial \theta_i}\Big \|_{\Theta}^r\hspace{-2mm} +\Big \|\frac{\partial H_{\theta}^t}{\partial \theta_i}\Big \|_{\Theta}^{r/2}\hspace{-2mm} +\Big \|\frac{\partial (H_{\theta}^t)^{-1}}{\partial \theta_i}\Big \|_{\Theta}^{r}\Big ] <\infty \\
\label{recall4} && \hspace{-1.5cm}\bullet \quad \left \{ \begin{array}{l} \displaystyle  \E \big [ \big \|  (H_{\theta}^t)^{-1}-(\widehat{H}_{\theta}^t)^{-1} \big \|^r _{\Theta}\big ]\leq  C \,\Big ( \sum_{j \ge t} \alpha_j (M_\theta,\Theta) \Big )^r\\
\displaystyle  \E \Big [ \Big \| \frac{\partial f_{\theta}^t}{\partial \theta_i}-\frac{\partial \widehat{f}_{\theta}^t}{\partial \theta_i} \Big \|^r _{\Theta}\Big ]\leq  C \,\Big ( \sum_{j \ge t} \alpha_j (\partial f_\theta,\Theta) \Big )^r\\
 \displaystyle  \E \Big [ \Big \| \frac{\partial H_{\theta}^t}{\partial \theta_i}-\frac{\partial \widehat{H}_{\theta}^t}{\partial \theta_i} \Big \|^{r/2} _{\Theta}\Big ]\leq  C \,\Big (  \sum_{j \ge t} \big (\alpha_j (M_\theta,\Theta)+ \alpha_j (\partial M_\theta,\Theta) \big )\Big )^{r/2}\\
\displaystyle  \E \Big [ \Big \| \frac{\partial (H_{\theta}^t)^{-1}}{\partial \theta_i}-\frac{\partial (\widehat{H}_{\theta}^t)^{-1}}{\partial \theta_i} \Big \|^{r/2} _{\Theta}\Big ]\leq  C \,\Big (  \sum_{j \ge t} \big (\alpha_j (M_\theta,\Theta)+\alpha_j (\partial M_\theta,\Theta) \big )\Big )^{r/2}
\end{array} \right .
\end{eqnarray}
1. If $X \subset {\mathcal AC}(M_\theta,f_\theta)$, we deduce from the Hölder's Inequality that,
\begin{align*}
&\mathbb{E}\Big [ \Big \| \dfrac{\partial \widehat{q}_t(\theta)}{\partial \theta_i}-\frac{\partial q_t(\theta)}{\partial \theta_i}\Big \|_{\Theta}^{r/3} \Big ] \le C\, \Big [ \big ( \mathbb{E} \big [ \big \|\widehat{f}_{\theta}^t -f_{\theta}^t \big \|_{\Theta}^{r} \big ] \big )^{1/3} \Big (\mathbb{E}\Big [ \Big \|\dfrac{\partial \widehat{f}_{\theta}^t}{\partial \theta_i}\Big \|_{\Theta}^{r/2}\Big ] \Big )^{2/3}\\
&+\big ( \mathbb{E} \big [ \big \|X_t -f_{\theta}^t \big \|_{\Theta}^{2r/3} \big ] \big )^{1/2} \Big (\mathbb{E}\Big [ \Big \|\dfrac{\partial f_{\theta}^t}{\partial \theta_i}-\frac{\partial \widehat{f}_{\theta}^t}{\partial \theta_i} \Big \|_{\Theta}^{r}\Big ] \Big )^{1/3} \\
&+
\big (\mathbb{E}\big [ \big \|(H_{\theta}^t)^{-1}-(\widehat{H}_{\theta}^t)^{-1} \big\|_{\Theta}^{r} \big ] \big )^{1/3}\,\Big (  \mathbb{E} \big [ \big \|X_t-\widehat{f}_{\theta}^t \big \|_{\Theta}^{r} \big ]\, \mathbb{E}\Big [ \Big \|\dfrac{\partial \widehat{f}_{\theta}^t}{\partial \theta_i}\Big \|_{\Theta}^{r} \Big ]\Big)^{1/3}\\
&+
\big (\mathbb{E} \big [ \big \|X_t-\widehat{f}_{\theta}^t \big \|_{\Theta}^{r} \big ] \Big )^{1/3} \Big ( \mathbb{E} \Big [\Big \|\dfrac{\partial (\widehat{H}_{\theta}^t)^{-1}}{\partial \theta_i}-\frac{\partial (H_{\theta}^t)^{-1}}{\partial \theta_i}\Big \|^{r/2} \Big ] \Big )^{2/3} \\
& +\Big(\mathbb{E}\Big [\Big \|\frac{\partial (H_{\theta}^t)^{-1}}{\partial \theta_i}\Big \|_{\Theta}^{r} \Big ]\Big)^{1/3}\hspace{-1mm} \Big (\mathbb{E} \big [|X_t|^{r} \big ] \, \mathbb{E} \big [\big \|f_{\theta}^t-\widehat{f}_{\theta}^t \big \|_{\Theta}^{r} \big ]\Big)^{1/3}\\
&+ \Big (\mathbb{E}\Big [\Big \|\dfrac{\partial \widehat{H}_{\theta}^t}{\partial \theta_i}-\frac{\partial H_{\theta}^t}{\partial \theta_i}\Big \|_{\Theta}^{r/3} \Big ] + \Big(\mathbb{E}\Big [ \Big \|\frac{\partial H_{\theta}^t}{\partial \theta_i}\Big \|_{\Theta}^{r/2} \Big ] \Big )^{2/3} \big (\mathbb{E} \big [ \big \|(\widehat{H}_{\theta}^t)^{-1}-(H_{\theta}^t)^{-1} \big \|_{\Theta}^{r} \big ] \Big )^{1/3}\Big ].
\end{align*}

Using \eqref{recall3} and \eqref{recall4}, we deduce
\begin{multline*}
\mathbb{E}\Big [ \Big \| \dfrac{\partial \widehat{q}_t(\theta)}{\partial \theta_i}-\frac{\partial q_t(\theta)}{\partial \theta_i}\Big \|_{\Theta}^{r/3} \Big ] \le C\, \Big(\sum_{j \ge t} \alpha_j (f_\theta,\Theta)+ \alpha_j (M_\theta,\Theta)+  \alpha_j (\partial  f_\theta,\Theta)+ \alpha_j (\partial  M_\theta,\Theta)\Big)^{r/3}.
\end{multline*}
Therefore,
\begin{multline*}
\sum_{k\ge 1}\frac{1}{\kappa^{r/3}_k}\,  \mathbb{E}\Big [ \Big \| \dfrac{\partial \widehat{q}_k(\theta)}{\partial \theta_i}-\frac{\partial q_k(\theta)}{\partial \theta_i}\Big \|_{\Theta}^{r/3} \Big ] \\
 \le C \, \sum_{k\ge 1}\frac{1}{\kappa^{r/3} _k}\Big(\sum_{j \ge t} \alpha_j (f_\theta,\Theta)+ \alpha_j (M_\theta,\Theta)+  \alpha_j (\partial  f_\theta,\Theta)+ \alpha_j (\partial  M_\theta,\Theta)\Big)^{r/3}.
\label{eq:lem22}
\end{multline*}
We conclude the proof of \eqref{eq:kou} from assumption $K(\Theta)$.\\
~\\
2. If $X \subset \widetilde {{\mathcal AC}}(\widetilde H_\theta)$, we deduce
\begin{multline*}
\Big \| \dfrac{\partial \widehat{q}_t(\theta)}{\partial \theta_i}-\frac{\partial q_t(\theta)}{\partial \theta_i}\Big \|_{\Theta} \le
\big  |X_t\big  |^2 \, \Big \|\dfrac{\partial (\widehat{H}_{\theta}^t)^{-1}}{\partial \theta_i}-\frac{\partial (H_{\theta}^t)^{-1}}{\partial \theta_i}\Big \|_{\Theta} \\+
 \underline{h}^{-1}\Big \|\dfrac{\partial \widehat{H}_{\theta}^t}{\partial \theta_i}-\frac{\partial H_{\theta}^t}{\partial \theta_i}\Big \|_{\Theta}+\big \|(\widehat{H}_{\theta}^t)^{-1}-(H_{\theta}^t)^{-1}\|_{\Theta} \Big \|\frac{\partial H_{\theta}^t}{\partial \theta_i}\Big \|_{\Theta} .
\end{multline*}
As a consequence,
\begin{multline*}
\E \Big [ \Big \| \dfrac{\partial \widehat{q}_t(\theta)}{\partial \theta_i}-\frac{\partial q_t(\theta)}{\partial \theta_i}\Big \|_{\Theta} ^{r/4} \Big ]\le
\Big ( \E \big [\big  |X_t\big  |^r \, \E \Big [ \Big \|\dfrac{\partial (\widehat{H}_{\theta}^t)^{-1}}{\partial \theta_i}-\frac{\partial (H_{\theta}^t)^{-1}}{\partial \theta_i}\Big \|^{r/2}_{\Theta} \Big ] \Big )^{1/2} \\
+ \underline{h}^{-r/4}\E \Big [  \Big \|\dfrac{\partial \widehat{H}_{\theta}^t}{\partial \theta_i}-\frac{\partial H_{\theta}^t}{\partial \theta_i}\Big \|^{r/4}_{\Theta} \Big ] + \Big ( \E \big  [ \big \|(\widehat{H}_{\theta}^t)^{-1}-(H_{\theta}^t)^{-1}\|^{r/2}_{\Theta}  \big ] \, \E \Big [  \Big \|\frac{\partial H_{\theta}^t}{\partial \theta_i}\Big \|^{r/2}_{\Theta} \Big ] \Big )^{1/2},
\end{multline*}
implying
\begin{equation*}
\E \Big [ \Big \| \dfrac{\partial \widehat{q}_t(\theta)}{\partial \theta_i}-\frac{\partial q_t(\theta)}{\partial \theta_i}\Big \|_{\Theta} ^{r/4} \Big ]\le  C \, \Big ( \sum_{j \ge t} \alpha_j (H_\theta,\Theta)+  \alpha_j (\partial  H_\theta,\Theta)\Big)^{r/4},
\end{equation*}
which achieves the proof, according to Corollary 1 of \cite{kounias}.
\end{proof}

 \medskip

\begin{lemma} \label{lem3}
Under the assumptions of Theorem \ref{theo:1} and if a model $m\in\mathcal{M}$ is such that  $\theta^*\in \Theta(m)$, then:
\begin{equation}
\frac{1}{\kappa_n} \,\big |\widehat{L}_n(\widehat{\theta}(m))-\widehat{L}_n(\theta^*) \big |=o_P(1).
\end{equation}
\label{lem:3}
\end{lemma}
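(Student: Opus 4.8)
The plan is to prove the stronger fact that the excess quasi-likelihood $\widehat{L}_n(\widehat{\theta}(m))-\widehat{L}_n(\theta^*)$ is bounded in probability, i.e. $O_P(1)$; since $\kappa_n\to\infty$, dividing by $\kappa_n$ then gives the claim. First I would record that $\widehat{\theta}(m)$ maximises $\widehat{L}_n$ over $\Theta(m)$ and that, because $\theta^*\in\Theta(m)$, the observed process also lies in $\mathcal{AC}(M_{\theta^*},f_{\theta^*})$ with $\theta^*$ an element of the submodel. Hence the new versions of Theorems~1 and~2 in \cite{barW} apply verbatim with $m$ in place of $m^*$, yielding both consistency ($\widehat{\theta}(m)\to\theta^*$ almost surely) and the $\sqrt{n}$-rate $(\widehat{\theta}(m)-\theta^*)_{i\in m}=O_P(n^{-1/2})$.

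Next I would exploit the local quadratic structure of $\widehat{L}_n$ at $\theta^*$. By consistency, $\widehat{\theta}(m)$ eventually lies in the relative interior of $\Theta(m)$, so the first-order condition $\partial_{\theta_m}\widehat{L}_n(\widehat{\theta}(m))=0$ holds (derivatives taken in the coordinates $i\in m$, the remaining ones being fixed to $0$ in both $\widehat{\theta}(m)$ and $\theta^*$). A second-order Taylor expansion around $\theta^*$ then gives, writing $u:=(\widehat{\theta}(m)-\theta^*)_{i\in m}$ and for some $\bar\theta$ on the segment $[\theta^*,\widehat{\theta}(m)]$,
\[
\widehat{L}_n(\widehat{\theta}(m))-\widehat{L}_n(\theta^*)=\big\langle \partial_{\theta_m}\widehat{L}_n(\theta^*),\, u\big\rangle+\tfrac12\big\langle u,\, \partial^2_{\theta_m}\widehat{L}_n(\bar\theta)\,u\big\rangle.
\]
The two ingredients controlling the right-hand side are exactly those established in the proof of the asymptotic normality result: the score at the truth satisfies $\partial_{\theta_m}\widehat{L}_n(\theta^*)=O_P(\sqrt{n})$ (a central limit theorem for the stationary ergodic summands, valid since $r\geq4$), while $-\tfrac1n\partial^2_{\theta_m}\widehat{L}_n(\bar\theta)$ converges uniformly, as $\bar\theta\to\theta^*$, to the invertible information-type matrix $F(\theta^*,m)$, so that $\partial^2_{\theta_m}\widehat{L}_n(\bar\theta)=O_P(n)$. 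Combining these with $u=O_P(n^{-1/2})$ bounds the linear term by $O_P(\sqrt{n})\cdot O_P(n^{-1/2})=O_P(1)$ and the quadratic term by $O_P(n)\cdot O_P(n^{-1})=O_P(1)$, whence $\widehat{L}_n(\widehat{\theta}(m))-\widehat{L}_n(\theta^*)=O_P(1)$ and the lemma follows.

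I expect the main obstacle to be the rigorous justification of this local quadratic expansion for $\widehat{L}_n$, rather than the routine algebra, since it is there that the moment condition $r\geq4$, the differentiability assumptions A$(\partial^2_{\theta^2}f_\theta,\Theta)$, A$(\partial^2_{\theta^2}M_\theta,\Theta)$, and the non-degeneracy of the information matrix enter. In practice I would transfer the analysis to the ideal likelihood $L_n$ (built on the true infinite past) by means of Lemmas~\ref{lem1} and~\ref{lem2}, which give $\frac1{\kappa_n}\|\widehat{L}_n-L_n\|_\Theta\to0$ and $\frac1{\kappa_n}\|\partial_\theta\widehat{L}_n-\partial_\theta L_n\|_\Theta\to0$ almost surely: this lets me replace $\widehat{L}_n$ by $L_n$ in both the score and the Hessian up to an additive $o(\kappa_n)$ error, after which the stationary-ergodic CLT and the uniform law of large numbers apply cleanly to the summands $q_t$ and their derivatives. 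A delicate point worth checking is that the expansion lives on the submodel $\Theta(m)$, so only the $|m|$ active coordinates are involved and the relevant Hessian block $F(\theta^*,m)$ must be invertible there, which is precisely what allows passing from the $O_P(\sqrt{n})$ score to the $O_P(n^{-1/2})$ parameter rate.
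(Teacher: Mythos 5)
Your argument is, in substance, the paper's own proof: reduce to the ideal likelihood $L_n$, then control a second-order Taylor expansion using the $\sqrt{n}$-rate of the QMLE together with the score and Hessian asymptotics from \cite{barW}. The choice of expansion point is immaterial: the paper expands $L_n$ around $\widehat{\theta}(m)$ and uses $\frac{1}{\sqrt{n}}\,\partial_\theta L_n(\widehat{\theta}(m))=o_P(1)$, where you expand around $\theta^*$ and invoke the score CLT; note also that the first-order condition $\partial_{\theta_m}\widehat{L}_n(\widehat{\theta}(m))=0$ you mention is never actually used in your expansion (and would require an interiority hypothesis that Theorem \ref{theo:1} does not make), so you can simply drop it.

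The one substantive correction concerns your headline claim: the \emph{stronger fact} $\widehat{L}_n(\widehat{\theta}(m))-\widehat{L}_n(\theta^*)=O_P(1)$ is not obtainable with the tools at hand, and the paper does not assert it. Under $K(\Theta)$ the only available control of $\widehat{L}_n-L_n$ and of $\partial_\theta\widehat{L}_n-\partial_\theta L_n$ is at the scale $o(\kappa_n)$ a.s. (Lemmas \ref{lem:1} and \ref{lem:2}), and $\kappa_n$ may grow faster than $\sqrt{n}$ (the Riemannian regime with $\kappa_n=n^{2/3}$ is precisely the case motivating the paper). Hence $\partial_\theta\widehat{L}_n(\theta^*)$ is only $O_P(\sqrt{n})+o_P(\kappa_n)$, so your linear term is $O_P(1)+o_P(\kappa_n/\sqrt{n})$, not $O_P(1)$; moreover Lemma \ref{lem:2} says nothing about second derivatives, so $\partial^2_{\theta^2}\widehat{L}_n(\bar\theta)=O_P(n)$ is not covered by the paper's lemmas either (the paper sidesteps this by expanding $L_n$, whose Hessian convergence $-\frac{2}{n}\,\partial^2_{\theta^2}L_n(\bar\theta)\to F(\theta^*,m)$ is imported from \cite{barW}). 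Your final paragraph already contains the correct repair: perform the transfer to $L_n$ at the level of Lemma \ref{lem:1}, accept additive $o_P(\kappa_n)$ errors, prove that the $L_n$-difference is $O_P(1)$, and conclude after dividing by $\kappa_n$. With the intermediate claim weakened accordingly ($O_P(1)+o_P(\kappa_n)$ rather than $O_P(1)$), your proof and the paper's coincide.
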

\begin{proof}
We have:
\begin{align*}
\frac{1}{\kappa_n} \, \big |\widehat{L}_n(\widehat{\theta}(m))-\widehat{L}_n(\theta^*)\big |&= \frac{1}{\kappa_n}\, \big|\widehat{L}_n(\widehat{\theta}(m))- L_n(\widehat{\theta}(m))+L_n(\widehat{\theta}(m))- L_n(\theta^*) +L_n(\theta^*)-\widehat{L}_n(\theta^*)\big|\\
&\le  \frac{2}{\kappa_n} \, \big \|\widehat{L}_n(\theta)-L_n(\theta) \big \|_{\Theta(r)} + \frac{1}{\kappa_n} \, \big |L_n(\widehat{\theta}(m))-L_n(\theta^*)\big |.
\end{align*}
According to Lemma \ref{lem:1}, $\frac{1}{\kappa_n}\, \big \|\widehat{L}_n(\theta)-L_n(\theta) \big \|_{\Theta(r)}\limiteasn 0$. The proof will be achieved if we prove
\begin{equation}
\frac{1}{\kappa_n} \, \big |L_n(\widehat{\theta}(m))-L_n(\theta^*) \big |=o_P(1).
\label{eq:b}
\end{equation}
Applying a second order Taylor expansion of $L_n$ around $\widehat{\theta}(m)$ for $n$ sufficiently large such that $\overline{\theta}(m) \in \Theta(m)$ which are between $\widehat{\theta}(m)$ and $\theta^*$, yields:
\begin{multline} \label{dtheta}
\frac{1}{\kappa_n}\big(L_n(\widehat{\theta}(m))-L_n(\theta^*)\big)=\\
\frac{1}{\kappa_n}\big(\widehat{\theta}(m)-\theta^*\big) \frac{\partial L_n(\widehat{\theta}(m))}{\partial \theta}+\frac{1}{2\kappa_n}\big(\widehat{\theta}(m)-\theta^*\big)'\, \frac{\partial^2 L_n(\overline{\theta}(m))}{\partial \theta^2} \, \big(\widehat{\theta}(m)-\theta^*\big).
\, \end{multline}
Let us deal first with the first term on the right hand side of last equality:
\[
\frac{1}{\kappa_n}\, \big (\widehat{\theta}(m)-\theta^* \big ) \,  \frac{\partial L_n(\widehat{\theta}(m))}{\partial \theta}=\frac{1}{\kappa_n} \, \sqrt{n} \big (\widehat{\theta}(m)-\theta^* \big ) \, \frac{1}{\sqrt{n}} \, \frac{\partial L_n(\widehat{\theta}(m))}{\partial \theta}.
\]
Since $\frac{1}{\kappa_n}=o(1)$ and from \cite{barW} we have $\sqrt{n}\big (\widehat{\theta}(m)-\theta^*\big )=O_P(1)$ and $\frac{1}{\sqrt{n}} \, \frac{\partial L_n(\widehat{\theta}(m))}{\partial \theta}=o_P(1)$, it follows that:
\begin{equation} \label{dL1}
\frac{1}{\kappa_n} \, \big (\widehat{\theta}(m)-\theta^* \big ) \frac{\partial L_n(\widehat{\theta}(m))}{\partial \theta}=o_P(1).
\end{equation}
On the other hand, for the second term of the right hand side of equality \eqref{dtheta}, let us note that, we have from \cite{barW}:
\begin{itemize}
\item $\sqrt{n} \, \big (\widehat{\theta}(m)-\theta^* \big )\limiteloin  \mathcal{A}_{\theta^*,m}$ a Gaussian random variable from \eqref{tlcqmle}.
\item $\displaystyle -\frac{2}{n} \,\Big ( \frac{\partial^2 L_n(\overline{\theta}(m))}{\partial \theta_i \partial \theta_j} \Big )_{i,j \in m} \limiteasn F({\theta}^*,m)~$ since $\widehat{\theta}(m) \limiteasn \theta^*$ and using the assumption Var($\Theta$) insuring that the matrix $F(\theta^*,m)$ exists and is definite positive (see \cite{barW}).
\end{itemize}
Hence,
\begin{multline*}
\big(\widehat{\theta}(m)-\theta^*\big)'  \,\Big ( \frac{\partial^2 L_n(\overline{\theta}(m))}{\partial \theta_i \partial \theta_j} \Big )_{i,j \in m}(\widehat{\theta}(m)-\theta^*)\\
=\frac{-1}{2}\sqrt{n}\big (\widehat{\theta}(m)-\theta^*\big )'\,\big (F(\theta^*,m)+o_P(1) \big ) \, \sqrt{n}\big (\widehat{\theta}(m)-\theta^* \big )\\
\limiteproban \frac{-1}{2}\,\mathcal{A}_{\theta^*,m}'\, F(\theta^*,m) \,\mathcal{A}_{\theta^*,m}.
\end{multline*}
We deduce that
\begin{multline}
\big(\widehat{\theta}(m)-\theta^*\big)'  \,\Big ( \frac{\partial^2 L_n(\overline{\theta}(m))}{\partial \theta_i \partial \theta_j} \Big )_{i,j \in m}(\widehat{\theta}(m)-\theta^*)=O_P(1) \\
\Longrightarrow \quad \frac{1}{\kappa_n} \, \big(\widehat{\theta}(m)-\theta^*\big)'  \,\Big ( \frac{\partial^2 L_n(\overline{\theta}(m))}{\partial \theta_i \partial \theta_j} \Big )_{i,j \in m}(\widehat{\theta}(m)-\theta^*)=o_P(1).
\label{eq:xx}
\end{multline}
 Thus,  \eqref{eq:b} follows from \eqref{dtheta}, \eqref{dL1} and \eqref{eq:xx}; which completes the proof of Lemma \ref{lem:3}.
\end{proof}

\subsection{Misspecified model}
When a model $m$ is misspecified, we will show that $\mathbb{P}(\widehat{m}=m^*) \limiten 0$ following the same scheme of proof than in \cite{white2}. Before dealing with this proof, we state some useful results.
\begin{proposition}\label{prop:1}
Let $X \in \mathcal{AC}(M_{\theta},f_{\theta})$ (or $\widetilde{\mathcal{AC}}(\widetilde {H}_{\theta})$) and $\Theta \subseteq \Theta(r)$  (or $\Theta \subseteq \widetilde \Theta(r)$) with $r\ge 2$. Then, when the assumption $D(\Theta)$ holds,
\begin{equation}\label{defL}
\Big \| \frac 1 n \,  L_n(\theta)-L(\theta) \Big \|_{\Theta} \limiteasn 0\quad \mbox{with}\quad L(\theta):=-\frac 1 2 \,\E [q_0(\theta)].
\end{equation}
\end{proposition}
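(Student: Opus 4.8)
The plan is to obtain \eqref{defL} as a consequence of a uniform ergodic theorem (a uniform strong law of large numbers for stationary ergodic sequences of continuous functions), applied to the $\mathcal C(\Theta)$-valued sequence $\big(q_t(\cdot)\big)_{t}$. Since
\[
\frac 1 n \, L_n(\theta)-L(\theta)=-\frac 1 2 \Big(\frac 1 n \sum_{t=1}^n q_t(\theta)-\E[q_0(\theta)]\Big),
\]
it is enough to prove $\big\|\frac 1 n \sum_{t=1}^n q_t-\E[q_0]\big\|_{\Theta}\limiteasn 0$. As $\Theta\subseteq\Theta(r)$ ensures that $X$ is stationary and ergodic with $\|X_0\|_r<\infty$, and as each $q_t(\theta)$ is one and the same measurable functional of the shifted path $(X_{t},X_{t-1},\ldots)$, the sequence $\big(q_t(\cdot)\big)_t$ is stationary and ergodic in $\mathcal C(\Theta)$. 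The uniform ergodic theorem then applies once two points are checked: (i) $\theta\mapsto q_0(\theta)$ is almost surely continuous on the compact set $\Theta$, and (ii) $\E\big[\|q_0\|_{\Theta}\big]<\infty$.

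Point (i) is immediate from the continuity in $\theta$ of $f_\theta^0$ and $H_\theta^0$ together with Assumption $D(\Theta)$, which forces $H_\theta^0\ge\underline{h}>0$; hence $\theta\mapsto (X_0-f_\theta^0)^2/H_\theta^0$ and $\theta\mapsto\log H_\theta^0$ are continuous, and so is $q_0$.

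The core of the proof is the integrability (ii), and this is the step I expect to require the most care. Using $H_\theta^0\ge\underline{h}$ I would bound
\[
\|q_0\|_{\Theta}\le \underline{h}^{-1}\big(|X_0|+\|f_\theta^0\|_{\Theta}\big)^2+\big\|\log H_\theta^0\big\|_{\Theta}\le 2\,\underline{h}^{-1}\big(X_0^2+\|f_\theta^0\|_{\Theta}^2\big)+\big|\log\underline{h}\big|+\|H_\theta^0\|_{\Theta},
\]
where for the logarithm I use $\log\underline{h}\le\log H_\theta^0\le H_\theta^0$ (valid since $H_\theta^0\ge\underline{h}$ and $\log x\le x$); this is the only place where the absence of an a priori upper bound on $\log H_\theta^0$ must be circumvented. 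Because $r\ge 2$, the stationarity moment bounds recalled in \eqref{recall} give $\E[X_0^2]<\infty$, $\E[\|f_\theta^0\|_{\Theta}^2]<\infty$ and, from $\E[\|H_\theta^0\|_{\Theta}^{r/2}]<\infty$ with $r/2\ge 1$, also $\E[\|H_\theta^0\|_{\Theta}]<\infty$. Summing these yields $\E[\|q_0\|_{\Theta}]<\infty$.

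With (i) and (ii) in hand, the uniform ergodic theorem delivers $\big\|\frac 1 n \sum_{t=1}^n q_t-\E[q_0]\big\|_{\Theta}\limiteasn 0$, which is exactly \eqref{defL}. The case $X\in\widetilde{\mathcal{AC}}(\widetilde H_\theta)$ is identical after setting $f_\theta\equiv 0$ and replacing $H_\theta^t$ by $\widetilde H_\theta\big((X_{t-i}^2)_{i\ge 1}\big)$: the same domination holds with $\|\widetilde H_\theta^0\|_{\Theta}$ in place of $\|H_\theta^0\|_{\Theta}$, using $\|X_0\|_r<\infty$ for $\Theta\subseteq\widetilde\Theta(r)$.
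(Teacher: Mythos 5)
Your proof is correct and takes essentially the same route as the paper: the paper's proof is simply a deferral to Theorem 1 of \cite{barW}, whose argument for this step is exactly yours --- the uniform ergodic theorem for the stationary ergodic $\mathcal{C}(\Theta)$-valued sequence $(q_t)_t$, combined with the integrability bound $\E\big[\|q_0\|_\Theta\big]<\infty$ deduced from $D(\Theta)$ and the moment bounds recalled in \eqref{recall}. Your domination of the logarithmic term via $\log\underline{h}\le\log H^0_\theta\le H^0_\theta$ and the reduction of the $\widetilde{\mathcal{AC}}(\widetilde H_\theta)$ case faithfully reproduce that argument.
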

\begin{proof}
See the proof of Theorem 1 in \cite{barW}.
\end{proof}

\begin{lemma}\label{lem:5}
Under the assumptions of Theorem \ref{theo:1} and for $m \in \mathcal{M}$ such as $m^*\subset m$, then:
\begin{equation}
L_n(\widehat{\theta}(m))-L_n(\theta^*)=O_P(1).
\label{eq:lem1}
\end{equation}
\end{lemma}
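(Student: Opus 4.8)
The plan is to exploit that $m^*\subset m$ forces every nonzero coordinate of $\theta^*$ to lie in $m$, hence $\theta^*\in\Theta(m)$; thus the model $m$ is well-specified, and both $\theta^*$ and the maximizer $\widehat\theta(m)$ belong to $\Theta(m)$. Since Id$(\Theta)$ holds on all of $\Theta$ and therefore on $\Theta(m)\subset\Theta$, the QMLE theory of \cite{barW} (in the sharpened form of the new versions of Theorems 1 and 2 stated above) applies verbatim with $m^*$ replaced by $m$: in particular $\widehat\theta(m)\limiteasn\theta^*$ and $\sqrt n\,\big(\widehat\theta(m)-\theta^*\big)=O_P(1)$.

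I would then repeat the Taylor argument of the proof of Lemma \ref{lem:3}, but without the factor $1/\kappa_n$. For $n$ large enough there is an intermediate point $\overline\theta(m)\in\Theta(m)$, lying between $\widehat\theta(m)$ and $\theta^*$, such that
\[
L_n(\widehat\theta(m))-L_n(\theta^*)=\big(\widehat\theta(m)-\theta^*\big)'\,\frac{\partial L_n(\widehat\theta(m))}{\partial\theta}+\frac12\,\big(\widehat\theta(m)-\theta^*\big)'\,\frac{\partial^2 L_n(\overline\theta(m))}{\partial\theta^2}\,\big(\widehat\theta(m)-\theta^*\big).
\]
For the linear term I would factor out $\sqrt n$, writing it as $\big(\sqrt n(\widehat\theta(m)-\theta^*)\big)'\cdot\frac{1}{\sqrt n}\,\frac{\partial L_n(\widehat\theta(m))}{\partial\theta}$, and combine $\sqrt n(\widehat\theta(m)-\theta^*)=O_P(1)$ with $\frac{1}{\sqrt n}\,\frac{\partial L_n(\widehat\theta(m))}{\partial\theta}=o_P(1)$ (both from \cite{barW}); the term is therefore $o_P(1)$. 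For the quadratic term I would write it as $\frac12\big(\sqrt n(\widehat\theta(m)-\theta^*)\big)'\big(\tfrac1n\tfrac{\partial^2 L_n(\overline\theta(m))}{\partial\theta^2}\big)\big(\sqrt n(\widehat\theta(m)-\theta^*)\big)$ and use $-\tfrac2n\,\tfrac{\partial^2 L_n(\overline\theta(m))}{\partial\theta^2}\limiteasn F(\theta^*,m)$, which holds because $\overline\theta(m)$ is squeezed between $\widehat\theta(m)$ and $\theta^*$ (both converging a.s.\ to $\theta^*$) and the Hessian converges uniformly on $\Theta$, the limit being finite and positive definite under Var($\Theta$). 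Hence this term is $O_P(1)$, and \eqref{eq:lem1} follows by summing the two bounds.

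The only nonroutine point is the transfer, just invoked, of the a.s.\ consistency, the $\sqrt n$-asymptotic normality and the Hessian convergence of the QMLE from the true model $m^*$ to the larger model $m$. This is exactly where the hypothesis $m^*\subset m$ is essential: it guarantees $\theta^*\in\Theta(m)$, so $m$ is well-specified and $\theta^*$ is still the unique maximizer of $L$ on $\Theta(m)$ by Id$(\Theta)$, while Var($\Theta$) keeps $F(\theta^*,m)$ invertible. When $m$ is misspecified these ingredients break down, which is precisely why that case is handled separately in the sequel.
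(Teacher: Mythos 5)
Your proof is correct and follows essentially the same route as the paper: a second-order Taylor expansion of $L_n$ between $\widehat\theta(m)$ and $\theta^*$, with the linear term bounded as $\sqrt n(\widehat\theta(m)-\theta^*)\cdot\frac{1}{\sqrt n}\partial_\theta L_n(\widehat\theta(m))=O_P(1)\,o_P(1)$ and the quadratic term bounded as $O_P(1)$ via the Hessian convergence and the $\sqrt n$-rate of the QMLE on the well-specified overfitted model $m$ (the paper cites its equation \eqref{eq:xx} for exactly this). Your explicit justification that $m^*\subset m$ implies $\theta^*\in\Theta(m)$, so the results of \cite{barW} transfer to the larger model, is the same implicit step the paper takes in Lemma \ref{lem:3}, merely spelled out.
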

\begin{proof}
Applying a second order Taylor expansion of $L_n$ around $\widehat{\theta}(m^*)$ for $n$ sufficiently large such that $\overline{\theta}(m) \in \Theta(m)$ which are between $\theta^*$ and $\widehat{\theta}(m^*)$, yields:
\begin{eqnarray*}
L_n(\widehat{\theta}(m))-L_n(\theta^*)& =&(\widehat{\theta}(m)-\theta^*) \frac{\partial L_n(\widehat{\theta}(m))}{\partial \theta}+\frac{1}{2}(\widehat{\theta}(m)-\theta^*)'\frac{\partial^2 L_n(\overline{\theta}(m))}{\partial \theta \partial \theta'}(\widehat{\theta}(m)-\theta^*)\\
& &\hspace{-2cm}= \sqrt{n}(\widehat{\theta}(m)-\theta^*)\frac{1}{\sqrt{n}}\frac{\partial L_n(\widehat{\theta}(m))}{\partial \theta}+\frac{1}{2}\sqrt{n}(\widehat{\theta}(m)-\theta^*)'\frac{1}{n}\frac{\partial^2 L_n(\overline{\theta}(m))}{\partial \theta \partial \theta'}\sqrt{n}(\widehat{\theta}(m)-\theta^*)\\
& &\hspace{-2cm}= \hspace{1.5cm} o_p(1) \hspace{3cm} +\hspace{1.5cm}O_P(1) \hspace{1.5cm}\\
& &\hspace{-2cm}=  O_P(1),
\end{eqnarray*}
by using equality \eqref{eq:xx}.
\end{proof}

\subsection{Proof of Theorem \ref{theo:1}}
As we point out in Subsection \ref{qmle_msc}, the proof is divided into two parts.
\begin{proof}
1. For $m\in {\cal M}$ such as $m*\subset m$ and $m\neq m^*$ (overfitting), then using with $\widehat{C}(m)=-2\widehat{L}_n\big(\widehat{\theta}(m)\big)+ |m| \,\kappa_n$ (see \eqref{eq:cri}), we have:
\begin{eqnarray*}
\P(\widehat{m}=m) & \le & \P\big( \widehat{C}(m) \le -2\widehat{L}_n\big(\theta^*\big)+ |m^*| \,\kappa_n\big)\\
& \le &\P\Big(-2\big(\widehat{L}_n(\widehat{\theta})-\widehat{L}_n(\theta^*)\big) \le \kappa_n(|m^*|-|m|)\Big)\\
& \le &\P\Big(\frac{1}{\kappa_n}\big(\widehat{L}_n(\theta^*)-\widehat{L}_n(\widehat{\theta})\big) \le \frac{(|m^*|-|m|)}{2}\Big)\\
& \limiten & 0 \;
\end{eqnarray*}
\textnormal{by virtue of Lemma}\; \ref{lem:3} and because $|m|-|m^*|\geq 1 $. \\
~\\
2. Let $m\in {\cal M}$ such as $m^* \not \subset m$. Then,
\begin{multline}
\hspace{-0.5cm} \widehat{L}_n(\widehat{\theta}(m^*))-\widehat{L}_n(\widehat{\theta}(m))=\big(\widehat{L}_n(\widehat{\theta}(m^*))-L_n(\widehat{\theta}(m^*))\big)-\big(\widehat{L}_n(\widehat{\theta}(m))-L_n(\widehat{\theta}(m))  \big) \\
+ \big(L_n(\widehat{\theta}(m^*))- L_n(\widehat{\theta}(m)) \big).
\label{eq:cr}
\end{multline}
It follows from Lemma \ref{lem:1} that the first and the second term of the right part of \eqref{eq:cr} are equal to $o_P(\kappa_n)$.
Moreover, the third term can be written as follows:
\[
L_n(\widehat{\theta}(m^*))-L_n(\widehat{\theta}(m)) =\big(L_n(\widehat{\theta}(m^*))-L_n(\theta^*)\big)+ \big(L_n(\theta^*)-L_n(\widehat{\theta}(m)) \big).
\]
From Lemma \ref{lem:5}, one deduces $L_n(\widehat{\theta}(m^*))-L_n(\theta^*)=O_P(1)$. In addition, in the sequel, we are going to show that
\begin{equation}
L_n(\theta^*)-L_n(\widehat{\theta}(m))=n \,\big ( A(m)+o_P(1) \big ), \quad\mbox{with $A(m)>0$}.
\label{eq:mes}
\end{equation}
For any $\theta \in \Theta(m)$, we have from Proposition \ref{prop:1}
\begin{align*}
L_n(\theta^*)-L_n(\theta)& =\big(L_n(\theta^*)-n\, L(\theta^*)\big)-\big(L_n(\theta)-n\,L(\theta)\big)+n\,\big(L(\theta^*))-L(\theta))\big)\\
&=o_P(n)+ n\, \big (L(\theta^*))-L(\theta) \big).
\end{align*}
Let us denote by $\mathcal{F}_t:=\sigma \big (X_{t-1},X_{t-2},\cdots \big )$. Using conditional expectation, we obtain
\begin{equation}\label{cond}
L(\theta^*)-L(\theta)=-\frac{1}{2} \, \E \Big[ \E \big [q_0(\theta)-q_0(\theta^*)~|~\mathcal{F}_0 \big ]\Big].
\end{equation}
But,
\begin{eqnarray*}
 \E \big [q_0(\theta)-q_0(\theta^*)~|~\mathcal{F}_0 \big ]& =&
\E\Big [\frac{(X_0 - f_{\theta}^0)^2}{H_{\theta}^0} + \log(H_{\theta}^0) -\frac{(X_0 - f_{\theta^*}^0)^2}{H_{\theta^*}^0} - \log(H_{\theta^*}^0) ~ |~\mathcal{F}_0 \Big ]\\
&=& \log \Big (\frac{H_{\theta}^0}{H_{\theta^*}^0}\Big )+\frac{\E\big [(X_0 - f_{\theta}^0)^2~|~\mathcal{F}_0\big ]}{H_{\theta}^0} -\frac{\E\big [(X_0 - f_{\theta^*}^0)^2~|~\mathcal{F}_0\big ]}{H_{\theta^*}^0}\\
&=&\log\Big (\frac{H_{\theta}^0}{H_{\theta^*}^0}\Big )-1+ \frac{\E\big [(X_0 -f_{\theta^*}^0+f_{\theta^*}^0- f_{\theta}^0)^2~ |~\mathcal{F}_0\big ]}{H_{\theta}^0} \\
&=&\frac{H_{\theta^*}^0}{H_{\theta}^0}- \log\Big (\frac{H_{\theta^*}^0}{H_{\theta}^0}\Big )-1 + \frac{ (f_{\theta^*}^0-f_{\theta}^0)^2}{H_{\theta}^0}
\end{eqnarray*}
As a consequence, from \eqref{cond},
\begin{eqnarray*}
A(m)&:=&2 \, \big (L(\theta^*)-L(\theta) \big ) \\
& =& \E \Big [ \frac{H_{\theta^*}^0}{H_{\theta}^0}- \log\Big (\frac{H_{\theta^*}^0}{H_{\theta}^0}\Big )-1 + \frac{ (f_{\theta^*}^0-f_{\theta}^0)^2}{H_{\theta}^0} \Big ]\\
& \ge & \E \Big [ \frac{H_{\theta^*}^0}{H_{\theta}^0}\Big ]- \log\Big (\E \Big [ \frac{H_{\theta^*}^0}{H_{\theta}^0}\Big ]\Big )-1 +\E \Big [  \frac{ (f_{\theta^*}^0-f_{\theta}^0)^2}{H_{\theta}^0} \Big ]\quad \mbox{by Jensen Inequality.}
\end{eqnarray*}
Since $ x-\log(x)- 1 > 0$ for any $x>0,~x\neq 1$ and $ x-\log(x)- 1= 0$ for $x=1$, we deduce that
\begin{itemize}
\item If $f_{\theta^*}^0 \ne f_{\theta}^0$ then $\E \Big [  \frac{ (f_{\theta^*}^0-f_{\theta}^0)^2}{H_{\theta}^0} \Big ]>0$ and $A(m)>0.$
\item Otherwise, if $f_{\theta^*}^0 = f_{\theta}^0$, then
\begin{equation*}
A(m) =\E \Big [ \frac{H_{\theta^*}^0}{H_{\theta}^0}- \log\Big (\frac{H_{\theta^*}^0}{H_{\theta}^0}\Big )-1  \Big ],
\end{equation*}
From Assumption ID$(\Theta)$, when $\theta^*\notin \Theta(m)$ and if $f_{\theta^*}^0=f_{\theta}^0$, we necessarily have $H_{\theta^*}^0\ne H_{\theta}^0$ so that $\frac{H_{\theta^*}^0}{H_{\theta}^0}\ne 1.$ Then $A(m)>0$.
\end{itemize}
Therefore  $A(m) >0$ for any $\theta \in \Theta(m)$ and particularly for $\theta=\widehat{\theta}(m)$ and  \eqref{eq:mes} holds.
Thus,  \eqref{eq:cr} yields to
\begin{align*}
\widehat{L}_n(\widehat{\theta}(m^*))-\widehat{L}_n(\widehat{\theta}(m))&=
o_P(\kappa_n)+O_P(1)+n\,A(m)+o_P(n)=O_P(1)+n\,A(m)+o_P(n).
\end{align*}
Finally, when $m\in {\cal M}$ such as $m^* \not \subset m$, we have

\begin{equation*}
\widehat{C}(m)-\widehat{C}(m^*)=2\, n\, A(m)+o_P(n)+O_P(1)+\kappa_n(|m|-|m^*|)\limiteproban +\infty
\end{equation*}

since $\kappa_n=o(n)$, therefore $\mathbb{P}\big (\widehat{C}(m) > \widehat{C}(m^*) \big ) \limiten 1$.\\
Thus we have proved the first and most difficult part of  Theorem (\ref{theo:1}). The next lines show the second part which is about the consistency of $\widehat{\theta}(\widehat{m})$.\\

\noindent  Given $\epsilon>0$, we have :\\
\begin{eqnarray*}
    \P\Big( \|\widehat \theta(\widehat m)-\theta^*\|_{i\in m^*} > \epsilon \Big)&=&\P\Big( \|\widehat \theta(\widehat m)-\theta^*\|_{i\in m^*} > \epsilon|\widehat{m}=m^* \Big)\, \P \big (\widehat m=m^* \big ) \\
    && \hspace{3cm}+ \P\Big( \|\widehat \theta(\widehat m)-\theta^*\|_{i\in m^*} > \epsilon|\widehat{m}\ne m^* \Big)\, \P \big (\widehat m \ne m^* \big ).
\end{eqnarray*}
From the strong consistency of the QMLE (see New version of Theorem 1 of \cite{barW}), the first term of the right hand side of the above equation is asymptotically zero and also the second one under the assumptions of the first part of Theorem \ref{theo:1} which gives $\P \big (\widehat{m}\neq m^*\big )  \limiten 0$.  \\
\end{proof}

\subsection{Proof of Theorem \ref{theo:2}}
\begin{proof}
 For $x=(x_i)_{1\leq i \leq d} \in \R^d$, denote $\displaystyle F_n(x)=\P \Big ( \bigcap _{1\leq i \leq d} \sqrt n \, \big ( \widehat \theta(\widehat m)-\theta^* \big )_i  \leq x_i \Big )$. \\
First, we have:
\begin{eqnarray*}
F_n(x)&=&\P \Big ( \bigcap _{1\leq i \leq d} \sqrt n \, \big ( \widehat \theta(\widehat m)-\theta^* \big )_i  \leq x_i ~\big | ~\widehat m=m^* \Big )\, \P \big (\widehat m=m^* \big ) \\
&& \hspace{3cm}+ \P \Big ( \bigcap _{1\leq i \leq d} \sqrt n \, \big ( \widehat \theta(\widehat m)-\theta^* \big )_i  \leq x_i ~\big | ~\widehat m \neq m^* \Big )\, \P \big (\widehat m \neq m^* \big ).
\end{eqnarray*}
Under the assumptions of Theorem \ref{theo:1}, $\P \big (\widehat{m}=m^*\big )  \limiten 1$ and $\P \big (\widehat{m}\neq m^*\big )  \limiten 0$. Therefore the second term in the right side of the previous equality asymptotically vanishes. For the first term, we can write,
\begin{multline*}
\P \Big ( \bigcap _{1\leq i \leq d} \sqrt n \, \big ( \widehat \theta(\widehat m)-\theta^* \big )_i  \leq x_i ~\big | ~\widehat m=m^* \Big ) \\ = \P \Big ( \Big \{  \bigcap _{i\in m^*} \sqrt n \, \big ( \widehat \theta( m^*)-\theta^* \big )_i  \leq x_i \Big \} \, \bigcap ~\Big \{ \bigcap _{i\notin m^*} \sqrt n \, \big ( \widehat \theta( m^*)-\theta^* \big )_i  \leq x_i \Big \} \Big ).
\end{multline*}
Since $\theta(m^*) \in \Theta(m^*)$, $\big (\big (\widehat \theta(m^*)\big )_i\big )_{i \notin m^*} =\big (\theta^*_i \big )_{i \notin m^*}=0$, for $(x_i)_{i \notin m^*}$ a family of non negative real numbers we have:
\begin{multline*}
 \P \Big ( \Big \{  \bigcap _{i\in m^*} \sqrt n \, \big ( \widehat \theta( m^*)-\theta^* \big )_i  \leq x_i \Big \} \, \bigcap ~\Big \{ \bigcap _{i\notin m^*} \sqrt n \, \big ( \widehat \theta( m^*)-\theta^* \big )_i  \leq x_i \Big \} \Big ) \\
\hspace{-3cm}= \P \Big (  \bigcap _{i\in m^*} \sqrt n \, \big ( \widehat \theta( m^*)-\theta^* \big )_i  \leq x_i \Big ) \\
\limiten \P \Big ( \big (F(\theta^*,m^*)^{-1} G(\theta^*,m^*)
F(\theta^*,m^*)^{-1}\big )^{-1/2} Z \leq (x_i)_{i \in m^*}\Big ),
\end{multline*}
with $Z$ a standard Gaussian random vector in $\R^{|m^*|}$ from the central limit theorem \eqref{tlcqmle}, and this achieves the proof of \ref{eq:con2} of Theorem \ref{theo:2}.
\end{proof}

\subsection{Proof of Theorem \ref{theo:3}}
Consider the following notation: for $\theta \in \Theta$ and $m\in {\cal M}$, denote the residuals and quasi-residuals by:
\begin{equation*}
\left \{ \begin{array}{lclclcl}
e_t(\theta)&:=&\displaystyle \big (M_\theta^t\big )^{-1} \big (X_t-f_{\theta}^t\big ) & \mbox{and} & \widehat e_t(\theta)&:=&\displaystyle \big (\widehat M_\theta^t\big )^{-1} \big ( X_t-\widehat f_{\theta}^t \big )\\
e_t(m)&:=&\displaystyle \big (M_{\widehat \theta(m)}^t\big )^{-1} \big ( X_t-f_{\widehat \theta(m)}^t \big )& \mbox{and} & \displaystyle \widehat e_t(m)&:=&\displaystyle \big (M_{\widehat \theta(m)}^t\big )^{-1} \big ( X_t-\widehat f_{\widehat \theta(m)}^t \big )
\end{array}
\right . .
\end{equation*}
For $k\in \{0,1,\ldots,n-1\}$,  $\theta \in \Theta$ and $m\in {\cal M}$, define also the adjusted lag-$k$  covariograms and correlograms of the squared (standardized) residual by:
\begin{equation*}\label{eq:ref1}
\left \{ \begin{array}{ccccccc}
\gamma_k(\theta)&\hspace{-3mm}:=&\hspace{-3mm}\displaystyle \frac 1 n \, \sum_{t=1}^{n-k} \big (e_t^2(\theta)-1 \big )\big ( e_{t+k}^2 (\theta) -1 \big ) &\hspace{-1mm} \mbox{and} & \hspace{-1mm}\widehat \gamma_k(\theta)&\hspace{-3mm}:=&\hspace{-3mm}\displaystyle \frac 1 n \, \sum_{t=1}^{n-k} \big (\widehat e_t^2(\theta) -1 \big )\big (\widehat e_{t+k}^2(\theta) -1 \big )  \\
\gamma_k(m)&\hspace{-3mm}:=&\hspace{-3mm}\displaystyle \frac 1 n \, \sum_{t=1}^{n-k} \big ( e_t^2(m) -1 \big )\big (e_{t+k}^2(m) -1 \big ) &\hspace{-1mm} \mbox{and} & \hspace{-1mm}\widehat \gamma_k(m)&\hspace{-3mm}:=&\hspace{-3mm}\displaystyle \frac 1 n \, \sum_{t=1}^{n-k} \big (\widehat e_t^2(m)-1 \big )\big (\widehat e_{t+k}^2(m) -1 \big )
\end{array}
\right .
\end{equation*}
and $
\rho_k(\theta):=\displaystyle \frac {\gamma_k(\theta)}{\gamma_0(\theta)}, ~ \widehat \rho_k(\theta):=\displaystyle \frac {\widehat \gamma_k(\theta)}{\widehat \gamma_0(\theta)}, ~
\rho_k(m):=\displaystyle \frac {\gamma_k(m)}{\gamma_0(m)}$ and $\widehat \rho_k(m):=\displaystyle \frac {\widehat \gamma_k(m)}{\widehat \gamma_0(m)}$.

 \medskip

\noindent Finally, for $K$ a positive integer, denote the vector of adjusted correlogram:
\begin{equation*}
\widehat{\rho}(\theta):=\big ( \widehat\rho_1(\theta), \ldots,\widehat\rho_{K}(\theta) \big )' \quad\mbox{and}\quad \widehat{\rho}(m):=\big ( \widehat\rho_1(m), \ldots,\widehat\rho_{K}(m) \big )'.
\end{equation*}
\begin{proof}
\hspace{0.5cm} (1) This proof is divided into two parts. In (i) we prove a result that ensures that the asymptotic distributions of the vectors $\widehat{\rho}(\theta)$ and $\rho(\theta)$ are the same. In (ii) we show that the large sample distribution of $\sqrt{n}\rho(m^*)$ is normal with a covariance matrix $V(\theta^*,m^*)$ . Those two conditions do lead well to the asymptotic normality \eqref{eq:por}. \\

\hspace{0.5cm} (i) In this part, we first show that for any $k\in \N$,
\begin{equation}
\sqrt{n}\,  \big \| \widehat{\gamma}_k (\theta)-\gamma_k (\theta) \big \| _{\Theta} \limitesur 0.
\label{eq:dol}
\end{equation}
We have:
\begin{eqnarray*}
\sqrt{n} \big (\widehat{\gamma}_k (\theta)-\gamma_k (\theta) \big )&=& \displaystyle \frac 1 {\sqrt{n}} \,\sum_{t=k+1}^{n} \big(\widehat{e}_t^2(\theta)-1\big)\big(\widehat{e}_{t-k}^2(\theta)-1\big)-\frac 1 {\sqrt{n}} \,\sum_{t=k+1}^{n} \big( e_t^2(\theta)-1\big)\big( e_{t-k}^2(\theta)-1\big)\\
&=& \frac 1 {\sqrt{n}} \,\sum_{t=k+1}^{n} \big (\widehat{e}_t^2(\theta)\widehat{e}_{t-k}^2(\theta)-e_t^2(\theta) e_{t-k}^2(\theta)\big )+\frac 1 {\sqrt{n}} \,\sum_{t=k+1}^{n} \big (\widehat{e}_t^2(\theta)-e_t^2(\theta) \big )\\
&& \hspace{6cm} +\frac 1 {\sqrt{n}} \,\sum_{t=k+1}^{n} \big (e_{t-k}^2(\theta)  -\widehat{e}_{t-k}^2(\theta)\big)\\
&=:& I_1 + I_2 + I_3.
\end{eqnarray*}
Now, we show that $\|I_1\|_\Theta \limiteasn 0$. We can rewrite $I_1$ as follows
\begin{align*}
I_1& =\frac 1 {\sqrt{n}} \,\sum_{t=k+1}^{n} \widehat{e}_{t-k}^2(\theta) \big (\widehat{e}_{t}^2(\theta)-e_{t}^2(\theta) \big )+\frac 1 {\sqrt{n}} \,\sum_{t=k+1}^{n}e_{t}^2(\theta)\big (\widehat{e}_{t-k}^2(\theta)-e_{t-k}^2(\theta) \big )\\
&= \frac 1 {\sqrt{n}} \,\sum_{t=k+1}^{n} \big(\widehat{e}_{t-k}^2(\theta)-e_{t-k}^2(\theta) \big)\big (\widehat{e}_{t}^2(\theta)-e_{t}^2(\theta)\big)+\frac 1 {\sqrt{n}} \, \sum_{t=k+1}^{n}e_{t-k}^2(\theta)\big (\widehat{e}_{t}^2(\theta)-e_{t}^2(\theta) \big) \\
& \hspace{8cm}+\frac 1 {\sqrt{n}} \,\sum_{t=k+1}^{n} e_t^2(\theta)\big (\widehat{e}_{t-k}^2(\theta)-e_{t-k}^2(\theta) \big)\\
&:=I_1^1 +I_1^2+ I_1^3.
\end{align*}
Let us show that $\|I_1^1\|_\Theta \limiteasn 0$ in our two frameworks. \\
a/ If $X\subset AC(M_{\theta},f_{\theta})$, by  Hölder's inequality, it follows from \eqref{e:fond} that,
\begin{eqnarray*}
\E\Big[\Big \|\big (\widehat{e}_{t-k}^2(\theta)-e_{t-k}^2(\theta)\big )\big (\widehat{e}_{t}^2(\theta)-e_{t}^2(\theta)\big ) \Big \|_{\Theta}^{1/2}\Big]
& \le &\Big(\E \big [\big \|\widehat{e}_{t}^2(\theta)-e_{t}^2(\theta)\big \|_{\Theta} \big]\times \E \big [ \big \|\widehat{e}_{t-k}^2(\theta)-e_{t-k}^2(\theta)\big \|_{\Theta} \big ]\Big)^{1/2}.
\end{eqnarray*}
But we have
\[
\big \|\widehat{e}_{t}^2(\theta)-e_{t}^2(\theta)\big \|_{\Theta}  \leq \frac 1 {\underline h} \,\big ( 2|X_t|+\|\widehat f_{\theta}^t \|_\Theta+\| f_{\theta}^t \|_\Theta \big ) \big \|\widehat f_{\theta}^t-f_{\theta}^t \|_\Theta+\frac 4 {\underline h^{3/2}} \,\big ( |X_t|^2+\| f_{\theta}^t \|^2_\Theta \big ) \big \|\widehat M_{\theta}^t-M_{\theta}^t \|_\Theta .
\]
Therefore,
\begin{eqnarray*}
\E \big [ \big \|\widehat{e}_{t}^2(\theta)-e_{t}^2(\theta)\big \|_{\Theta} \big ] & \leq& C \,\Big ( \E \big [ \big (|X_t|^2+\|\widehat f_{\theta}^t \|^2_\Theta+\| f_{\theta}^t \|^2_\Theta \big )\big ]\,\times \,  \E \big [ \big \|\widehat f_{\theta}^t-f_{\theta}^t \|^2_\Theta \big ] \Big )^{1/2} \\
&& \hspace{2cm}+ C \,\Big ( \E \big [ \big ( |X_t|^4+\| f_{\theta}^t \|^2_\Theta \big )\big ] \, \times \, \E \big [ \big \|\widehat M_{\theta}^t-M_{\theta}^t \|^2_\Theta \big ] \Big )^{1/2} \\
& \leq &  C \,\Big (  \E \Big [ \Big | \sum_{j \ge t} \alpha_j (f_\theta,\Theta)X_{t-j}\big |^2 \Big ] \Big )^{1/2}+ C \,\Big (\E \Big [ \Big | \sum_{j \ge t} \alpha_j (M_\theta,\Theta)X_{t-j}\big |^2 \Big ]  \Big )^{1/2} \\
& \leq &  C \,\sum_{j \ge t} \alpha_j (f_\theta,\Theta)+\alpha_j (M_\theta,\Theta),
\end{eqnarray*}
using $\E \big [|X_t|^4+\| f_{\theta}^t \|^2_\Theta+\| \widehat f_{\theta}^t \|^2_\Theta\big ] <\infty$ and Cauchy-Schwarz Inequality. Hence,
\begin{eqnarray*}
\E\Big[\Big \|\big (\widehat{e}_{t-k}^2(\theta)-e_{t-k}^2(\theta)\big )\big (\widehat{e}_{t}^2(\theta)-e_{t}^2(\theta)\big ) \Big \|_{\Theta}^{1/2}\Big]  &\leq &  C \,\sum_{j \ge t-k} \alpha_j (f_\theta,\Theta)+\alpha_j (M_\theta,\Theta).
\end{eqnarray*}
Therefore, from \cite{kounias}, $\| I^1_1\| _{\Theta} \limiteasn 0$ when
\begin{equation}\label{koun2}
\sum_{t=1}^\infty t ^{-1/4} \sum_{j \ge t} \alpha_j (f_\theta,\Theta)+\alpha_j (M_\theta,\Theta) <\infty.
\end{equation}
b/ if $X \subset \widetilde {{\mathcal AC}}(\widetilde H_\theta)$, same computations imply $\| I^1_1\| _{\Theta}\limiteasn 0$ when
\begin{equation}\label{koun3}
\sum_{t=1}^\infty t ^{-1/4} \sum_{j \ge t} \alpha_j (\widetilde H_\theta,\Theta) <\infty.
\end{equation}
Since  $\mathbb{E}\big [\|e_t^2(\theta)\|_\Theta \big ]\le 2 \, \underline{h}^{-1}\mathbb{E}\big [X_t^2+\|f_{\theta}^t\|_\Theta^2\big ]<\infty$ and similarly $\mathbb{E}\big [\|\widehat e_t^2(\theta)\|_\Theta \big ]<\infty$, we deduce from the same inequalities as in the first case of $I_1^1$ that $\| I^2_1\| _{\Theta}\limiteasn 0$ and $\| I^3_1\| _{\Theta}\limiteasn 0$ when
\begin{equation}\label{koun4}
\sum_{t=1}^\infty t ^{-1/4} \Big (\sum_{j \ge t} \alpha_j (f_\theta,\Theta)+\alpha_j (M_\theta,\Theta) + \alpha_j (\widetilde H_\theta,\Theta) \Big )^{1/2}<\infty,
\end{equation}
which is also the condition for insuring that $\|I_2\|_\Theta \limiteasn 0$ and $\|I_3\|_\Theta\limiteasn 0$. This ends the proof of \eqref{eq:dol}.\\
Finally, since $\widehat{\rho}_k(\theta)=\widehat{\gamma}_k(\theta) /\widehat{\gamma}_0(\theta)$ and $\rho_k(\theta)=\gamma_k(\theta)/\gamma_0(\theta)$, with $\gamma_0(\theta)>0$, we deduce under condition \eqref{koun4} that
\begin{equation}\label{corelas}
\sqrt{n}\big \| \widehat{\rho}_k (\theta)-\rho_k (\theta) \big \|_\Theta \limiteasn 0\quad\mbox{for any $k\geq 1$}.
\end{equation}
This also implies
\begin{equation}\label{corelas2}
\sqrt{n}\big | \widehat{\rho}_k (m^*)-\rho_k (m^*) \big | \limiteasn 0\quad\mbox{for any $k\geq 1$}.
\end{equation}
\hspace{0.5cm} (ii) The proof of this result has already been done in \cite{li2} but in a Gaussian framework. We recall here the main lines while avoiding the Gaussian assumption. The first step is to use a Taylor expansion of the function $\gamma$. Hence, we have for each $k=1,\ldots,K$,
\begin{equation}\label{taylor1}
\sqrt n \, \gamma_k(m^*)=\sqrt n \, \gamma_k(\widehat{\theta}(m^*))=\sqrt n \, \gamma_k(\theta^*)+ \partial_\theta \gamma_k(\overline \theta^{(k)}) \sqrt n \,\big ((\widehat{\theta}(m^*))_i-\theta_i^*\big )_{i\in m^*} ,
\end{equation}
\mbox{where} $\partial_\theta \gamma_k={}^t\big (\partial \gamma_k/ \partial \theta_i\big )_{i\in m^*}$, and $\overline \theta^{(k)}$ is in the ball of centre $\theta^*$ and radius $\|(\widehat{\theta}(m^*)-\theta^*)_{i\in m^*}\|$.  We also have
\begin{multline} \label{partial}
\partial_\theta \gamma_k(\theta)=- \frac 2 n \Big (  \sum_{t=k+1}^{n} e_t^2(\theta)\, \big(e_{t-k}^2(\theta) -1\big)\frac{\partial_\theta M_{\theta}^t}{M_{\theta}^t}+e_t(\theta) \big(e_{t-k}^2(\theta) -1\big)\, \frac{\partial_\theta f_{\theta}^t}{M_{\theta}^t}\\
+ e_{t-k}(\theta) \,\big( e_t^2(\theta)-1\big) \frac{\partial_\theta f_{\theta}^{t-k}}{M_{\theta}^{t-k}}+ e_{t-k}^2(\theta)\, \big(e_{t}^2(\theta) -1\big)\frac{\partial_\theta M_{\theta}^{t-k}}{M_{\theta}^{t-k}}\Big ).
\end{multline}
We have $\E \big [ e_{t-k}(\theta^*) \,\big( e_t^2(\theta^*)-1\big) \frac{\partial f_{\theta^*}^{t-k}}{M_{\theta^*}^{t-k}}~| ~\sigma\big ((\xi_s)_{s\leq t-k} \big )\big ]=e_{t-k}(\theta^*)\frac{\partial f_{\theta^*}^{t-k}}{M_{\theta^*}^{t-k}} \E \big [e_t^2(\theta^*)-1\big ]=0$ since we have  assumed $\E[\xi_0^2]=1$.
Moreover, $\E \big [e_t(\theta^*) \frac{\partial f_{\theta^*}^t}{M_{\theta^*}^t}\big ]=\E \big [\xi_t \, \frac{\partial f_{\theta^*}^t}{M_{\theta^*}^t}\big ]=0$ and this implies $\E \big [ e_t(\theta^*) \big(e_{t-k}^2(\theta^*) -1\big)\, \frac{\partial f_{\theta^*}^t}{M_{\theta^*}^t}\big ]=0$. As a consequence, the expectation of the three  last terms of \eqref{partial} vanishes for $\theta=\theta^*$. By using the Ergodic Theorem, we finally obtained:
\[
\partial_\theta \gamma_k(\theta^*) \limiteasn -2 \, \E\Big [ e_k^2(\theta^*)\, \big(e_{0}^2(\theta^*) -1\big)\frac{\partial_\theta M_{\theta^*}^k}{M_{\theta^*}^k}\Big ] =-2 \, \E\Big [ \big(\xi_{0}^2 -1\big)\, \partial_\theta \log \big (M_{\theta^*}^k\big )\Big ].
\]
Moreover, since $\partial^2_{\theta^2}f_\theta$ and $\partial^2_{\theta^2}M_\theta$ exist, and since $\widehat \theta(m^*) \limiteasn \theta^*$, we deduce that the same almost sure convergence occurs for $\partial_\theta \gamma_k(\overline \theta^{(k)})$. Then, we finally obtain
\begin{equation}\label{almost2}
\big (\partial_\theta \gamma_k(\overline \theta^{(k)}) \big )_{1\leq k\leq K} \limiteasn J_K(m^*)=-2 \, \Big ( \E\Big [ \big(\xi_{0}^2 -1\big)\,  \frac {\partial}{\partial \theta_j} \log \big (M_{\theta^*}^i\big )\Big ] \Big )_{1\leq i\leq K, \, j\in m^*}.
\end{equation}
We also established a central limit theorem for $\widehat{\theta}(m^*)$ in \eqref{tlcqmle}, and this implies
\begin{multline}\label{loigamma}
\big (\partial_\theta \gamma_k(\overline \theta^{(k)}) \big )_{1\leq k\leq K}  \sqrt n \,\big ((\widehat{\theta}(m^*))_i-\theta_i^*\big )_{i\in m^*}\\
\limiteloin {\cal
N}_{K}\Big (0 \ , \ J_K(m^*) \, F(\theta^*,m^*)^{-1} G(\theta^*,m^*)
F(\theta^*,m^*)^{-1} J'_K(m^*)\Big ).
\end{multline}
On the other hand, when $\theta=\theta^*$, $e^2_t(\theta^*)=\xi^2_t$ for any $t\in \Z$ and since $\E[\xi_0^2]=1$, we deduce that $\big (e^2_t(\theta^*)-1 \big)_t$ is a sequence of centred iid random variables with variance $\mu_4-1$ with $\mu_4=\E[\xi_0^4]$. In such as case, the asymptotic behavior of the covariograms is well known and we deduce:
\begin{equation}\label{covargamma}
\sqrt n \, \big (\gamma_k(\theta^*) \big )_{1\leq k\le K} \limiteloin {\cal N}_K \big (0 \, , \, (\mu_4-1)^2\, I_K \big ),
\end{equation}
with $I_k$ the $(K\times K)$ identity matrix.

 \medskip

\noindent We would like to use \eqref{taylor1} for obtaining the asymptotic behavior of $\gamma(m^*)$. In \eqref{loigamma} and \eqref{covargamma}, we obtained the asymptotic normality of each of the two terms composing $\gamma(m^*)$. Now we need to study the joint asymptotic behavior of $\sqrt{n} \, \gamma(\theta^*)$ and $\sqrt n \,\big ((\widehat{\theta}(m^*))_i-\theta_i^*\big )_{i\in m^*}$.

\medskip

\noindent Using the proof of the asymptotic normality of the QMLE (see for instance \cite{barW}), a Taylor expansion of log-likelihood for large $n$ leads to

\[
\big ((\widehat{\theta}(m^*))_i-\theta_i^*\big )_{i\in m^*}\approx -\big(F(\theta^*,m^*)\big)^{-1} \frac 1 n \, \frac{\partial}{\partial \theta}L_n(\theta^*).
\]
Therefore, the asymptotic cross expectation between $\big (\partial_\theta \gamma_k(\overline \theta^{(k)}) \big )_{ k} \sqrt n \,\big ((\widehat{\theta}(m^*))_i-\theta_i^*\big )_{i\in m^*}$ and $\sqrt{n} \, \gamma(\theta^*)$ is equal to:
\begin{equation}
-\,J_K(m^*) \, F(\theta^*,m^*)^{-1}\E\Big [\frac{\partial}{\partial \theta}L_n(\theta^*)\, \gamma(\theta^*)' \Big ] .
\end{equation}
From \eqref{eq:eq1}, a direct differentiation of $L_n$ provides
\begin{equation*}
\frac{\partial }{\partial \theta}L_n(\theta^*)= \sum_{t=1}^{n} \big(e_t^2(\theta^*)-1 \big) \, \frac{\partial}{\partial \theta} \log \big (M_{\theta^*}^t\big )  + \sum_{t=1}^{n} e_t(\theta^*)\, \frac{\partial }{\partial \theta}f_{\theta^*}^t
\end{equation*}
so that,
\begin{eqnarray*}
\E\Big [\frac{\partial}{\partial \theta}L_n(\theta^*)\, \gamma_k(\theta^*)\Big ]&=&
\frac 1 n \, \E \Big[ \sum_{i=1}^{n} \big(e_i^2(\theta^*)-1 \big) \, \frac{\partial}{\partial \theta} \log \big (M_{\theta^*}^i\big )\sum_{j=k+1}^{n} \big( e_j^2(\theta^*)-1\big)\, \big( e_{j-k}^2(\theta^*)-1\big)\Big]\\
&& \hspace{2cm}+\frac 1 n \, \E \Big[\sum_{i=1}^{n} e_i(\theta^*)\, \frac{\partial }{\partial \theta}f_{\theta^*}^i \sum_{j=k+1}^{n} \big( e_j^2(\theta^*)-1\big)\, \big( e_{j-k}^2(\theta^*)-1\big)\Big] \\
&=&
\frac 1 n \, \sum_{i=1}^{n} \sum_{j=k+1}^{n}\E \Big[  \big(\xi_i^2-1 \big) \, \big( \xi_j^2-1\big)\, \big( \xi_{j-k}^2-1\big)\, \frac{\partial}{\partial \theta} \log \big (M_{\theta^*}^i\big ) \Big]\\
&& \hspace{2cm}+\frac 1 n \,\sum_{i=1}^{n}\sum_{j=k+1}^{n}  \E \Big[ \xi_i\,\big( \xi_j^2-1\big)\, \big( \xi_{j-k}^2-1\big)\,  \frac{\partial }{\partial \theta}f_{\theta^*}^i \Big].
\end{eqnarray*}
Using conditional expectations, we have $\E \Big[  \big(\xi_i^2-1 \big) \, \big( \xi_j^2-1\big)\, \big( \xi_{j-k}^2-1\big)\, \frac{\partial}{\partial \theta} \log \big (M_{\theta^*}^i\big ) \Big]=0$ for $i\neq j$ since $k\geq 1$. Moreover, for $i=j$, we obtain:
\[
\E \Big[  \big(\xi_i^2-1 \big) \, \big( \xi_j^2-1\big)\, \big( \xi_{j-k}^2-1\big)\, \frac{\partial}{\partial \theta} \log \big (M_{\theta^*}^i\big ) \Big]=(\mu_4-1) \, \E \Big [  \big( \xi_{i-k}^2-1\big)\, \frac{\partial}{\partial \theta} \log \big (M_{\theta^*}^i\big ) \Big],
\]
which is the row $k$ of matrix $-\frac{(\mu_4-1)}{2} \, J_K(m^*)$. Similarly, and using the assumption  $\E \big [\xi^3_0]=0$, we obtain $ \E \Big[ \xi_i\,\big( \xi_j^2-1\big)\, \big( \xi_{j-k}^2-1\big)\,  \frac{\partial }{\partial \theta}f_{\theta^*}^i \Big]=0$ for any $i,j$ and $k$. As a consequence,
\begin{multline*}
\cov \big ( \sqrt{n} \, \gamma(\theta^*)\, , \,\big (\partial_\theta \gamma_k(\overline \theta^{(k)}) \big )_{ k} \sqrt n \,\big ((\widehat{\theta}(m^*))_i-\theta_i^*\big )_{i\in m^*}\big ) \\
\limiten \frac 1 2 \, (\mu_4-1) \,J_K(m^*) \, F(\theta^*,m^*)^{-1}\, J'_K(m^*).
\end{multline*}
Finally, we deduce the asymptotic covariance matrix of $\sqrt{n}\, \gamma(m^*)$, which is
\begin{multline*}
(\mu_4-1)^2 \, I_{K}+J_K(m^*) \, F(\theta^*,m^*)^{-1} G(\theta^*,m^*)
F(\theta^*,m^*)^{-1} J'_K(m^*) \\+  (\mu_4-1) \,J_K(m^*) \, F(\theta^*,m^*)^{-1}\, J'_K(m^*).
\end{multline*}
Moreover the vector $\gamma(m^*)$ is normal distributed from Lemma 3.3 of \cite{Ling1997}.\\
\noindent Thus, using Slutsky Lemma and with $\gamma_0(m^*) \limiteasn \mu_4-1$, and with $\rho_k(m^*)=\gamma_k(m^*)/\gamma_0(m^*) $, the limit theorem \eqref{eq:por} holds with
\begin{multline}\label{VV}
V(\theta^*, m^*):=I_{K}+(\mu_4-1)^{-2} \, J_K(m^*) \, F(\theta^*,m^*)^{-1} G(\theta^*,m^*)
F(\theta^*,m^*)^{-1} J'_K(m^*) \\+  (\mu_4-1)^{-1} \, J_K(m^*) \, F(\theta^*,m^*)^{-1}\, J'_K(m^*).
\end{multline}
The proof is achieved after using the limit theorem \eqref{corelas2}. \\
~\\

(2) (\ref{eq:chi}) follows directly from (\ref{eq:por}).
\\

 (3) We follow a same reasoning like in the proof of Theorem \ref{theo:2}.
For $x=(x_k)_{1\leq k \leq K} \in \R^K$, denote by $\displaystyle F_n(x)=\P \Big ( \bigcap _{1\leq k \leq K} \sqrt n \, \big ( \widehat \rho(\widehat m) \big )_k  \leq x_k \Big )$ the distribution function of $\sqrt n \widehat \rho(\widehat m).$ \\
Applying the Total Probability Rule and by virtue of Theorem \ref{theo:1}, we obtain:
\[F_n(x)= \P \Big ( \bigcap _{1\leq k \leq K} \sqrt n \, \big ( \widehat \rho(m^*) \big )_k  \leq x_k \Big ). \]
Therefore, the vectors $\sqrt n \widehat \rho(\widehat m) $ and $\sqrt n \widehat \rho(m^*) $ have exactly the same distribution.
\end{proof}

\bibliography{biblio5}

\end{document}